\documentclass{amsart}
\usepackage{tikz}
\usepackage{subcaption}
\usetikzlibrary{arrows.meta}
\usepackage{graphicx} 
\usepackage{amsfonts, amsmath,amssymb,amsthm}
\usepackage{thmtools}
\usepackage{xcolor}
\usepackage{soul}
\usepackage[shortlabels]{enumitem}
\usepackage[hypertexnames=false]{hyperref}
\usepackage{cleveref}
\AddToHook{cmd/appendix/before}{\crefalias{section}{appendix}}

\crefname{enumi}{}{}

\usepackage{tikz}
\usepackage{graphicx}
\usepackage[normalem]{ulem}
\usepackage{fontawesome5}
\usepackage{mathrsfs}
\usepackage{longtable}
\usepackage{array}
\usepackage{booktabs}
\usepackage{multirow}

\crefformat{enumi}{(\mathrm{#2#1#3})}
\Crefformat{enumi}{(\mathrm{#2#1#3})}
\crefname{enumi}{}{ }

\usepackage[margin=1 in]{geometry}

\newcommand{\rk}{\mathrm{rk}}

\newcommand{\GHV}{the GHV lower bound}

\newcommand{\KK}{\mathbb{K}}

\newcommand{\RR}{\mathbb{R}}

\newcommand{\ZZ}{\mathbb{Z}}

\newcommand{\cB}{\mathcal{B}}
\newcommand{\C}{{\mathcal{C}}}

\newcommand{\cI}{\mathcal{I}}

\newcommand{\cL}{\mathcal{L}}

\newcommand{\si}{\mathrm{si}}
\newcommand{\cl}{\mathrm{cl}}

\newcommand{\cu}{\mathrm{c}}
\newcommand{\g}{\mathrm{g}}

\newcommand{\NP}{\mathrm{NP}}
\newcommand{\SP}{\mathrm{SP}}

\theoremstyle{plain}
\newtheorem{Theorem}{Theorem}[section]
\newtheorem{Lemma}[Theorem]{Lemma}
\newtheorem{Proposition}[Theorem]{Proposition}
\newtheorem{Observation}[Theorem]{Observation}
\newtheorem{Corollary}[Theorem]{Corollary}

\theoremstyle{definition}
\newtheorem{Definition}[Theorem]{Definition}
\newtheorem{Example}[Theorem]{Example}
\newtheorem{Question}[Theorem]{Question}

\newtheorem{Remark}[Theorem]{Remark}

\providecommand{\customgenericname}{}
\newtheorem{innercustomgeneric}{\customgenericname}
\providecommand{\customgenericname}{}
\newcommand{\newcustomtheorem}[2]{%
  \newenvironment{#1}[1]
  {%
   \renewcommand\customgenericname{#2}%
   \renewcommand\theinnercustomgeneric{##1}%
   \innercustomgeneric
  }
  {\endinnercustomgeneric}
}
\def\urltilda{\kern -.15em\lower .7ex\hbox{\~{}}\kern .04em} 
\newcustomtheorem{customthm}{Theorem}
\newcustomtheorem{acknowledgement}{Acknowledgement}

\newcommand{\cir}{\mathcal{C}}

\begin{document}

\title{Asymptotic Resurgence of Facet ideals of Graphic Matroids}

\author{Michael DiPasquale}
\address{Department of Mathematical Sciences \\
New Mexico State University\\
P.O. Box 30001 \\
Department 3MB \\
Las Cruces, NM 88003}
\email{midipasq@nmsu.edu}
\urladdr{\href{https://midipasq.github.io}{https://midipasq.github.io}}

\author{Louiza Fouli}
\address{Department of Mathematical Sciences \\
New Mexico State University\\
P.O. Box 30001 \\
Department 3MB \\
Las Cruces, NM 88003}
\email{lfouli@nmsu.edu}
\urladdr{\href{https://sites.google.com/view/louiza-fouli/home}{\tt https://sites.google.com/view/louiza-fouli}}

\author{Arvind Kumar}
\address{Department of Mathematical Sciences \\
New Mexico State University\\
P.O. Box 30001 \\
Department 3MB \\
Las Cruces, NM 88003}
\email{arvkumar@nmsu.edu}
\urladdr{\href{https://sites.google.com/view/arvkumar/home}{https://sites.google.com/view/arvkumar/home}}

\begin{abstract}
Our main results are an upper bound on the asymptotic resurgence of the facet ideal of a graphic matroid in terms of the number of vertices and a lower bound in terms of the circumference.  These bounds coincide for Hamiltonian graphs, which form the majority of graphs on $n$ vertices as $n$ tends to infinity.  For simple $2$-connected graphs on up to nine vertices, we compute in Sage that the asymptotic resurgence of the facet ideal of a non-Hamiltonian graphic matroid is given either by our upper or lower bound.
\end{abstract}

\subjclass[2020] { 
13A15, 
13F55, 
05E40, 
05B35, 
}

\keywords{symbolic powers, asymptotic resurgence, Stanley-Reisner ideals, facet ideals,  matroids, graphic matroids.}

\maketitle

\section{Introduction}

If $I$ is a radical ideal in a polynomial ring $S=\KK[x_1,\ldots,x_n]$ over a field $\KK$ of characteristic $0$, the $s$-th symbolic power of $I$ is defined by
$
I^{(s)}=\bigcap\limits_{P\in \mbox{Min}(I)} (I^sS_P\cap S),
$
where $\mbox{Min}(I)$ is the set of minimal primes of $I$.  According to the Zariski-Nagata theorem, $I^{(s)}$ consists of those polynomials that vanish to order $s$ along the variety defined by $I$.  A great deal of literature is devoted to studying symbolic powers and their relationship to ordinary powers; see the survey~\cite{DDGHN2018}.

To aid in the comparison of ordinary and symbolic powers of an ideal $I$, Bocci-Harbourne~\cite{BH10} and Guardo-Harbourne-Van Tuyl~\cite{GHV13} introduced the numerical invariants known as the \textit{resurgence} $\rho(I)$ and \textit{asymptotic resurgence} $\widehat{\rho}(I)$ defined by
\[
\rho(I)=\sup\left\lbrace\frac{s}{r}~:~ I^{(s)}\not\subset I^r\right\rbrace \quad \mbox{and}\quad 
\widehat{\rho}(I)=\sup\left\lbrace\frac{s}{r}~:~ I^{st}\not\subset I^{rt} \mbox{ for all } t\gg 0\right\rbrace,
\]
where $s,r,t$ are positive integers. It is clear from the definitions that $\widehat{\rho}(I)\le \rho(I)$, while the celebrated uniform containment theorems of Ein--Lazarsfeld--Smith~\cite{ELS01}, Hochster--Huneke~\cite{HH02}, and Ma--Schwede~\cite{MS17} imply the general upper bound that $\rho(I)\le h$, where $h$ denotes the largest codimension among the associated primes of $I$.  Computing these invariants is notoriously difficult, and explicit formulas are currently known only for a relatively small number of families of ideals, many of which arise from constructions which are combinatorial~\cite{MG18,DFMS19,DiPasquale-Drabkin-2021,JKM22,DFK26} or geometric~\cite{BH10,BH2010,DHNSST2015,GHMN17,BDHHSS2019, KM2026}.

The present paper builds upon our work in \cite{DFK26}, where we initiated a systematic study of the asymptotic resurgence of \emph{facet ideals of matroids}. In \cite{DFK26}, we proved that the asymptotic resurgence of the facet ideal of a matroid
can be expressed in terms of the \emph{Waldschmidt constant} (see~\Cref{sec:Background}) of facet ideals of contractions of the matroid.  We used this contraction formula to derive bounds on the asymptotic resurgence of facet ideals of matroids and to compute the asymptotic resurgence exactly for some families of matroids.

This paper builds upon the framework of \cite{DFK26} in two directions.  First, we develop structural reduction theorems in \Cref{sec:reduction-techniques} that simplify the computation of the asymptotic resurgence.  Namely, we prove that simplification preserves the asymptotic resurgence of the facet ideal of a matroid (\Cref{thm:simplification}) and that the asymptotic resurgence of the facet ideal of a matroid is the maximum of the asymptotic resurgences of the facet ideals of its connected components (\Cref{thm:direct-sum}).  
We also establish additional bounds in \Cref{sec:walds-upper-bounds}, some of which are incomparable with those obtained in~\cite{DFK26} (see Remarks~\ref{rem:incomp1} and ~\ref{rem:incomp2}).

Second, we begin a study of the asymptotic resurgence of facet ideals of graphic matroids in \Cref{sec:graphic-matroids}. In \cite{DFK26} we obtained general bounds for the asymptotic resurgence of the facet ideal of a matroid in terms of the size of the ground set and the rank of the matroid. When considering the facet ideals of graphic matroids, the underlying graph provides additional combinatorial tools that allow us to sharpen the bounds in \cite{DFK26}, often yielding the exact asymptotic resurgence.  Concretely, if $G$ is a graph with the vertex set $V(G)$ and the edge set $E(G)$, the \textit{graphic matroid} $M=M(G)$ is the matroid on the ground set $E(G)$ with bases corresponding to spanning forests of $G$.  Let $S=\KK[x_e:e\in E(G)]$.  The facet ideal $I(M)$ is the ideal
\[
I(M)=\left\langle \prod_{e \in T} x_e~:~ T \mbox{ a spanning forest of } G\right\rangle.
\]
Two of our main results, \Cref{thm:asym-gr-mat-up} and \Cref{cor:asym-lb-grap}, yield that, if $G$ is a simple $2$-connected graph,
\[
\frac{2(\cu(G)-1)}{\cu(G)}\le \widehat{\rho}(I(M(G)))\le \frac{2(|V(G)|-1)}{|V(G)|},
\]
where $V(G)$ is the vertex set of $G$ and $\cu(G)$ is the \textit{circumference} of $G$.  The upper and lower bounds coincide when $\cu(G)=|V(G)|$; that is, $G$ is Hamiltonian.  These bounds bear a striking resemblance to the asymptotic resurgence of edge and cover ideals of graphs: it follows from \cite{DFMS19} and \cite{Villarreal-2023} that
\[
\frac{2(\omega(G) -1)}{\omega(G)} \leq \widehat{\rho}(I(G))=\frac{2(\chi_f(G)-1)}{\chi_f(G)}=\widehat{\rho}(J(G)) \leq \frac{2(\chi(G)-1)}{\chi(G)},\]
where $I(G)$ is the edge ideal of $G$, $J(G)$ is the cover ideal of $G$, $\omega(G)$ is the clique number of $G$, $\chi(G)$ is the chromatics number of $G$ and $\chi_f(G)$ is the fractional chromatic number of $G$.  Despite the clear resemblance, it is not clear that the asymptotic resurgence of the facet ideal of a graphic matroid admits such a clean characterization, as we indicate through a number of examples and questions in \Cref{sec:examples,sec:Conclusion}.

We close the introduction with some motivation for studying the asymptotic resurgence of facet ideals of matroids. A theorem of Villareal \cite{Villarreal-2023} shows that $\widehat{\rho}(I(M))=\widehat{\rho}(I_{\Delta(M^*)})$ (see \cite[Proposition~2.6]{DFK26}).
Consequently, every result on the asymptotic resurgence of facet ideals of matroids immediately translates into a corresponding result for the asymptotic resurgence of Stanley-Reisner ideals of their dual matroids, and hence to an important collection of projective schemes called \textit{matroid configurations}~\cite{GHMN17}.  In particular, our results in \Cref{sec:graphic-matroids} can be directly applied to \textit{cographic matroid configurations}.  We pursue this connection in forthcoming work.

Another motivation for studying the asymptotic resurgence of the facet ideal $I(M)$ of a matroid $M$ is that it often achieves equality in the lower bound
$
\frac{\alpha(I(M))}{\widehat{\alpha}(I(M))}\le \widehat{\rho}(I(M)).
$
We refer to this lower bound as \textit{\GHV{}} since it was derived by Guardo-Harbourne-Van Tuyl in~\cite{GHV13}.  Quite a few equigenerated squarefree monomial ideals attain equality in \GHV{} (e.g. edge ideals of graphs~\cite{DFMS19}).  In \cite{DFK26} we prove that $\widehat{\rho}(I(M))$ achieves equality in \GHV{} for sparse paving matroids whose rank is at most half the size of the ground set; conjecturally, most matroids on a ground set of size $n$ fall into this class as $n$ increases (see \cite[Conjectures~1.6 and 1.10]{MNDW11} and \cite[Conjecture~5.15]{DFK26}).  Our work in this paper continues the trend - if $G$ is Hamiltonian, our results imply that $\widehat{\rho}(I(M))$ achieves equality in \GHV{} and it is well-known that as $n$ grows, the overwhelming majority of graphs on $n$ vertices are Hamiltonian \cite{BB83}.

\section{Background}\label{sec:Background}

Let $S=\KK[x_1,\ldots,x_n]$
be a polynomial ring over a field $\KK$. Throughout this article, all ideals are homogeneous. For an ideal $I\subset S$, we denote by $\mathrm{Min}(I)$ the set of minimal primes of $I$. If $I$ is a squarefree monomial ideal, then its $s$-th symbolic power satisfies
\[
I^{(s)}
=
\bigcap_{P\in \mathrm{Min}(I)} P^s,
\] 
see, for instance, \cite{CEHH17}. For a positive integer $n$, we set $[n]:=\{1,\ldots,n\}$. If ${\bf a}=(a_1,\ldots,a_n)\in\ZZ_{\geq 0}^n$, then we write $x^{\bf a}:=x_1^{a_1}\cdots x_n^{a_n}$. For $f_1,\ldots,f_k\in S$, we write $\langle f_1,\ldots,f_k\rangle$ for the ideal generated by $f_1,\ldots,f_k$. For a subset $U\subseteq [n]$, we define
\[
x^U:=\prod_{i\in U}x_i
\qquad \text{and} \qquad
P_U:=\langle x_i ~:~ i\in U\rangle.
\]

We now introduce matroids and their facet ideals.

\subsection{Matroids and their facet ideals} A \emph{matroid} $M=(E,\cI)$ consists of a finite set $E$ together with a collection $\cI$ of subsets of $E$ satisfying the following axioms:
\begin{enumerate}
\item $\emptyset\in \cI$.
\item If $\sigma\in\cI$ and $\sigma'\subseteq \sigma$, then $\sigma'\in\cI$.
\item If $\sigma,\sigma'\in\cI$ and $|\sigma|>|\sigma'|$, then there exists an element $e\in \sigma\setminus \sigma'$ such that $\sigma'\cup\{e\}\in\cI$.
\end{enumerate}
We call $E$ the \emph{ground set} of $M$, and we call the elements of $\cI$ the \emph{independent sets} of $M$.  The maximal independent sets of $M$ are called the \emph{bases} of $M$. We denote the collection of bases of $M$ by $\cB(M)$.
A subset $D\subseteq E$ is \emph{dependent} if $D\notin \cI$. The minimal dependent sets are called the \emph{circuits} of $M$, and we denote the collection of circuits of $M$ by $\cir(M)$. We call an element $e\in E$ a \emph{loop} of $M$ if $e$ is not contained in any basis of $M$. We call two distinct elements $e,f\in E$ \emph{parallel} if $\{e,f\}$ is a circuit of $M$. We call $M$ \emph{simple} if it has no loops and no parallel elements. Standard references for matroids, which we use throughout the paper, are~\cite{Oxley2011} and~\cite{Wel}.

The \emph{rank function} of a matroid $M$ is the map 
$\rk_M:2^E\to \ZZ_{\geq 0}$ defined by $$\rk_M(A)=\max\{|A\cap B|~:~ B\in \cB(M)\}$$ for every subset $A\subseteq E$. We call $\rk_M(A)$ the \emph{rank} of $A$ in $M$. The rank of the ground set $E$ is called the \emph{rank of the matroid}, and we denote it by $\rk(M)$. Equivalently, $\rk(M)$ is the common cardinality of every basis of $M$. Throughout the article, we assume $\rk(M)<|E|$.

Given a subset $A\subseteq E$, we define the \emph{closure} of $A$ in $M$, denoted by $\cl_M(A)$, to be the maximal subset of $E$ containing $A$ with the same rank as $A$. We call a subset $F\subseteq E$ a \emph{flat} of $M$ if $\cl_M(F)=F$. We denote the collection of flats of $M$ by $\cL(M)$, and the collection of flats of rank $i$ by $\cL_i(M)$. We call a flat $F$ a hyperplane if $\rk_M(F)=\rk(M)-1$. The collection $\cL(M)$ forms a geometric lattice under inclusion. Given two flats $F_1,F_2\in \cL(M)$, their meet and join are $F_1\wedge F_2 = F_1\cap F_2$ and 
$F_1\vee F_2 = \cl_M (F_1\cup F_2)$, respectively. 

If $M=(E,\cB)$ is a matroid, then the \emph{dual matroid} $M^*$ is the matroid on the same ground set whose bases are
\[
\cB(M^*)
=
\{E\setminus B ~:~ B\in \cB(M)\}.
\]
Let $U\subseteq E$. The \emph{deletion} of $U$ from $M$, denoted by $M\backslash U$, is the matroid on the ground set $E\setminus U$ whose bases are
\[
\cB(M\backslash U)
=
\{B\in \cB(M)~:~ B\cap U=\emptyset\}.
\]
We define the \emph{contraction} of $U$ from $M$, denoted by $M/U$, by $M/U := (M^*\backslash U)^*$.

Let $M$ be a matroid on a ground set $E$ with $|E|=n$, and let $S=\KK[x_e~:~ e\in E]$. Without loss of generality, we may assume $E=[n]$, and $S=\KK[x_1,\ldots, x_n]$. The \emph{facet ideal}, or \emph{basis ideal} of $M$ is
\[
I(M):=
\langle x^B ~:~ B\in \cB(M)\rangle=\bigcap\limits_{C\in \cir(M^*)}P_C,
\]
where the prime decomposition follows, for instance, from Alexander duality applied to the Stanley-Reisner ideal of the independence complex of $M^*$ (c.f. \cite[Section~2]{DFK26}).

In~\cite{CEHH17}, Cooper-Embree-Hà-Hoefel define the \textit{symbolic polyhedron} of a monomial ideal. We restrict to the squarefree case.  For a positive integer $n$, let $\RR_{\geq 0}^n$ denote the set of vectors in $\RR^n$ with nonnegative coordinates, and let ${\bf e}_1,\ldots,{\bf e}_n$ denote the standard basis vectors of $\mathbb{R}^n$. For $U\subseteq [n]$, we define $\chi_U:=\sum\limits_{i\in U} {\bf e}_i\in {\RR^n}$.  If $E$ is a finite set with $|E|=n$, we may also write $\RR^E$ instead of $\RR^n$, with the understanding that the standard basis vectors are indexed by elements of $E$.

\begin{Definition}[\cite{CEHH17}]\label{def:SymbolicPolyhedron}
Let $I$ be a squarefree monomial ideal of $S$. The symbolic polyhedron of $I$ is
\[
\SP(I):=\left\{{\bf a}=(a_1,\ldots,a_n)\in\RR^n_{\ge 0}~:~ \chi_U \cdot {\bf a} =\sum_{i\in U} a_i \ge 1\mbox{ for all } P_U\in\mbox{Min}(I)\right\}.
\]
\end{Definition}

Since $I(M)=\bigcap\limits_{C\in\C(M^*)} P_C$, the symbolic polyhedron of $I(M)$ is determined by the circuits of $M^*$. 
\begin{Corollary}\label{cor:SPIM}
Let $M$ be a matroid on the ground set $E$.  The defining inequalities of $\SP(I(M))$ are given by $ \chi_C\cdot {\bf a}=\sum\limits_{i\in C}a_i\ge 1$ for every circuit $C\in \C(M^*)$.
\end{Corollary}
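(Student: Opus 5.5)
The plan is to unwind \Cref{def:SymbolicPolyhedron} using the decomposition $I(M)=\bigcap_{C\in\C(M^*)}P_C$ recorded just before the statement. The work splits into two parts. First we identify $\mathrm{Min}(I(M))$ exactly. Then we substitute this set into the definition of $\SP(I(M))$.

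For the first part, the displayed decomposition writes $I(M)$ as an intersection of the primes $P_C$ with $C$ a circuit of $M^*$. We must check that this decomposition is irredundant, so that $\mathrm{Min}(I(M))=\{P_C : C\in\C(M^*)\}$. A prime $P_U$ contains $P_{U'}$ exactly when $U'\subseteq U$. Circuits form an antichain, since no circuit properly contains another; this is just the minimality in the definition of a circuit. Hence no $P_C$ contains another, and no component is redundant. The minimal primes of an intersection of primes are the minimal members of that family, so every $P_C$ is a minimal prime. Conversely, every minimal prime of $I(M)$ is of this form.

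If one prefers not to rely on the displayed decomposition, there is a direct check. A monomial prime $P_U$ contains $I(M)$ iff $U$ meets every basis $B$ of $M$. This holds iff $E\setminus U$ contains no basis of $M$, that is, iff $U$ is not contained in any complement $E\setminus B$. The sets $E\setminus B$ are exactly the bases of $M^*$, so the condition says $U$ is dependent in $M^*$. The inclusion-minimal such $U$ are then precisely the circuits of $M^*$. This uses only the definition of $\cB(M^*)$.

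For the second part, we substitute into \Cref{def:SymbolicPolyhedron}. The set $\SP(I(M))$ is the set of ${\bf a}\in\RR^E_{\ge 0}$ with $\chi_U\cdot{\bf a}\ge 1$ for each $P_U\in\mathrm{Min}(I(M))$. By the first part, these are exactly the conditions $\sum_{i\in C}a_i\ge 1$ for $C\in\C(M^*)$, together with nonnegativity. This is the claim.

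The only step requiring care is the irredundancy, equivalently the identification of the minimal primes. Even that reduces to the observation that circuits are pairwise incomparable, so I expect no real obstacle.
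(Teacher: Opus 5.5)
Your proposal is correct and matches the paper's argument, which is just the one-line substitution of $I(M)=\bigcap_{C\in\C(M^*)}P_C$ into \Cref{def:SymbolicPolyhedron}. Your check that circuits of $M^*$ are pairwise incomparable, so that the $P_C$ are exactly the minimal primes, fills in a detail the paper leaves implicit. Your alternative direct derivation of those minimal primes from $\cB(M^*)$ is also valid.
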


The following theorem from \cite{CEHH17} expresses the Waldschmidt constant of $I$ in terms of optimization over its symbolic polyhedron.

\begin{Theorem}[{\cite[Corollary~6.3]{CEHH17}}]\label{thm:SPWaldschmidt}
Let $I$ be a squarefree monomial ideal of $S$.  Then
\[
\widehat{\alpha}(I)=\min\left\{\chi_{[n]} \cdot {\bf a}=\sum_{i=1}^n a_i~:~ {\bf a}=(a_1,\ldots,a_n)\in \SP(I)\right\}.
\]
\end{Theorem}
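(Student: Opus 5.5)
The plan is to compute $\alpha(I^{(s)})$ for each $s$ directly in terms of lattice points of dilates of $\SP(I)$, and then pass to the limit $\widehat{\alpha}(I)=\lim_{s\to\infty}\alpha(I^{(s)})/s$. Write $m$ for the minimum on the right-hand side.

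The first step is to describe the monomials in $I^{(s)}$. Since $I$ is squarefree, $I^{(s)}=\bigcap_{P_U\in\mathrm{Min}(I)}P_U^s$, and each $P_U^s$ is a monomial ideal. So $I^{(s)}$ is a monomial ideal, and $\alpha(I^{(s)})$ is the least degree of a monomial in it. A monomial $x^{\bf b}$ lies in $P_U^s$ exactly when $\sum_{i\in U}b_i\ge s$. Hence
\[
x^{\bf b}\in I^{(s)} \iff \chi_U\cdot {\bf b}\ge s \text{ for all } P_U\in \mathrm{Min}(I) \iff \tfrac{1}{s}{\bf b}\in \SP(I).
\]
It follows that $\alpha(I^{(s)})/s=\min\{\chi_{[n]}\cdot {\bf a} : {\bf a}\in \SP(I)\cap \tfrac1s\ZZ_{\ge 0}^n\}$.

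From this description the lower bound is immediate: $\alpha(I^{(s)})/s\ge m$ for every $s$. Note that the minimum $m$ exists, because the objective is bounded below by $0$ on the nonempty polyhedron $\SP(I)$, so the linear program attains its minimum.

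For the upper bound, I would use that $\SP(I)$ is defined by inequalities with integer coefficients. Therefore the linear program attains its minimum at a vertex ${\bf a}^*$ with rational coordinates. There are two ways to finish.

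1. Let $s$ range over multiples of a common denominator $D$ of the entries of ${\bf a}^*$. Then $s{\bf a}^*$ is an integer vector, so $\alpha(I^{(s)})\le s m$, giving equality along this subsequence.
2. For arbitrary $s$, take ${\bf b}=\lceil s{\bf a}^*\rceil$, rounding coordinatewise. Then ${\bf b}/s\ge {\bf a}^*$ coordinatewise, and $\SP(I)$ is closed upward in $\RR^n_{\ge0}$ because all constraint coefficients are nonnegative, so ${\bf b}/s\in\SP(I)$. This gives $m\le \alpha(I^{(s)})/s\le m+n/s$.

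The second version shows directly that the limit exists and equals $m$, so it avoids appealing to subadditivity (Fekete's lemma) of $s\mapsto\alpha(I^{(s)})$.

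I expect no step to be a real obstacle. The only points needing care are that the limit defining $\widehat{\alpha}$ exists, which the rounding argument handles, and that the LP minimum is attained at a rational point, which is standard for rational polyhedra.
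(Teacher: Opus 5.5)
Your proof is correct, and it is the standard argument behind this result: monomials of $I^{(s)}$ are exactly the lattice points of $s\,\SP(I)$, so $\alpha(I^{(s)})/s$ is compared directly with the LP optimum (the paper gives no proof of its own, only the citation to \cite{CEHH17}). Your rounding step $\lceil s\mathbf{a}^*\rceil$ gives $m\le \alpha(I^{(s)})/s\le m+n/s$ for every $s$, which proves that the limit exists without Fekete's lemma, and it does not even need the minimizer $\mathbf{a}^*$ to be rational.
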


\subsection{Prior results on Waldschmidt constant and asymptotic resurgence}

We collect here a few results which we will use from ~\cite{DFK26}.

\begin{Proposition}\label{prop:minors}
Let $M$ be a matroid on the ground set $E$ and let $U\subseteq E$.  Then
\[
\widehat{\rho}(I(M))\ge \max\{\widehat{\rho}(I(M/U)),\widehat{\rho}(I(M\backslash U))\}.
\]
\end{Proposition}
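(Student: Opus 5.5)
We will prove the two inequalities separately, in each case by relating the facet ideal of the minor to $I(M)$ through a ring map that is compatible with both ordinary and symbolic powers. We then use the characterization of $\widehat{\rho}$ through the containments $I^{(st)}\subseteq I^{rt}$ for $t\gg0$. (The displayed definition writes $I^{st}$, but the intended meaning is clearly the symbolic power $I^{(st)}$.) Write $N$ for either minor, on the ground set $E\setminus U$, and $S'=\KK[x_e : e\in E\setminus U]$. It suffices to show that whenever $I(N)^{(s)}\not\subseteq I(N)^r$, we also have $I(M)^{(s)}\not\subseteq I(M)^r$. Applying this with $(st,rt)$ in place of $(s,r)$ then gives the inequality of suprema.

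\textbf{Deletion.} Here we may assume $U$ contains no coloop; otherwise the deletion is defined via bases meeting $U$ minimally, which is the standard convention. Then $\cB(M\backslash U)=\{B\in\cB(M): B\cap U=\emptyset\}$. Let $\pi:S\to S'$ send $x_e\mapsto 0$ for $e\in U$ and fix the other variables. Then $\pi(I(M))=I(M\backslash U)$ and $\pi(I(M)^r)=I(M\backslash U)^r$, since monomials are killed exactly when their support meets $U$. For symbolic powers we use \Cref{cor:SPIM}: $I(M)^{(s)}=\bigcap_{C\in\C(M^*)}P_C^s$. Since $(M\backslash U)^*=M^*/U$, the circuits of $M^*/U$ are the minimal nonempty sets $C\setminus U$ with $C\in\C(M^*)$. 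So a monomial $x^{\bf a}\in S'$ satisfies $\chi_C\cdot{\bf a}\ge s$ for all $C\in\C(M^*)$ if and only if it does so for the circuits of $M^*/U$. Hence $I(M\backslash U)^{(s)}=I(M)^{(s)}\cap S'$. Now take a monomial $m\in I(M\backslash U)^{(s)}\setminus I(M\backslash U)^r$. Then $m\in I(M)^{(s)}$. If we had $m\in I(M)^r$, applying $\pi$ would give $m=\pi(m)\in I(M\backslash U)^r$, a contradiction.

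\textbf{Contraction.} Let $B_U$ be a basis of $M|U$. Then $\cB(M/U)=\{B\setminus B_U : B\in\cB(M),\ B\supseteq B_U\}$, and $(M/U)^*=M^*\backslash U$ has circuits exactly $\{C\in\C(M^*):C\cap U=\emptyset\}$. Use the map $\phi:S\to S'$ that sends $x_e\mapsto1$ for $e\in U$. Then $\phi(I(M))\supseteq I(M/U)$, and in fact equality holds: each $x^{B\setminus U}$ is divisible by some generator $x^{B'\setminus B_U}$, by basis exchange inside $U$. Also $\phi(I(M)^r)=I(M/U)^r$. For symbolic powers, given a monomial $m\in I(M/U)^{(s)}$, set $m'=m\cdot\prod_{e\in U}x_e^{s}$. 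Every $C\in\C(M^*)$ either meets $U$, in which case $m'\in P_C^s$ by the $U$-part, or avoids $U$, in which case $C$ is a circuit of $M^*\backslash U$ and $m\in P_C^s$. So $m'\in I(M)^{(s)}$. If $m'\in I(M)^r$, then applying $\phi$ gives $m=\phi(m')\in I(M/U)^r$. Hence non-containment for $M/U$ lifts to non-containment for $M$.

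The main obstacle is the identification $\phi(I(M)^r)=I(M/U)^r$, which relies on $\phi(I(M))=I(M/U)$. This needs the matroid fact that for every basis $B$ of $M$ there is a basis $B'\supseteq B_U$ with $B'\setminus U\subseteq B\setminus U$. We get it by extending $B\cap U$ to $B_U$ and completing $B_U$ to a basis using elements of $B$ outside $U$, via the augmentation axiom; this works because $\rk(B_U\cup (B\setminus U))=\rk(M)$. A second delicate point is the coloop convention for deletion. If the paper's definition literally requires $B\cap U=\emptyset$, then deleting coloops is excluded, and we restrict to that case; alternatively we reduce to it, since coloops are variables dividing $I(M)$ and can be factored out without changing $\widehat{\rho}$.
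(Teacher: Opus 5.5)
Your proof is correct apart from one misstated hypothesis, and it takes a different, self-contained route. The paper proves the proposition only by citing Corollary~3.4 and Remark~3.7 of \cite{DFK26}. You instead work directly with the specializations $x_e\mapsto 0$ and $x_e\mapsto 1$ for $e\in U$. You identify $\pi(I(M)^r)=I(M\backslash U)^r$ and $\phi(I(M)^r)=I(M/U)^r$. You control symbolic powers through the circuits of $(M\backslash U)^*=M^*/U$ and $(M/U)^*=M^*\backslash U$, and you lift a monomial witness of non-containment back to $S$ (via $m'=m\prod_{e\in U}x_e^s$ for contractions). The citation is shorter but opaque inside this paper. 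Your argument is elementary and proves something stronger: for every pair $(s,r)$, $I(N)^{(s)}\not\subseteq I(N)^r$ implies $I(M)^{(s)}\not\subseteq I(M)^r$. The same proof therefore also gives $\rho(I(M))\ge\rho(I(N))$ for the ordinary resurgence, not just $\widehat{\rho}$.

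The hypothesis to fix is in the deletion step. ``$U$ contains no coloop'' is not the right condition for $\cB(M\backslash U)=\{B\in\cB(M): B\cap U=\emptyset\}$. You need $U$ to be coindependent, i.e.\ to contain no cocircuit of $M$. For example, two edges of a triangle contain no coloop, yet no spanning tree avoids them, so $\pi(I(M))=0$. Coindependence is also exactly what makes $C\setminus U\neq\emptyset$ for every $C\in\C(M^*)$, which your identification $I(M\backslash U)^{(s)}=I(M)^{(s)}\cap S'$ silently uses.

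Under the paper's literal definition of deletion, coindependence is automatic, since that formula yields a matroid only for coindependent $U$. For the standard definition, delete one element at a time. A single non-coloop is coindependent, so your argument applies. Deleting a coloop is the same as contracting it, which your contraction argument covers.

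One small wording point in the contraction step: $B\cap U$ need not lie inside the fixed $B_U$. Your actual argument does not need this, because it augments $B_U$ inside $B_U\cup(B\setminus U)$, which has rank $\rk(M)$ since $B_U$ spans $U$.
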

\begin{proof}
This follows from \cite[Corollary~3.4]{DFK26} and \cite[Remark~3.7]{DFK26}.
\end{proof}

\begin{Theorem}\cite[Proposition~3.8, Theorem~3.9]{DFK26}\label{thm:asym-res-facet}
Let $M$ be a matroid on the ground set $E$.  Then, 
\[
\widehat{\rho}(I(M))=
\max\limits_{F \in  \cL(M) \setminus \{E\}}\left\{ \frac{\rk(M/F)}{\widehat{\alpha}(I(M/F))}  \right \}=\max\left\lbrace \frac{\rk(M)}{\widehat{\alpha}(I(M))},\widehat{\rho}_{c,1}(I(M))\right\rbrace,
\]
where $\widehat{\rho}_{c,1}(I(M))=\max\limits_{F\in \cL_1(M)}\{\widehat{\rho}(I(M/F))\}$.
\end{Theorem}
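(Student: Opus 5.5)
The plan is to prove the first equality by a direct polyhedral computation, and then derive the second from the first by induction on rank. For the first, I would use the characterization of asymptotic resurgence of monomial ideals (DiPasquale--Drabkin): $\widehat{\rho}(I)$ is the supremum of $s/r$ with $I^{(s)}\not\subset \overline{I^r}$. For a squarefree monomial ideal, integral closures of powers are governed by the Newton polyhedron $\NP(I)$ and symbolic powers by $\SP(I)$. This gives
\[
\widehat{\rho}(I)=\max_{{\bf a}\in \SP(I)} \frac{1}{\tau({\bf a})},\qquad \tau({\bf a})=\max\{t~:~{\bf a}\in t\,\NP(I)\},
\]
with the maximum attained at a vertex of $\SP(I)$.

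\textbf{Step 1: describe $\NP(I(M))$.} I would show that $\NP(I(M))$ is the base polytope of $M$ plus the nonnegative orthant. By Edmonds' polymatroid theorem, this set is cut out by ${\bf a}\ge 0$ together with
\[
\sum_{e\notin F}a_e\ge \rk(M)-\rk_M(F)=\rk(M/F)\quad\text{for every flat } F\in\cL(M)\setminus\{E\}.
\]
It then follows that $\tau({\bf a})=\min_F \frac{{\bf a}(E\setminus F)}{\rk(M/F)}$. Consequently
\[
\widehat{\rho}(I(M))=\max_{F}\ \frac{\rk(M/F)}{\min\{{\bf a}(E\setminus F)~:~{\bf a}\in\SP(I(M))\}}.
\]
This step, together with justifying the $\NP$/$\SP$ formula, is where I expect the main work. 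In particular, one must check that the inequalities above suffice for membership in the up-closure of the base polytope, and not merely that they are necessary.

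\textbf{Step 2: identify the inner minimum.} Fix $F$. By \Cref{cor:SPIM}, the constraints of $\SP(I(M))$ are indexed by the cocircuits of $M$. Taking the coordinates $a_e$ for $e\in F$ as large as we like costs nothing in the objective ${\bf a}(E\setminus F)$. Doing so satisfies every cocircuit meeting $F$. The remaining constraints come from the cocircuits of $M$ contained in $E\setminus F$, and these are exactly the circuits of $(M/F)^*=M^*\backslash F$. Hence the minimum equals $\min\{{\bf b}(E\setminus F): {\bf b}\in\SP(I(M/F))\}$, which is $\widehat{\alpha}(I(M/F))$ by \Cref{thm:SPWaldschmidt}. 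Excluding $F=E$ is harmless, since $\rk(M/E)=0$. This proves the first equality.

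\textbf{Step 3: the second equality.} The term $F=\cl_M(\emptyset)$ gives $\rk(M)/\widehat{\alpha}(I(M))$, since contracting loops does not change the facet ideal. Every other flat $F$ contains some rank-one flat $F_1$. Then $M/F=(M/F_1)/(F\setminus F_1)$, and $F\setminus F_1$ is a proper flat of $M/F_1$. Conversely, every proper flat of $M/F_1$ arises in this way from a flat of $M$ containing $F_1$. Applying the first equality to $M/F_1$ therefore shows that
\[
\max_{F\supseteq F_1}\frac{\rk(M/F)}{\widehat{\alpha}(I(M/F))}=\widehat{\rho}(I(M/F_1)).
\]
Taking the maximum over $F_1\in\cL_1(M)$ yields $\widehat{\rho}_{c,1}(I(M))$. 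The main obstacle remains Step 1, namely the rigorous link between $\widehat{\rho}$ and the pair of polyhedra, and the exact facet description of $\NP(I(M))$.
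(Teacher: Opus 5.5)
Your proposal is correct and follows essentially the route the paper points to via \cite{DFK26} and \Cref{rmk:asym-res-algo}. That route combines the integral-closure description of $\widehat{\rho}$ from \cite{DFMS19,DiPasquale-Drabkin-2021}, the flat inequalities $\chi_{E\setminus F}\cdot\mathbf{a}\ge \rk(M/F)$ cutting out $\NP(I(M))$, and the identification of $\min\{\chi_{E\setminus F}\cdot\mathbf{a}:\mathbf{a}\in\SP(I(M))\}$ with $\widehat{\alpha}(I(M/F))$, which holds because the cocircuits of $M$ contained in $E\setminus F$ are exactly the circuits of $M^*\backslash F$; your $\tau$-formulation is just the facet-by-facet formula $\widehat{\rho}(I)=\max_i \nu_i(I)/\widehat{\nu}_i(I)$, and your Step 3 correctly obtains the second equality by applying the first one to $M/F_1$, with no induction actually needed.
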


Recall that the uniform matroid $\mathrm{U}_{k,n}$ is the rank $k$ matroid on a ground set $E$ with $|E|=n$ whose bases are precisely the $k$-subsets of $E$. Equivalently, when $k<n$, the circuits of $\mathrm{U}_{k,n}$ are exactly the subsets of $E$ of cardinality $k+1$.

\begin{Proposition}\cite[Corollary 5.7]{DFK26}\label{prop:asym-upper-bound}
Let $M$ be a non-uniform matroid on the ground set $E$. Then, 
\[\widehat{\rho}(I(M)) \le 
   \max \left\{  \frac{\rk(M)(|E|-\rk(M))}{|E|-1}, 
    \frac{(\rk(M)-1)(|E|-\rk(M)+1)}{|E|-1} \right\}.\]
\end{Proposition}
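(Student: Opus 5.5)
The plan is to combine the contraction formula of \Cref{thm:asym-res-facet} with lower bounds on Waldschmidt constants obtained by LP duality from \Cref{thm:SPWaldschmidt}. Put $r=\rk(M)$ and $n=|E|$. By \Cref{thm:asym-res-facet},
\[
\widehat{\rho}(I(M))=\max_{F\in\cL(M)\setminus\{E\}}\frac{\rk(M/F)}{\widehat{\alpha}(I(M/F))}.
\]
So it suffices to bound each ratio by one of the two quantities in the statement. I would treat $F=\cl_M(\emptyset)$ (the loops) separately from flats of positive rank. Loops lie in no basis and give variables outside the support of the generators. Deleting them lowers $|E|$, and both expressions increase with $|E|$ for fixed $r$. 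So I may assume $M$ is loopless.

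For a loopless matroid $N$ of rank $k$ on $m$ elements, \Cref{cor:SPIM} and \Cref{thm:SPWaldschmidt} give $\widehat{\alpha}(I(N))$ as the value of an LP minimizing $\sum a_i$ subject to $\chi_{C^*}\cdot{\bf a}\ge 1$ for all cocircuits $C^*$. By LP duality, it is also the maximum total weight of a fractional packing of cocircuits, in which each element is covered with total weight at most $1$.

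\textbf{Uniform case.} For $N=\mathrm{U}_{k,m}$ the cocircuits are the $(m-k+1)$-subsets. The symmetric choice $a_i=1/(m-k+1)$ and uniform dual weights show $\widehat{\alpha}=m/(m-k+1)$. The ratio is then $k(m-k+1)/m$.

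\textbf{Non-uniform case.} The key claim is that $\widehat{\alpha}(I(N))\ge (m-1)/(m-k)$, which gives ratio at most $k(m-k)/(m-1)$. Non-uniformity gives a dependent $k$-set, hence a hyperplane $H$ with $|H|\ge k$ and a cocircuit $E\setminus H$ of size at most $m-k$. I would build a fractional packing that puts weight on $E\setminus H$ and on the hyperplane complements $E\setminus H'$ for hyperplanes $H'$ meeting $H$ suitably. The weights are to be averaged so that every element is covered with total weight at most $1$ and the total weight is $(m-1)/(m-k)$. Equivalently, I would average the primal constraints over this family of cocircuits to force $\sum a_i\ge (m-1)/(m-k)$. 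This averaging is the main obstacle. I would first try it on the extremal example, a uniform matroid with one extra dependent $k$-set, to find the right weights, and then use circuit elimination to show that the weights extend in general.

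\textbf{Assembling the bound.} If $F=\emptyset$, then $N=M$ is non-uniform and the ratio is at most $r(n-r)/(n-1)$. If $\rk(F)=j\ge1$, then $M/F$ has rank $r-j$ on at most $n-|F|\le n-1$ elements; after removing loops and simplifying it may or may not be uniform.
\begin{enumerate}
\item If $M/F$ is non-uniform, then $k(m-k)/(m-1)$ is increasing in $m$ and, for fixed $m$, increasing in $k$ when $k\le m/2$.
\item If $M/F$ is uniform, then $k(m-k+1)/m$ is increasing in $m$, and at $k=r-1$, $m=n-1$ it equals $(r-1)(n-r+1)/(n-1)$.
\end{enumerate}
The remaining step is to check that the worst case is $j=1$ with $|F|=1$, so that every ratio is dominated by one of the two stated quantities. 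I expect this to be a routine, if slightly fiddly, comparison of these explicit rational functions.
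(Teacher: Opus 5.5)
\textbf{The key claim is false for matroids of rank more than about half the ground set.} You claim that every loopless non-uniform matroid $N$ of rank $k$ on $m$ elements satisfies $\widehat{\alpha}(I(N))\ge \frac{m-1}{m-k}$. This fails once $2k>m+1$, so the fractional packing you hope to build cannot exist in that range.

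The extremal example you propose to test on is itself a counterexample. In $\mathrm{U}_{3,4}$, declare $\{1,2,3\}$ dependent. The bases are then $\{1,2,4\},\{1,3,4\},\{2,3,4\}$, so $4$ is a coloop and $N=\mathrm{U}_{2,3}\oplus\mathrm{U}_{1,1}$. This $N$ is loopless and non-uniform, and by \Cref{thm:direct-sum} we get $\widehat{\alpha}(I(N))=\frac32+1=\frac52<3=\frac{m-1}{m-k}$.

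Consequently your treatment of $F=\emptyset$ breaks. Here $\frac{\rk(N)}{\widehat{\alpha}(I(N))}=\frac65$, whereas you assert it is at most $\frac{r(n-r)}{n-1}=1$. The proposition itself survives: $\widehat{\rho}(I(N))=\frac43$, attained by contracting the coloop, and this equals the second term $\frac{(r-1)(n-r+1)}{n-1}$. Your assembly simply sends the $F=\emptyset$ ratio to the wrong term of the maximum.

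\textbf{How to repair it.} The correct input is \Cref{lem:nonuniform}: $\widehat{\alpha}(I(N))\ge\min\left\{\frac{m-1}{m-k},\frac{m+1}{m-k+1}\right\}$. The second entry is the smaller one exactly when $2k>m+1$. This bound needs no cocircuit packing or circuit elimination. Choosing a non-basis $k$-set $C$ gives $I(N)\subseteq J=\langle x^U: |U|=k,\ U\neq C\rangle$, hence $\widehat{\alpha}(I(N))\ge\widehat{\alpha}(J)$, and only the single ideal $J$ has to be computed (\cite[Proposition~5.5]{DFK26}).

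With this bound, the case $F=\emptyset$ gives $\frac{r}{\widehat{\alpha}(I(M))}\le\max\left\{\frac{r(n-r)}{n-1},\frac{r(n-r+1)}{n+1}\right\}$. Cross-multiplying shows $\frac{r(n-r+1)}{n+1}\le\frac{(r-1)(n-r+1)}{n-1}$ if and only if $2r\ge n+1$, which is precisely the regime where that term is active.

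For flats of rank $j\ge 1$ you need no uniform/non-uniform split at all. Here $M/F$ is loopless of rank $r-j$ on $n-|F|\le n-j$ elements. By \Cref{lem:Chudnovsky}, with $\cu((M/F)^*)\le n-|F|-(r-j)+1$, its Waldschmidt constant is at least the uniform value. So the ratio is at most $\frac{(r-j)(n-r+1)}{n-j}\le\frac{(r-1)(n-r+1)}{n-1}$.

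Finally, your loop reduction loses the hypothesis, since $M$ minus its loops may be uniform. That case is closed by the uniform value: $\frac{r(n'-r+1)}{n'}\le\frac{r(n-r)}{n-1}$ for $n'\le n-1$.
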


\section{Reduction principles for the asymptotic resurgence of facet ideals of matroids}\label{sec:reduction-techniques} 

In this section, we develop reduction principles that allow us to reduce the study of asymptotic resurgence of facet ideals of matroids to simple (\Cref{thm:simplification}) and connected matroids (\Cref{thm:direct-sum}). 

\subsection{Reduction to simple matroids:} Let $M$ be a matroid of rank $k$ on the ground set $E$. A \emph{parallel extension} $N$ of $M$ is a single-element extension obtained by adding a new element $f$ in parallel to an existing element $e \in E$. If $e$ is a loop in $M$, then both $e$ and $f$ are loops in $N$. If $\rk_M(\{e\}) = 1$, then $\{e,f\}$ is a circuit of $N$, and more generally, for every $g \in \text{cl}_M(\{e\})$, $\{g,f\}$ is a circuit of $N$. The rank of the parallel extension $N$ is the same as that of $M$, that is, $\rk(N) = \rk(M) = k$. For each $0 \leq r \leq k$, the flats of $N$ of rank $r$ are precisely:
\begin{itemize}
    \item the flats $F \subseteq E$ of $M$ of rank $r$ with $e \notin F$, and
    \item the sets $F \cup \{f\}$, where $F$ is a flat of $M$ of rank $r$ with $e \in F$.
\end{itemize}

\begin{Theorem}\label{thm:para-ext}
Let $M$ be a matroid on the ground set $E$, and let $N$ be a parallel extension of $M$.  Then, $$\widehat{\alpha}(I(N))=\widehat{\alpha}(I(M)).$$
\end{Theorem}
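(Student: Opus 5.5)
The plan is to compute both Waldschmidt constants as linear programs via \Cref{thm:SPWaldschmidt}, and then move feasible points back and forth between the two symbolic polyhedra. By \Cref{cor:SPIM}, $\SP(I(M))$ is cut out by the inequalities $\chi_D\cdot{\bf a}\ge 1$ for $D\in\C(M^*)$, i.e.\ for the cocircuits of $M$. These are exactly the complements $E\setminus H$ of the hyperplanes $H$ of $M$. The same description holds for $N$ on the ground set $E\cup\{f\}$.

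\textbf{Step 1: describe the cocircuits of $N$.} This is the step I expect to need the most care, and it uses the description of the flats of $N$ given just before the statement. Take $r=k-1$.
\begin{itemize}
\item The hyperplanes of $N$ are the hyperplanes $H$ of $M$ with $e\notin H$, and the sets $H\cup\{f\}$ with $H$ a hyperplane of $M$ containing $e$.
\item Taking complements in $E\cup\{f\}$, the cocircuits of $N$ are therefore $D\cup\{f\}$ for each cocircuit $D$ of $M$ with $e\in D$, and $D$ itself for each cocircuit $D$ of $M$ with $e\notin D$.
\end{itemize}
In the loop case, $e$ lies in every hyperplane, so $e$ and $f$ lie in no cocircuit. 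The cocircuits of $N$ then coincide with those of $M$, which is consistent with the description above. So in all cases every cocircuit of $N$ has the form $D$ or $D\cup\{f\}$ for a cocircuit $D$ of $M$, and $f$ is added precisely when $e\in D$.

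\textbf{Step 2: show $\widehat{\alpha}(I(N))\le\widehat{\alpha}(I(M))$.} Take ${\bf a}\in\SP(I(M))$ attaining the minimum in \Cref{thm:SPWaldschmidt}, and extend it to ${\bf b}\in\RR^{E\cup\{f\}}_{\ge0}$ by setting $b_f=0$. Each cocircuit of $N$ contains a cocircuit $D$ of $M$, so its ${\bf b}$-sum is at least $\chi_D\cdot{\bf a}\ge1$. Hence ${\bf b}\in\SP(I(N))$, and its coordinate sum equals that of ${\bf a}$.

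\textbf{Step 3: show $\widehat{\alpha}(I(M))\le\widehat{\alpha}(I(N))$.} Take ${\bf b}\in\SP(I(N))$ and define ${\bf a}\in\RR^E_{\ge0}$ by $a_e=b_e+b_f$ and $a_g=b_g$ for $g\ne e$. Let $D$ be a cocircuit of $M$.
\begin{itemize}
\item If $e\in D$, then $\chi_D\cdot{\bf a}=\chi_{D\cup\{f\}}\cdot{\bf b}\ge1$.
\item If $e\notin D$, then $D$ is itself a cocircuit of $N$, so $\chi_D\cdot{\bf a}=\chi_D\cdot{\bf b}\ge1$.
\end{itemize}
Thus ${\bf a}\in\SP(I(M))$, and its coordinate sum equals that of ${\bf b}$.

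Combining Steps 2 and 3 with \Cref{thm:SPWaldschmidt} gives $\widehat{\alpha}(I(N))=\widehat{\alpha}(I(M))$.
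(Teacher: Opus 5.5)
Your proposal is correct and follows essentially the same route as the paper. Both arguments describe $\C(N^*)$ through the hyperplanes of $N$, then pass between the two symbolic polyhedra by extending with $a_f=0$ in one direction and merging $a_e+a_f$ into the $e$-coordinate in the other. Your explicit check of the loop case is a small addition that the paper's formula for $\C(N^*)$ already covers implicitly.
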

\begin{proof}
Let $N$ be a parallel extension of $M$ obtained by adding a new element $f$ in parallel to an element $e \in E$.  We use \Cref{thm:SPWaldschmidt} to show that $\widehat{\alpha}(I(N))=\widehat{\alpha}(I(M))$. By \Cref{cor:SPIM}, the symbolic polyhedron $\SP(I(M))$ is defined by the inequalities
$ \chi_C \cdot \mathbf{a}  = \sum\limits_{i \in C} a_i \geq 1$ for all  $C \in \mathcal{C}(M^*),$ 
and $\SP(I(N))$ is defined by $ \chi_C \cdot \mathbf{a} = \sum\limits_{i \in C} a_i \geq 1$ for all  $C \in \mathcal{C}(N^*)$. 

The hyperplanes of $N$ are the hyperplanes of $M$ not containing $e$, and the sets $H \cup \{f\}$, where $H$ is a hyperplane of $M$ with $e \in H$, hence $\C(N^*)=\{C \in \mathcal{C}(M^*):  e \notin C\} \cup\{C \cup \{f\}: C \in \mathcal{C}(M^*), e \in C\}$.  Therefore, the defining inequalities of $\SP(I(N))$ are:
\[
\sum_{i \in C} a_i \geq 1 
\quad \text{for all } C \in \mathcal{C}(M^*) \text{ with } e \notin C,
\]
and
\[
a_f + \sum_{i \in C} a_i \geq 1 
\quad \text{for all } C \in \mathcal{C}(M^*) \text{ with } e \in C.
\]

Next, let ${\bf v} \in \SP(I(M))$ be such that
$\widehat{\alpha}(I(M)) = \sum\limits_{i \in E} v_i$. Define ${\bf \widetilde{v}} \in \mathbb{R}^{E \cup \{f\}}_{\geq 0} $ by $\widetilde{v}_g = v_g \text{ for } g \in E,$ and $\widetilde{v}_f = 0$. Then,  $\bf \widetilde{v}$ satisfies all defining inequalities of $\SP(I(N))$, hence ${\bf \widetilde{v} }  \in \SP(I(N))$. Therefore,
\[
\widehat{\alpha}(I(N)) \leq \sum_{i \in E \cup \{f\}} \widetilde{v}_i 
= \sum_{i \in E} v_i 
= \widehat{\alpha}(I(M)).
\]

Conversely, let ${ \bf u} \in \SP(I(N)) $ be such that
$\widehat{\alpha}(I(N)) = \sum\limits_{i \in E \cup \{f\}} u_i$.
Define ${\bf \overline{u}} \in \mathbb{R}^E_{\geq 0} $ by
$\overline{u}_g = u_g \text{ for all } g \in E \setminus \{e\}$ and  $\overline{u}_e = u_e + u_f$. Then, ${\bf \overline{u}}$ satisfies all defining inequalities of $\SP(I(M))$, and hence ${\bf \overline{u}} \in \SP(I(M))$. Consequently,
\[
\widehat{\alpha}(I(M)) \leq \sum_{i \in E} \overline{u}_i 
= \sum_{i \in E \cup \{f\}} u_i 
= \widehat{\alpha}(I(N)).
\]
Hence, $\widehat{\alpha}(I(N)) = \widehat{\alpha}(I(M))$.
\end{proof}

Let $M$ be a matroid on the ground set $E$. The \emph{simplification} of $M$, denoted $\mathrm{si}(M)$, is obtained as follows:
\begin{itemize}
\item for any flat $F\neq \emptyset$ of rank $0$, delete all the elements of $F$, and
\item for each flat $F$ of rank $1$, retain exactly one element of $F$ and delete all others.
\end{itemize}
 
Notice that if $M$ is a loopless matroid, then $M$ is obtained from $\si(M)$ through a series of parallel extensions. Furthermore, $\mathrm{si}(M)$ is the unique simple matroid whose lattice of flats is isomorphic to that of $M$ (see~\cite[Chapter 1]{Oxley2011}).

\begin{Theorem}\label{thm:simplification}
Let $M$ be a matroid of rank $k$ on the ground set $E$. Then, 
\begin{enumerate}
    \item $\widehat{\alpha}(I(M/\mathrm{cl}_M(F)))=\widehat{\alpha}(I(\mathrm{si}(M)/F))$ for every $0 \leq i \leq k-1$ and every $F \in \cL_i(\mathrm{si}(M))$. 
    \item $\widehat{\rho}(I(M)) =\widehat{\rho}(I(\mathrm{si}(M))). $
\end{enumerate}
\end{Theorem}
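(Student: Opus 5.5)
Both parts rest on two facts: $M$ (after loops are handled) is obtained from $\si(M)$ by repeated parallel extensions, and contraction by a flat commutes with parallel extension. Part (2) then follows from part (1) via the contraction formula of \Cref{thm:asym-res-facet}.

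\textbf{Loops.} Write $L=\cl_M(\emptyset)$ for the set of loops of $M$. A loop lies in no basis, so the variables $x_\ell$ with $\ell\in L$ do not appear in the generators of $I(M)$. Moreover $M$ and $M\backslash L$ have the same bases, and $M/\cl_M(F)$ and $(M\backslash L)/\cl_{M\backslash L}(F)$ likewise have the same bases up to deleting loops. Hence $I(M)$ is the extension of $I(M\backslash L)$ to a polynomial ring with extra variables. Waldschmidt constants are unchanged under adjoining variables: the symbolic polyhedron simply acquires free coordinates, and setting these to $0$ is optimal in \Cref{thm:SPWaldschmidt}. The same holds for $\widehat{\rho}$, since neither the containments $I^{(s)}\subseteq I^r$ nor the minimal primes change under flat extension to a polynomial ring. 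So we may assume $M$ is loopless. Then $M$ arises from $\si(M)$ by a finite sequence of parallel extensions, and $\cL(M)\cong\cL(\si(M))$ via $F\mapsto\cl_M(F)$, with inverse $G\mapsto G\cap E(\si(M))$. This isomorphism preserves rank, so it restricts to a bijection $\cL_i(\si(M))\to\cL_i(M)$.

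\textbf{Part (1).} The key lemma is this: if $N$ is obtained from $M$ by adding $f$ parallel to $e$, and $F$ is a flat of $M$, then $N/\cl_N(F)$ is either equal to $M/F$ or is a parallel extension of $M/F$.

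\begin{itemize}
\item If $e\in F$, then $\cl_N(F)=F\cup\{f\}$. The contractions $N/(F\cup\{f\})$ and $M/F$ have the same ground set, and their bases agree: $B$ is a basis of the contraction iff $B\cup B_F$ is a basis of the whole matroid for a basis $B_F$ of $F$, and $F$ and $F\cup\{f\}$ share bases. So the two contractions coincide.
\item If $e\notin F$, then $\cl_N(F)=F$. In $N/F$ the element $f$ is parallel to $e$, or both are loops if $e\in\cl(F)$, which is excluded since $F$ is a flat not containing $e$. So $N/F$ is the parallel extension of $M/F$ by $f$ at $e$.
\end{itemize}

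Either way, \Cref{thm:para-ext} gives $\widehat{\alpha}(I(N/\cl_N(F)))=\widehat{\alpha}(I(M/F))$. Iterating along the chain $\si(M)=M_0\subset M_1\subset\cdots\subset M_m=M$ of parallel extensions proves (1). This requires $\cl_{M_{j+1}}(\cl_{M_j}(F))=\cl_{M_{j+1}}(F)$ at each step, which is clear. The main obstacle is exactly this bookkeeping: checking the basis description of contraction in the lemma and the case split on $e\in F$, including that the closure of a flat of $M$ in $N$ is as stated. I would verify it directly from the description of flats of $N$ given before \Cref{thm:para-ext}.

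\textbf{Part (2).} Apply \Cref{thm:asym-res-facet} to both matroids:
\[
\widehat{\rho}(I(M))=\max_{G\in\cL(M)\setminus\{E\}}\frac{\rk(M/G)}{\widehat{\alpha}(I(M/G))}.
\]
Reindex this maximum by $G=\cl_M(F)$ with $F\in\cL(\si(M))\setminus\{E(\si(M))\}$, using the rank-preserving bijection of flats. Then
\[
\rk(M/\cl_M(F))=k-\rk(F)=\rk(\si(M)/F),
\]
and part (1) identifies the denominators. The two maxima therefore agree term by term, which gives $\widehat{\rho}(I(M))=\widehat{\rho}(I(\si(M)))$.
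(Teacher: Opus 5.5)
Your proof is correct, and its skeleton is the paper's. Everything reduces to the invariance of $\widehat{\alpha}$ under parallel extension (\Cref{thm:para-ext}). Part (2) is obtained exactly as in the paper, by reindexing the contraction formula of \Cref{thm:asym-res-facet} along the rank-preserving bijection $F\mapsto \cl_M(F)$ from $\cL(\si(M))$ to $\cL(M)$.

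The difference lies in how part (1) identifies $M/\cl_M(F)$ with $\si(M)/F$ up to parallel extensions.

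\textbf{The paper's route.} It argues globally through lattices of flats. Since $\cL(M)\cong\cL(\si(M))$ with $F$ corresponding to $G=\cl_M(F)$, the contractions $M/G$ and $\si(M)/F$ have isomorphic lattices. By uniqueness of the simple matroid with a given lattice of flats, $\si(M/G)\cong\si(\si(M)/F)$. Both contractions are by flats, hence loopless, so each is an iterated parallel extension of this common simplification, and \Cref{thm:para-ext} finishes.

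\textbf{Your route.} You prove a local lemma: contracting a single parallel extension $N$ by $\cl_N(F)$ gives either literally $M/F$ (when $e\in F$) or the parallel extension of $M/F$ by $f$ at $e$ (when $e\notin F$). You then iterate along the chain $\si(M)=M_0\subset\cdots\subset M_m=M$.

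\textbf{What each buys.} The paper's argument is shorter but treats the lattice facts as black boxes, and its equality $\si(M/G)=\si(\si(M)/F)$ is really only an isomorphism. Yours is elementary and self-contained, with the closure and basis bookkeeping checked explicitly.

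Your preliminary reduction to the loopless case is valid. It is needed only because your chain starts at $\si(M)$, and loops cannot be added by parallel extensions of non-loops. The paper avoids this step because every contraction $M/\cl_M(F)$ by a flat is automatically loopless, and the contraction formula only involves such contractions.
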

\begin{proof}
Let $0 \le i \le k-1$ and let $F \in \cL_i(\si(M))$. Let $G=\mathrm{cl}_M(F)$. We claim that $\si(M/G)=\si(\si(M)/F)$. Since $\si(M)$ is the unique simple matroid whose lattice of flats is isomorphic to that of $M$, the flats of $\si(M)$ are naturally identified with the flats of $M$. Under this identification, the flat $F$ of $\si(M)$ corresponds to the flat $G=\mathrm{cl}_M(F)$ of $M$. 
Now, contraction by a flat is determined at the level of lattices of flats: if two matroids have isomorphic lattices of flats and corresponding flats are contracted, then the resulting contraction lattices are again isomorphic. Therefore, $\cL(M/G)\cong \cL(\si(M)/F)$. 
Taking simplifications on both sides, and using again the fact that simplification is the unique simple matroid with the same lattice of flats, we obtain $\si(M/G)=\si(\si(M)/F)$. 

Now, it follows from \Cref{thm:para-ext} that \[\widehat{\alpha}(I(M/G))=\widehat{\alpha}(I(\si(M/G)))=\widehat{\alpha}(I(\si(\si(M)/F)))=\widehat{\alpha}(I(\si(M)/F)).\]

By \Cref{thm:asym-res-facet}, \[
\widehat{\rho}(I(M))
= \max_{\substack{0\leq i \leq k-1\\ G \in \cL_i(M)}} \left\{
\frac{\rk(M/G)}{\widehat{\alpha}(I(M/G))}\right\}
= \max_{\substack{0 \le i \le k-1,\\ F \in \cL_i(\si(M))}} 
\left\{\frac{\rk(\si(M)/F)}{\widehat{\alpha}(I(\si(M)/F))}\right\}
= \widehat{\rho}(I(\si(M))). \qedhere
\]  
\end{proof}

\subsection{Reduction to connected matroids:}

Let $M_1$ and $M_2$ be matroids on the disjoint ground sets $E_1$ and $E_2$, respectively. The \emph{direct sum} of the matroids  $M_1$ and $M_2$ is another matroid, denoted  $M_1 \oplus M_2$, on the ground set $E_1 \cup E_2$, see \cite[Chapter 5.3]{Wel}. The bases of $M_1 \oplus M_2$ are $\cB(M_1 \oplus M_2)=\{ B_1 \cup B_2~:~ B_i \in \cB(M_i)\}$. Furthermore, the dual of the direct sum is isomorphic to the direct sum of the duals, specifically $(M_1 \oplus M_2)^*=M_1^* \oplus M_2^*$. A matroid is \emph{connected} if it cannot be decomposed as a direct sum.

\begin{Theorem}\label{thm:direct-sum}
Let $M_1$ and $M_2$ be matroids on the disjoint ground sets $E_1$ and $E_2$, respectively, and let $M=M_1 \oplus M_2$. Then, we have the following: 
\begin{enumerate}
    \item $I(M)=I(M_1)\cdot I(M_2)$.
    \item $\widehat{\alpha}(I(M))=\widehat{\alpha}(I(M_1))+\widehat{\alpha}(I(M_2))$.
    \item $\widehat{\rho}(I(M))=\max\left\{\widehat{\rho}(I(M_1)),\widehat{\rho}(I(M_2))\right\}$.
\end{enumerate}
\end{Theorem}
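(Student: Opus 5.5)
For (1), I would unwind the definitions. Since $\cB(M)=\{B_1\cup B_2 : B_i\in\cB(M_i)\}$ and $E_1\cap E_2=\emptyset$, every generator $x^{B}$ of $I(M)$ factors as $x^{B_1}x^{B_2}$. So the generators of $I(M)$ are exactly the products of a generator of $I(M_1)$ and a generator of $I(M_2)$, which gives $I(M)=I(M_1)I(M_2)$.

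For (2), I would use \Cref{thm:SPWaldschmidt} together with \Cref{cor:SPIM}. Because $M^*=M_1^*\oplus M_2^*$, the circuits of $M^*$ are exactly the circuits of $M_1^*$ together with the circuits of $M_2^*$; circuits of a direct sum lie in one summand. So the defining inequalities of $\SP(I(M))\subseteq \RR^{E_1}\times\RR^{E_2}$ separate into those involving only $E_1$-coordinates and those involving only $E_2$-coordinates. Hence $\SP(I(M))=\SP(I(M_1))\times \SP(I(M_2))$. The linear objective $\chi_{E}\cdot{\bf a}$ also splits as a sum, so its minimum is the sum of the two minima.

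For (3), I would use \Cref{thm:asym-res-facet}. The lower bound $\widehat\rho(I(M))\ge\max_i \widehat\rho(I(M_i))$ follows from \Cref{prop:minors}, since $M/E_2=M_1$ and $M\backslash E_1 = M_2$ (direct sum minors). For the upper bound, I would describe the flats of $M$: they are $F=F_1\cup F_2$ with $F_i\in\cL(M_i)$. Then $M/F=M_1/F_1\oplus M_2/F_2$ and $\rk(M/F)=\rk(M_1/F_1)+\rk(M_2/F_2)$. By (2), $\widehat\alpha(I(M/F))=\widehat\alpha(I(M_1/F_1))+\widehat\alpha(I(M_2/F_2))$.

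Now $F\ne E$ means that at least one $F_i\neq E_i$. If $F_i=E_i$, then $M_i/F_i$ has rank $0$, and by the definition of $I$ its Waldschmidt constant is $0$. Its facet ideal is the unit ideal (the empty basis gives generator $1$), so the matching symbolic-polyhedron term contributes nothing. In that case the ratio equals $\rk(M_j/F_j)/\widehat\alpha(I(M_j/F_j))$ for the other index $j$, which is at most $\widehat\rho(I(M_j))$. Otherwise both $F_i\ne E_i$, and by the mediant inequality
\[
\frac{a_1+a_2}{b_1+b_2}\le\max\left\{\frac{a_1}{b_1},\frac{a_2}{b_2}\right\}\qquad (b_i>0),
\]
the ratio is bounded by $\max_i \rk(M_i/F_i)/\widehat\alpha(I(M_i/F_i))\le\max_i\widehat\rho(I(M_i))$, again using \Cref{thm:asym-res-facet} for each $M_i$.

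I expect the main obstacle to be the degenerate cases. One must check that $\widehat\alpha(I(M_i/F_i))>0$ when $F_i\ne E_i$: the contraction then has positive rank, and with the standing assumption $\rk<|E|$ it should have a nonempty dual circuit. One also has to handle summands where $\rk(M_i)=|E_i|$ (coloops), where $I(M_i)$ is principal and $\widehat\rho$ is interpreted as $1$, or as ignorable. I would treat these edge cases explicitly, noting that the ratio is at least $1$ whenever it is defined, so they do not affect the maximum.
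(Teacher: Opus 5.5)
Your proposal is correct. Part (1) matches the paper, but for (2) and (3) you take a different route. The paper gets (2) from the identity $I(M)^{(s)}=I(M_1)^{(s)}\,I(M_2)^{(s)}$ of \cite[Proposition~3.5]{JKM22}, so initial degrees add at every symbolic level. It gets (3) by citing the same proposition, a general fact about products of ideals in disjoint sets of variables that has nothing to do with matroids. You instead stay inside the linear-programming and matroid framework. For (2), the splitting $\C(M^*)=\C(M_1^*)\sqcup\C(M_2^*)$ gives $\SP(I(M))=\SP(I(M_1))\times\SP(I(M_2))$, and the program in \Cref{thm:SPWaldschmidt} separates. For (3), you use four facts: the flats of $M$ are the sets $F_1\cup F_2$; $M/F=M_1/F_1\oplus M_2/F_2$; (2) applied to this contraction makes both numerator and denominator in \Cref{thm:asym-res-facet} additive; and the mediant inequality (the same device the paper uses in \Cref{prop:sumequality}) finishes the argument. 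The paper's argument is shorter and more general. Yours is self-contained given \Cref{thm:SPWaldschmidt} and \Cref{thm:asym-res-facet}, but it leans on the much deeper contraction formula. It also forces you to handle the degenerate contractions ($F_i=E_i$, free or rank-zero summands) by hand, which you do correctly. Two small remarks. First, positivity of $\widehat\alpha(I(M_i/F_i))$ for $F_i\neq E_i$ does not need the standing assumption $\rk<|E|$, which need not survive contraction. It suffices that $\rk(M_i/F_i)>0$: then $(M_i/F_i)^*$ is not free and has a circuit $C$, forcing $\sum_{e\in C}a_e\ge 1$ on the symbolic polyhedron. Second, the lower bound in (3) also follows directly from \Cref{thm:asym-res-facet} by taking $F=F_1\cup E_2$, so \Cref{prop:minors} is not needed.
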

\begin{proof}
Part (a) follows from the definition of the facet ideal of a matroid and the description of the basis of matroid $M$ in terms of the basis of $M_1$ and $M_2$. Specifically,  \begin{align*}
     I(M)&=\langle x^B~:~ B \in \cB(M)\rangle\\ & =\langle x^{B_1 \cup B_2}~:~ B_i \in \cB(M_i)\rangle \\ & =\langle x^{B_1} \cdot x^{B_2}~:~ B_i \in \cB(M_i)\rangle \text{ as } E_1 \cap E_2 =\emptyset \text{ and } B_i \subseteq E_i \\ & =I(M_1)\cdot I(M_2).
 \end{align*} 
 
 \par Part (b): It follows from \cite[Proposition 3.5]{JKM22} that $I(M)^{(s)} =I(M_1)^{(s)} \cdot I(M_2)^{(s)}$ for all $ s \geq 1$. Therefore, $\alpha(I(M)^{(s)})= \alpha(I(M_1)^{(s)})+\alpha(I(M_2)^{(s)})$ for all $s \geq 1$, and hence, $\widehat{\alpha}(I(M))=\widehat{\alpha}(I(M_1))+\widehat{\alpha}(I(M_2))$.  

 \par Part (c) follows from part (a) and \cite[Proposition 3.5]{JKM22}.  
\end{proof}

The final result of this section shows that the facet ideal of a sum of matroids only attains equality in \GHV{} if both summands attain equality with the same value. 
The following elementary observation will be useful to us in the next proposition. 

\begin{Observation} \label{obs: fractions}
    If $a,b,c,d$ are positive real numbers with $\frac{a}{b}\geq \frac{c}{d}$, then
$\frac{a}{b}\geq \frac{a+c}{b+d}\geq \frac{c}{d}$. Moreover,
$\frac{a}{b}=\frac{c}{d}$ if and only if
$\frac{a}{b}=\frac{a+c}{b+d}$ if and only if $\frac{c}{d}=\frac{a+c}{b+d}$.
\end{Observation}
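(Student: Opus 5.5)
The statement is the elementary Observation about mediants, so I will prove it by cross-multiplication with the positive denominators $b$, $d$, and $b+d$.

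\emph{Inequalities.} Since $b,d>0$, the hypothesis $\frac{a}{b}\ge\frac{c}{d}$ is equivalent to $ad\ge bc$. For the first inequality, $\frac{a}{b}\ge\frac{a+c}{b+d}$ is equivalent to $a(b+d)\ge b(a+c)$. Expanding, this becomes $ad\ge bc$, which holds. For the second inequality, $\frac{a+c}{b+d}\ge\frac{c}{d}$ is equivalent to $d(a+c)\ge c(b+d)$. Expanding, this again reduces to $ad\ge bc$.

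\emph{Equality statement.} Each cross-multiplication step above is reversible, since we only multiply by positive quantities and cancel common terms. Therefore the same computations hold with every inequality replaced by an equality. Explicitly, $\frac{a}{b}=\frac{a+c}{b+d}$ holds if and only if $ad=bc$. Likewise, $\frac{c}{d}=\frac{a+c}{b+d}$ holds if and only if $ad=bc$. Finally, $\frac{a}{b}=\frac{c}{d}$ holds if and only if $ad=bc$. All three conditions are equivalent to $ad=bc$, so they are equivalent to each other.

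There is no real obstacle. The only point needing care is to state explicitly that every step is an equivalence, because $b$, $d$, and $b+d$ are positive.
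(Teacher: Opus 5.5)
Your proof is correct: the paper states this as an elementary observation and gives no proof. Reducing each comparison, by cross-multiplication with the positive quantities $b$, $d$, $b+d$, to $ad\ge bc$ (respectively $ad=bc$ for the equality cases) is the routine verification it implicitly relies on.
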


\begin{Proposition}\label{prop:sumequality}
Let $M_1$ and $M_2$ be matroids on the disjoint ground sets $E_1$ and $E_2$, respectively, and let $M=M_1 \oplus M_2$. Then,  $\widehat{\rho}(I(M))=\frac{\alpha(I(M))}{\widehat{\alpha}(I(M))}$ if and only if $\widehat{\rho}(I(M_1))=\frac{\alpha(I(M_1))}{\widehat{\alpha}(I(M_1))}=\frac{\alpha(I(M_2))}{\widehat{\alpha}(I(M_2))}=\widehat{\rho}(I(M_2))$.
\end{Proposition}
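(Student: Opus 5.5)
We will combine \Cref{thm:direct-sum} with \Cref{obs: fractions}, using the basic inequality $\frac{\alpha(I)}{\widehat{\alpha}(I)}\le\widehat{\rho}(I)$ (\GHV{}) for each $I(M_i)$.

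First, record the additivity of both numerator and denominator. By \Cref{thm:direct-sum}(a), $I(M)=I(M_1)I(M_2)$ with the two factors in disjoint sets of variables. Hence $\alpha(I(M))=\alpha(I(M_1))+\alpha(I(M_2))$; indeed all three ideals are equigenerated, in degrees $\rk(M)=\rk(M_1)+\rk(M_2)$. By \Cref{thm:direct-sum}(b), $\widehat{\alpha}(I(M))=\widehat{\alpha}(I(M_1))+\widehat{\alpha}(I(M_2))$. Write $a_i=\alpha(I(M_i))$, $b_i=\widehat{\alpha}(I(M_i))$ and $\rho_i=\widehat{\rho}(I(M_i))$. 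We may assume both ranks are positive; if some $M_i$ has rank $0$ then $I(M_i)=S$, and that summand is degenerate and is excluded (or handled trivially). So all $a_i,b_i>0$, and without loss of generality $\frac{a_1}{b_1}\ge \frac{a_2}{b_2}$. By \Cref{obs: fractions},
\[
\frac{\alpha(I(M))}{\widehat{\alpha}(I(M))}=\frac{a_1+a_2}{b_1+b_2}\le \frac{a_1}{b_1}\le \rho_1\le \max\{\rho_1,\rho_2\}=\widehat{\rho}(I(M)),
\]
where the last equality is \Cref{thm:direct-sum}(c).

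($\Leftarrow$) If $\rho_1=\frac{a_1}{b_1}=\frac{a_2}{b_2}=\rho_2$, then \Cref{obs: fractions} gives $\frac{a_1+a_2}{b_1+b_2}=\frac{a_1}{b_1}=\rho_1=\max\{\rho_1,\rho_2\}$, which is the required equality.

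($\Rightarrow$) If equality holds at the two ends of the chain displayed above, then every inequality in the chain is an equality. In particular $\frac{a_1+a_2}{b_1+b_2}=\frac{a_1}{b_1}$, so by \Cref{obs: fractions} we get $\frac{a_1}{b_1}=\frac{a_2}{b_2}$. The chain also gives $\rho_1=\frac{a_1}{b_1}$ and $\rho_1=\max\{\rho_1,\rho_2\}\ge\rho_2$. Finally, \GHV{} for $M_2$ gives $\rho_2\ge \frac{a_2}{b_2}=\frac{a_1}{b_1}=\rho_1$, so $\rho_2=\rho_1$.

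The only delicate point is confirming the additivity of $\alpha$ and handling the degenerate rank-zero case; everything else is bookkeeping with \Cref{obs: fractions}.
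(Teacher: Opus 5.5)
Your proposal is correct and follows essentially the same route as the paper: additivity of $\alpha$ and $\widehat{\alpha}$ together with the max formula from \Cref{thm:direct-sum}, \GHV{} for each summand, and \Cref{obs: fractions}. The only cosmetic difference is that you assume without loss of generality that $\frac{a_1}{b_1}\ge\frac{a_2}{b_2}$ and run a single chain, whereas the paper writes the two symmetric chains through $\widehat{\rho}(I(M_1))$ and $\widehat{\rho}(I(M_2))$ directly.
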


\begin{proof}
Assume that $\widehat{\rho}(I(M))=\frac{\alpha(I(M))}{\widehat{\alpha}(I(M))}$.  By Theorem \ref{thm:direct-sum} part (c), we have $$\widehat{\rho}(I(M))=\frac{\alpha(I(M))}{\widehat{\alpha}(I(M))}= \max\left\{\widehat{\rho}(I(M_1)),\widehat{\rho}(I(M_2))\right\}.$$ By \Cref{thm:direct-sum} parts (a-b), we get $\alpha(I(M))= \alpha(I(M_1))+\alpha(I(M_2))$ and $\widehat{\alpha}(I(M))=\widehat{\alpha}(I(M_1))+\widehat{{\alpha}}(I(M_2))$.

We have $\frac{\alpha(I(M_1))+\alpha(I(M_2))}{\widehat{\alpha}(I(M_1))+\widehat{{\alpha}}(I(M_2))}=\frac{\alpha(I(M))}{\widehat{\alpha}(I(M))}=\widehat{\rho}(I(M))\ge\widehat{\rho}(I(M_1))\ge \frac{\alpha(I(M_1)}{\widehat{\alpha}(I(M_1)}$.  Likewise, $\frac{\alpha(I(M_1))+\alpha(I(M_2))}{\widehat{\alpha}(I(M_1))+\widehat{{\alpha}}(I(M_2))}\ge\widehat{\rho}(I(M_2))\ge \frac{\alpha(I(M_2)}{\widehat{\alpha}(I(M_2)}$.  By \Cref{obs: fractions}, we must have $\frac{\alpha(I(M_1))}{\widehat{\alpha}(I(M_1))}={}\frac{\alpha(I(M_1))+\alpha(I(M_2))}{\widehat{\alpha}(I(M_1))+\widehat{{\alpha}}(I(M_2))}=\frac{\alpha(I(M_2))}{\widehat{\alpha}(I(M_2))}$.  This forces
$\widehat{\rho}(I(M_1))=\frac{\alpha(I(M_1))}{\widehat{\alpha}(I(M_1))}=\frac{\alpha(I(M_2))}{\widehat{\alpha}(I(M_2))}=\widehat{\rho}(I(M_2))$.

Conversely, assume that $\widehat{\rho}(I(M_1))=\frac{\alpha(I(M_1))}{\widehat{\alpha}(I(M_1))}=\frac{\alpha(I(M_2))}{\widehat{\alpha}(I(M_2))}=\widehat{\rho}(I(M_2))$. By \Cref{thm:direct-sum} part (c), we have $\widehat{\rho}(I(M))=\widehat{\rho}(I(M_1))=\frac{\alpha(I(M_1))}{\widehat{\alpha}(I(M_1))}=\frac{\alpha(I(M_2))}{\widehat{\alpha}(I(M_2))}=\widehat{\rho}(I(M_2))$ and now rearranging using parts (a-b) of \Cref{thm:direct-sum} and \Cref{obs: fractions}, we get  $\widehat{\rho}(I(M))=\frac{\alpha(I(M))}{\widehat{\alpha}(I(M))}$. 
\end{proof}

\section{Bounds on the Waldschmidt constant and Asymptotic Resurgence}\label{sec:walds-upper-bounds}
In this section, we obtain bounds for the Waldschmidt constant and asymptotic resurgence of facet ideals of matroids. In general, these new bounds are incomparable to those we found in~\cite{DFK26}.

We begin by deriving an upper bound for the Waldschmidt constant of facet ideals in terms of the \emph{girth} of the dual matroid. Recall that the girth of a matroid $M$, denoted by $\g(M)$, is the minimum cardinality of a circuit of $M$. In particular, $\g(\mathrm{U}_{k,n})=k+1$. A matroid $M$ is called \emph{paving} if $\g(M) \geq \rk(M)$.

\begin{Proposition}\label{thm:wald-gen-up} 
If $M$ is a matroid on the ground set $E$, then $\widehat{\alpha}(I(M)) \leq \frac{|E|}{\g(M^*)}$. Moreover, if $M^*$ is a non-uniform paving matroid, then $\widehat{\alpha}(I(M)) \leq \frac{|E|}{\rk(M^*)}$.
\end{Proposition}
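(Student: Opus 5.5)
Since $\widehat{\alpha}(I(M))$ is a minimum over $\SP(I(M))$ (\Cref{thm:SPWaldschmidt}), it suffices to exhibit one feasible point whose coordinate sum equals the claimed bound. The natural choice is the constant vector ${\bf a}=\frac{1}{\g(M^*)}\chi_E$. By \Cref{cor:SPIM}, the defining inequalities of $\SP(I(M))$ are $\chi_C\cdot{\bf a}\ge 1$ for $C\in\C(M^*)$. For the constant vector, $\chi_C\cdot{\bf a}=|C|/\g(M^*)\ge 1$, because every circuit of $M^*$ has at least $\g(M^*)$ elements. Nonnegativity is clear. Hence ${\bf a}\in\SP(I(M))$ and
\[
\widehat{\alpha}(I(M))\le \chi_E\cdot{\bf a}=\frac{|E|}{\g(M^*)}.
\]
This argument needs $M^*$ to have at least one circuit, which holds by the standing assumption; degenerate cases such as loops are absorbed since every circuit is still counted.

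For the second statement, I would reduce to the first by showing $\g(M^*)\ge \rk(M^*)$ whenever $M^*$ is paving. This is exactly the definition of paving, $\g(M^*)\ge\rk(M^*)$, so
\[
\frac{|E|}{\g(M^*)}\le\frac{|E|}{\rk(M^*)}.
\]
The non-uniform hypothesis is not needed for the inequality itself. It is presumably included to separate this case from the uniform case $\g=\rk+1$, where the sharper bound $|E|/(\rk(M^*)+1)$ holds, and so to indicate that for non-uniform paving $M^*$ the girth equals $\rk(M^*)$ exactly. If one wants to record this equality, it follows because a paving matroid all of whose circuits have size $\rk+1$ has every $\rk$-subset independent, and is therefore uniform.

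There is no real obstacle here. The only points to check are that the uniform vector is feasible, which is immediate from the girth, and the direction of the inequality in the paving case.
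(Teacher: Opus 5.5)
Your proposal is correct and follows the paper's proof: the constant vector $\frac{1}{\g(M^*)}\chi_E$ is feasible for $\SP(I(M))$. The paving case then comes down to comparing $\g(M^*)$ with $\rk(M^*)$; the paper invokes non-uniformity to get $\g(M^*)=\rk(M^*)$, whereas you correctly note that the inequality $\g(M^*)\ge\rk(M^*)$ from the definition of paving already suffices. One harmless slip: the standing assumption $\rk(M)<|E|$ guarantees that $M$ has a circuit, not $M^*$ ($M^*$ has one iff $\rk(M)\ge 1$), but the statement already presupposes that $\g(M^*)$ is finite.
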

\begin{proof}
We claim that $\tfrac{1}{\g(M^*)}\chi_E \in \SP(I(M))$. Let $C \in \mathcal{C}(M^*)$. Then, we have $\chi_{C} \cdot \tfrac{1}{\g(M^*)}\chi_E= \tfrac{|C|}{\g(M^*)} \geq 1$ as $|C| \geq \g(M^*)$. Therefore, $\tfrac{1}{\g(M^*)}\chi_E \in \SP(I(M))$, and hence, by \Cref{thm:SPWaldschmidt},  $\widehat{\alpha}(I(M)) \leq  \tfrac{|E|}{\g(M^*)}$.

Next, if $M^*$ is a non-uniform paving matroid, then $\g(M^*)=\rk(M^*)$. This completes the proof.
\end{proof}

The \emph{circumference} of a matroid $M$, denoted by $\cu(M)$, is the maximum size of a circuit in $M$. A matroid $M$ is said to be \emph{Hamiltonian} if it contains a circuit of size $\rk(M)+1$, or equivalently, if $\cu(M)=\rk(M)+1$.

\begin{Theorem}\label{thm:Hamil-Mat-ub}
Let $M$ be a matroid on the ground set $E$. Then, $\widehat{\alpha}(I(M)) \leq \rk(M)+1-\frac{\cu(M)}{2}$.  In particular, if $M$ is Hamiltonian, then $\widehat{\alpha}(I(M)) \leq \frac{\cu(M)}{2}$. 
\end{Theorem}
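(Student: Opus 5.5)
The plan is to exhibit an explicit point of $\SP(I(M))$ whose coordinate sum is $\rk(M)+1-\frac{\cu(M)}{2}$, and then apply \Cref{thm:SPWaldschmidt}. By \Cref{cor:SPIM}, membership in $\SP(I(M))$ means that $\sum_{i\in D}a_i\ge 1$ for every circuit $D$ of $M^*$, that is, for every cocircuit $D$ of $M$. Two standard matroid facts will be used:
\begin{itemize}
\item every cocircuit is the complement of a hyperplane, so it meets every basis;
\item the orthogonality property: a circuit and a cocircuit never intersect in exactly one element.
\end{itemize}

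\emph{Construction.} Let $k=\rk(M)$ and fix a circuit $C$ with $|C|=c=\cu(M)$. Pick any $e\in C$. The set $C\setminus\{e\}$ is independent, so we extend it to a basis $B$. Since $C$ is dependent, $e\notin B$, and therefore $|B\setminus C|=k-(c-1)$. Define ${\bf a}\in\RR^E_{\ge 0}$ by
\begin{itemize}
\item $a_i=\tfrac12$ for $i\in C$,
\item $a_i=1$ for $i\in B\setminus C$,
\item $a_i=0$ otherwise.
\end{itemize}
The coordinate sum is $\frac c2+k-c+1=k+1-\frac c2$.

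\emph{Verification.} Let $D$ be a cocircuit of $M$. We check $\sum_{i\in D}a_i\ge 1$ in three cases.
\begin{itemize}
\item If $D$ meets $B\setminus C$, the sum is already at least $1$.
\item If $D$ meets $C$, orthogonality forces $|D\cap C|\ge 2$, so the sum is at least $2\cdot\tfrac12=1$.
\item Otherwise $D$ is disjoint from $(B\setminus C)\cup C\supseteq B$. Then $B$ lies in the hyperplane $E\setminus D$, which is impossible since a hyperplane has rank $k-1$.
\end{itemize}
Hence ${\bf a}\in\SP(I(M))$, and \Cref{thm:SPWaldschmidt} gives $\widehat{\alpha}(I(M))\le k+1-\frac{c}{2}$.

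\emph{Hamiltonian case and degenerate cases.} If $M$ is Hamiltonian, then $c=k+1$, and the bound becomes $k+1-\frac{k+1}{2}=\frac{k+1}{2}=\frac{\cu(M)}{2}$. The standing assumption $\rk(M)<|E|$ guarantees that circuits exist, so $\cu(M)$ is defined. Loops cause no trouble: if $C$ is a loop, then $c=1$, and the same construction still works.

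The only step needing care is the case where $D$ meets $C$. There one must recall orthogonality ($|C\cap D|\ne 1$), which is exactly why weight $\tfrac12$ on the circuit suffices.
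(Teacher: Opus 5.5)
Your proof is correct, but it takes a more direct route than the paper. Both proofs start from the same data: a longest circuit $C$, an element $e\in C$, and a basis $B\supseteq C\setminus\{e\}$.

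The paper then argues structurally. The restriction $N$ of $M$ to $B\cup\{e\}$ has $C$ as its unique circuit, so $N\cong \mathrm{U}_{c-1,c}\oplus \mathrm{U}_{r,r}$ with $r=\rk(M)-c+1$. The paper imports $\widehat{\alpha}(I(\mathrm{U}_{c-1,c}))=c/2$ from \cite{DFK26} and adds Waldschmidt constants over the direct sum via \Cref{thm:direct-sum}, which rests on \cite{JKM22}. It concludes from $I(N)\subseteq I(M)$, tacitly using that $\widehat{\alpha}$ is monotone under containment of squarefree monomial ideals.

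You bypass all three of these ingredients. You write down a feasible point of $\SP(I(M))$ and check the cocircuit inequalities using only orthogonality and the fact that every cocircuit meets every basis.

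For $c\ge 2$, your vector is exactly an optimal point of $\SP(I(N))$ padded with zeros. So the two proofs are two readings of one construction: yours certifies the bound with an explicit feasible point, and the paper's identifies a full-rank restriction whose Waldschmidt constant is already known.

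Your version is self-contained and shows clearly why weight $\tfrac12$ on $C$ suffices. The paper's version identifies the bound as the exact value $\widehat{\alpha}(I(N))$ for a specific restriction of $M$. It also reuses results on uniform matroids and direct sums that the paper needs elsewhere anyway.
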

\begin{proof}
Let $C$ be a circuit of $M$ with $|C|=\cu(M)$ and fix $e\in C$. Since $C$ is a minimal dependent set, $C\setminus \{e\}$ is independent. Thus, there exists a basis $B$ of $M$
 such that $C\setminus {e}  \subseteq B$. Observe that $C$ is the fundamental circuit of $B$ with respect to $e$, that is,  
$C$ is the unique circuit contained in $B\cup \{e\}$.

Set $N=M\setminus (E\setminus (B \cup \{e\}))$ and $N'=M\setminus (E \setminus C)$.  Then $N$ is a matroid on the ground set $B\cup \{e\}$ with the same rank as $M$ and
$\mathcal{C}(N)=\{C\}$, while $N'$ is a rank $|C|-1$ matroid on the ground set $C$ with $\mathcal{C}(N')=\{C\}$. In particular, $N'=\mathrm{U}_{|C|-1,|C|}$.   By \cite[Proposition 4.2]{DFK26}, $\widehat{\alpha}(I(N')) = \frac{|C|}{2}=\frac{\cu(M)}{2}$.  (The Waldschmidt constant of the facet ideal of a uniform matroid is a well-known result -- see \cite{BH10,BCSGHJNSVV2016} -- but in the literature apart from \cite{DFK26} it is stated for the Stanley-Reisner ideal).  Moreover, $N=N' \oplus \mathrm{U}_{r,r},$ where $r=\rk(M)-\cu(M)+1$.  Thus, by \Cref{thm:direct-sum} and \cite[Proposition 4.2]{DFK26}, $\widehat{\alpha}(I(N)) =\widehat{\alpha}(I(N'))+\widehat{\alpha}(I(\mathrm{U}_{r,r})) = \frac{\cu(M)}{2}+r= \rk(M)+1-\frac{\cu(M)}{2}$.

Finally, since $I(N)\subseteq I(M)$, we have $\widehat{\alpha}(I(M)) \leq \widehat{\alpha}(I(N)) \leq \rk(M)+1-\frac{\cu(M)}{2}$. Moreover, if $M$ is Hamiltonian, then $\cu(M)=\rk(M)+1$, which completes the proof.
\end{proof}

The next lemma is an immediate consequence of \cite[Theorem~1.2]{BocciCooperGuardoHarbourneJanssenNagelSeceleanuVanTuylVu}.

\begin{Lemma}\label{lem:Chudnovsky}
Let $M$ be a matroid on the ground set $E$. Then, $
\widehat{\alpha}(I(M))\ge \frac{\rk(M)+\cu(M^*)-1}{\cu(M^*)}
$.
\end{Lemma}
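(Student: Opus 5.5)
The plan is to specialize the Chudnovsky-type lower bound of \cite[Theorem~1.2]{BocciCooperGuardoHarbourneJanssenNagelSeceleanuVanTuylVu} to $I(M)$. As I recall it, that result says that for a squarefree monomial ideal $I$ whose minimal primes have maximal height $h$ (the \emph{big height} of $I$),
\[
\widehat{\alpha}(I)\ \ge\ \frac{\alpha(I)+h-1}{h}.
\]
So the argument reduces to computing the two invariants $\alpha(I(M))$ and $h$ in matroid terms.

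First, $I(M)=\langle x^B : B\in\cB(M)\rangle$ is generated by squarefree monomials. Every basis has cardinality $\rk(M)$, so every generator has degree $\rk(M)$. Hence $\alpha(I(M))=\rk(M)$.

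Second, we use the primary decomposition $I(M)=\bigcap_{C\in\C(M^*)}P_C$ recalled in \Cref{sec:Background}. The circuits of $M^*$ form an antichain, so this intersection is irredundant and $\mathrm{Min}(I(M))=\{P_C : C\in\C(M^*)\}$. Each $P_C$ is generated by $|C|$ variables and therefore has height $|C|$. The big height is thus $\max\{|C| : C\in\C(M^*)\}=\cu(M^*)$. This set is nonempty because of the standing assumption $\rk(M)<|E|$: it gives $\rk(M^*)=|E|-\rk(M)\ge 1$, and $M^*$ has at least one circuit unless $E$ is itself independent in $M^*$. That degenerate case would force $M$ to have only the empty basis, contradicting $\rk(M)<|E|$ only when $\rk(M)\ge 1$. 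If it ever occurred, it could be handled separately, or excluded since $I(M)$ would then be the unit ideal.

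Substituting $\alpha(I(M))=\rk(M)$ and $h=\cu(M^*)$ into the cited bound gives $\widehat{\alpha}(I(M))\ge \frac{\rk(M)+\cu(M^*)-1}{\cu(M^*)}$, which is the claim. The only point needing care is checking that the hypotheses of \cite[Theorem~1.2]{BocciCooperGuardoHarbourneJanssenNagelSeceleanuVanTuylVu} apply: that it is stated for arbitrary squarefree monomial ideals (no characteristic restriction), and that its $h$ is the big height rather than the minimal height. I expect this identification to be the only real obstacle. If the cited result is instead phrased via the symbolic polyhedron, I would use \Cref{thm:SPWaldschmidt} and \Cref{cor:SPIM} to translate it into the same statement.
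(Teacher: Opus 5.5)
Your proposal is correct and is the paper's argument: apply the Chudnovsky-type bound of \cite[Theorem~1.2]{BocciCooperGuardoHarbourneJanssenNagelSeceleanuVanTuylVu}, with $\alpha(I(M))=\rk(M)$ and with big height $\cu(M^*)$ read off from the decomposition $I(M)=\bigcap_{C\in\C(M^*)}P_C$. Your degenerate-case aside is slightly garbled: $\rk(M)<|E|$ does not rule out $\rk(M)=0$, where $M^*$ has no circuits and $I(M)$ is the unit ideal. This is harmless, since the paper also tacitly ignores that trivial case.
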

\begin{proof}
It follows from~\cite[Theorem~1.2]{BocciCooperGuardoHarbourneJanssenNagelSeceleanuVanTuylVu}, that the Waldschmidt constant of a squarefree monomial ideal $I$ satisfies the Chudnovsky-like lower bound
$
\widehat{\alpha}(I)\ge \frac{\alpha(I)+e-1}{e},
$
where $e$ is the maximum height of a minimal prime of $I$.  The minimal primes of $I(M)$ are of the form $P_C=\langle x_i: i\in C\rangle$ where $C$ is a circuit of $M^*$.  So the maximum height of a minimal prime of $I(M)$ is the largest size of a circuit of $M^*$, which is $\cu(M^*)$.
\end{proof}

The following lemma is implicit in \cite{DFK26}.

\begin{Lemma}\label{lem:nonuniform}
Let $M$ be a non-uniform matroid on a ground set $E$. Then
\[
\widehat{\alpha}(I(M))\ge \min\left\lbrace\frac{|E|-1}{|E|-\rk(M)},\frac{|E|+1}{|E|-\rk(M)+1}\right\rbrace.
\]
\end{Lemma}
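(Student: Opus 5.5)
Write $n=|E|$, $k=\rk(M)$ and $m=n-k+1$. I would argue directly on the symbolic polyhedron: by \Cref{thm:SPWaldschmidt} and \Cref{cor:SPIM}, it suffices to show that every ${\bf a}\in \SP(I(M))$ has total weight $\sum_{i\in E}a_i$ at least the stated minimum. For $U\subseteq E$ write $a(U)=\sum_{i\in U}a_i$.

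\textbf{Step 1: two families of constraints.} The dual $M^*$ has rank $n-k$, so every $m$-subset of $E$ is dependent in $M^*$. Hence every $m$-subset contains a circuit of $M^*$, and since ${\bf a}\ge 0$ we get $a(T)\ge 1$ for every $T\subseteq E$ with $|T|=m$. Averaging these inequalities over all $m$-subsets only gives the uniform value $n/m$, so we need one more constraint. Since $M$ is non-uniform, so is $M^*$. Thus some $(n-k)$-subset is dependent in $M^*$, and so $M^*$ has a circuit $C_0$ with $s:=|C_0|\le n-k=m-1$. This gives the extra inequality $a(C_0)\ge 1$.

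\textbf{Step 2: split according to $n-s$ versus $m$.}

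\emph{Case $n-s\ge m$.} Average the inequality $a(T)\ge 1$ over all $m$-subsets $T\subseteq E\setminus C_0$; this gives $a(E\setminus C_0)\ge (n-s)/m$. Therefore
\[a(E)\ge 1+\frac{n-s}{m}\ge 1+\frac{n-m+1}{m}=\frac{n+1}{m},\]
which is the second term in the minimum.

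\emph{Case $n-s<m$.} Now consider the $m$-sets $T=(E\setminus C_0)\cup W$ with $W\subseteq C_0$ and $|W|=m-n+s$. Note that $0<m-n+s\le s$. Averaging over all such $W$ gives
\[a(E\setminus C_0)+\frac{m-n+s}{s}\,a(C_0)\ge 1.\]
Adding $\bigl(1-\frac{m-n+s}{s}\bigr)a(C_0)\ge \frac{n-m}{s}$, which follows from $a(C_0)\ge1$, we obtain
\[a(E)\ge 1+\frac{n-m}{s}=1+\frac{k-1}{s}\ge 1+\frac{k-1}{n-k}=\frac{n-1}{n-k},\]
which is the first term in the minimum.

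In both cases $a(E)$ is at least the minimum of the two quantities, and this proves the lemma.

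\textbf{Main obstacle.} The only real subtlety is that the uniform-type averaging alone gives just $n/(n-k+1)$. One has to use non-uniformity to produce a short circuit $C_0$ of $M^*$ and then average in a way adapted to $C_0$. The case split is forced by whether $E\setminus C_0$ is large enough to contain $m$-subsets. One should also check the degenerate bounds: the argument uses $k\ge1$, so that $n-m=k-1\ge 0$, and $s\ge1$. Both hold here since $\rk(M)<|E|$ and $M$ is non-uniform.
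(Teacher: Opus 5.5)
Your argument is correct, and it takes a different route from the paper's. The paper argues by containment. It fixes a $\rk(M)$-subset $C$ that is not a basis and observes $I(M)\subseteq J:=\langle x^U : |U|=\rk(M),\ U\neq C\rangle$, so that $\widehat{\alpha}(I(M))\ge \widehat{\alpha}(J)$. It then imports the exact value $\widehat{\alpha}(J)=\min\{\frac{|E|-1}{|E|-\rk(M)},\frac{|E|+1}{|E|-\rk(M)+1}\}$ from Proposition~5.5 of \cite{DFK26}. You instead work inside $\SP(I(M))$ and write down an explicit averaging (LP-dual) certificate. Take $C_0$ to be the dependent set $E\setminus C$ itself, so $s=n-k$. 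Then your constraints ($a(T)\ge 1$ for all $|T|=n-k+1$, plus $a(E\setminus C)\ge 1$) cut out exactly $\SP(J)$. Your case split $k\ge n-k+1$ versus $k<n-k+1$ is also exactly the split that decides which term of the minimum is smaller. So you are in effect reproving the lower-bound half of the cited proposition. The paper's route is shorter, and because it identifies the bound as $\widehat{\alpha}(J)$, it shows this is the best bound obtainable from a single non-basis. Your route is self-contained and gives slightly more. Keeping $s=|C_0|$ general (say $s=\g(M^*)$), you get $\widehat{\alpha}(I(M))\ge 1+\frac{n-s}{n-k+1}$ when $s\le k-1$, and $\widehat{\alpha}(I(M))\ge 1+\frac{k-1}{s}$ when $s\ge k$. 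This is never weaker than the lemma and can be strictly stronger. For example, for $M=\mathrm{U}_{2,2}\oplus\mathrm{U}_{1,3}$ it gives $7/3$ where the lemma gives $2$.
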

\begin{proof}
Since $M$ is non-uniform, there is a subset $C\subset E$ of size $\rk(M)$ that is not a basis of $M$.  Thus $I(M)\subset J$, where $J:=\langle x^U: U\subset E, |U|=\rk(M), U\neq C\rangle$.  It follows from \cite[Proposition~5.5]{DFK26} that $\widehat{\alpha}(J)=\min\left\lbrace\frac{|E|-1}{|E|-\rk(M)},\frac{|E|+1}{|E|-\rk(M)+1}\right\rbrace$.  Since $I(M)\subseteq J$, $\widehat{\alpha}(J)\le \widehat{\alpha}(I)$, which completes the proof.
\end{proof}

\begin{Remark}\label{rem:incomp1}
In general, the two bounds of \Cref{lem:Chudnovsky} and \Cref{lem:nonuniform} are incomparable.  For instance, if $M$ is a non-uniform matroid so that $\cu(M^*)=|E|-\rk(M)+1$, then \Cref{lem:nonuniform} gives a better lower bound.  On the other hand, suppose that $M$ is the matroid of the \textit{projective plane geometry} $PG(2,q)$ over a field of characteristic $q$ (see~\cite[Chapter~6.1]{Oxley2011}).  Then $\rk(M)=3$, $\cu(M^*)=q^2$, and $|E|=q^2+q+1$.  \Cref{lem:Chudnovsky} gives $\widehat{\alpha}(I(M))\ge\dfrac{q^2+2}{q^2}$ while \Cref{lem:nonuniform} gives $\widehat{\alpha}(I(M))\ge\dfrac{q^2+q}{q^2+q-2}$, so the lower bound from \Cref{lem:Chudnovsky} is better.  In fact, $\widehat{\alpha}(I(M))=\dfrac{q^2+q+1}{q^2}$ by \cite[Proposition~6.2 and Example~6.5]{DFK26}.
\end{Remark}

\begin{Theorem}\label{thm:AsymptoticResurgenceBoundsMatroid}
Let $M$ be a matroid on a ground set $E$.  Then
\[
2-\frac{2}{\cu(M)}\le \widehat{\rho}(I(M))\le \frac{\rk(M)\cu(M^*)}{\rk(M)+\cu(M^*)-1}.
\]
\end{Theorem}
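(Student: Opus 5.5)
Both bounds reduce to results stated earlier. The lower bound comes from restricting to a largest circuit, and the upper bound comes from the contraction formula of \Cref{thm:asym-res-facet} combined with \Cref{lem:Chudnovsky}.

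\emph{Lower bound.} Let $C$ be a circuit of $M$ with $|C|=\cu(M)$. Deleting $E\setminus C$ gives the restriction $N'=M\backslash(E\setminus C)$. As in the proof of \Cref{thm:Hamil-Mat-ub}, $N'=\mathrm{U}_{|C|-1,|C|}$. By \Cref{prop:minors}, $\widehat{\rho}(I(M))\ge \widehat{\rho}(I(N'))$, so it suffices to bound the uniform matroid. The generators of $I(N')$ have degree $\alpha(I(N'))=|C|-1$, and \cite[Proposition~4.2]{DFK26} gives $\widehat{\alpha}(I(N'))=|C|/2$. The GHV lower bound (or the first term in \Cref{thm:asym-res-facet} with $F=\emptyset$, since $\rk(N')/\widehat{\alpha}(I(N'))$ appears in the maximum) then gives
\[
\widehat{\rho}(I(N'))\ge \frac{|C|-1}{|C|/2}=2-\frac{2}{\cu(M)}.
\]

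\emph{Upper bound.} By \Cref{thm:asym-res-facet}, $\widehat{\rho}(I(M))$ is the maximum of $\rk(M/F)/\widehat{\alpha}(I(M/F))$ over proper flats $F$. For each such $F$, I apply \Cref{lem:Chudnovsky} to $M/F$ to get
\[
\frac{\rk(M/F)}{\widehat{\alpha}(I(M/F))}\le \frac{\rk(M/F)\,\cu((M/F)^*)}{\rk(M/F)+\cu((M/F)^*)-1}.
\]
Two monotonicity facts then finish the argument.
\begin{enumerate}
\item $(M/F)^*=M^*\backslash F$ is a deletion of $M^*$. Its circuits are exactly the circuits of $M^*$ avoiding $F$, so $\cu((M/F)^*)\le \cu(M^*)$.
\item Write $f(k,c)=\frac{kc}{k+c-1}$. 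Then $\partial f/\partial k = c(c-1)/(k+c-1)^2\ge 0$, and symmetrically $\partial f/\partial c\ge0$, for $k,c\ge1$.
\end{enumerate}
Since $\rk(M/F)\le \rk(M)$, each term is at most $f(\rk(M),\cu(M^*))$.

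The main obstacle is the degenerate contractions, where $(M/F)^*$ may have no circuits at all, or where \Cref{lem:Chudnovsky} is applied with the convention $\cu=0$. I will check these separately. Contracting a proper flat keeps $M/F$ loopless, but $(M/F)^*$ could be free, in which case $I(M/F)$ is the principal ideal $x^{E\setminus F}$. Then $\widehat{\alpha}=\rk(M/F)$, the ratio equals $1$, and $1\le f(\rk(M),\cu(M^*))$ whenever $\cu(M^*)\ge1$. This holds because we assume $\rk(M)<|E|$, so $M^*$ has positive rank and is not free, hence has a circuit.
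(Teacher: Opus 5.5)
Your proposal is correct and is essentially the paper's proof. The lower bound uses the restriction $M\backslash(E\setminus C)=\mathrm{U}_{|C|-1,|C|}$ to a largest circuit together with \Cref{prop:minors}; you only need $\widehat{\alpha}$ of this uniform matroid plus the GHV bound, where the paper quotes $\widehat{\rho}$ directly. The upper bound uses \Cref{thm:asym-res-facet}, \Cref{lem:Chudnovsky} applied to $M/F$, $\cu(M^*\backslash F)\le \cu(M^*)$, and monotonicity of $kc/(k+c-1)$.

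Your last paragraph is unnecessary and mislabels the cases. Since $\rk(M/F)\ge 1$ for a proper flat $F$, the dual $(M/F)^*$ is never free. The principal case $I(M/F)=\langle x^{E\setminus F}\rangle$ occurs when $M/F$ itself is free, i.e.\ $(M/F)^*$ consists of loops and $\cu((M/F)^*)=1$, which the general Chudnovsky step already handles. Also, $\rk(M)<|E|$ gives a circuit of $M$; a circuit of $M^*$ comes instead from $\rk(M)\ge 1$.
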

\begin{proof}
For the lower bound, pick a circuit $C$ of $M$ so that $|C|=\cu(M)$.  Then delete all elements of the ground set outside of the circuit to get the matroid $N=M\backslash(E\backslash C)$.  As we saw in the proof of \Cref{thm:Hamil-Mat-ub}, $N$ is the uniform matroid $\mathrm{U}_{|C|-1,|C|}$.  It follows from \cite[Proposition~4.2]{DFK26} that $\widehat{\rho}(I(N))=\frac{2(|C|-1)}{|C|}=2-\frac{2}{|C|}=2-\frac{2}{\cu(M)}$ (again, this is well-known but usually stated for the Stanley-Reisner ideal of a uniform matroid - see \cite{MG18}).  Now the lower bound follows from \Cref{prop:minors}.

For the upper bound, we use \Cref{thm:asym-res-facet}.  Let $F$ be any flat of $M$.  It follows from \Cref{lem:Chudnovsky} that
\[
\frac{\rk(M/F)}{\widehat{\alpha}(I(M/F))}\le \frac{\rk(M/F)\cu((M/F)^*)}{\rk(M/F)+\cu((M/F)^*)-1}=\frac{\rk(M/F)\cu(M^*\backslash F)}{\rk(M/F)+\cu(M^*\backslash F)-1}
\]
Clearly, $\rk(M/F)\le \rk(M)$.  Circuits of a deletion of a matroid are also circuits of the original matroid.  Hence $\cu(M^*\backslash F)\le \cu(M^*)$.  Put $r=\rk(M)$ and $c=\cu(M^*)$.  Suppose that $1\le r'\le r$ and $1\le c' \le c$.  Then,  it is straightforward to check that
\[
\frac{rc}{r+c-1}\geq\frac{rc'}{r+c'-1}\ge \frac{r'c'}{r'+c'-1}.
\]
By \Cref{thm:asym-res-facet}, $\widehat{\rho}(I(M))\le \frac{rc}{r+c-1}$, as desired.
\end{proof}

\begin{Remark}\label{rem:incomp2}
The upper bound on $\widehat{\rho}(I(M))$ in \Cref{thm:AsymptoticResurgenceBoundsMatroid} is, in general, incomparable to the upper bound in \Cref{prop:asym-upper-bound}.  As in \Cref{rem:incomp1}, if $M$ is a non-uniform matroid so that $\cu(M^*)=|E|-\rk(M)+1$, then the upper bound in \Cref{prop:asym-upper-bound} is better than the upper bound in \Cref{thm:AsymptoticResurgenceBoundsMatroid}.  However, if $M$ is the matroid of the projective plane geometry $PG(2,q)$, so $\rk(M)=3$, $\cu(M^*)=q^2$, and $|E|=q^2+q+1$, then \Cref{thm:AsymptoticResurgenceBoundsMatroid} yields $\widehat{\rho}(I(M))\le \frac{3q^2}{q^2+2}$ while \Cref{prop:asym-upper-bound} yields $\widehat{\rho}(I(M))\le \frac{3(q^2+q-2)}{q^2+q}$.  If $q>2$, then the bound from \Cref{thm:AsymptoticResurgenceBoundsMatroid} is slightly better.  In fact, $\widehat{\rho}(I(M))=\frac{3q^2}{q^2+q+1}$ by \cite[Example~6.5]{DFK26}.
\end{Remark}

\begin{Remark}\label{rem:refinement}
It is curious to compare the upper bound on $\widehat{\rho}(I(M))$ in \Cref{thm:AsymptoticResurgenceBoundsMatroid} to two bounds in the literature: the celebrated uniform containment bound ~\cite{ELS01,HH02,MS17} implies that $\widehat{\rho}(I(M))\le \cu(M^*)$ and a bound for squarefree monomial ideals \cite[Theorem~3.18]{DFMS19} or \cite[Corollary~3.6]{HT-2019} implies that $\widehat{\rho}(I(M))\le \rk(M)$.  The upper bound in \Cref{thm:AsymptoticResurgenceBoundsMatroid} is a simultaneous refinement of both of these.
\end{Remark}

In the remainder of the paper, we shift our attention to graphic matroids, where we significantly improve the upper bounds from \Cref{prop:asym-upper-bound} and \Cref{thm:AsymptoticResurgenceBoundsMatroid} -- see \Cref{thm:asym-gr-mat-up} and \Cref{rem:AsymptoticComparison}.

\section{Waldschmidt constant and asymptotic resurgence of facet ideals of graphic matroids}\label{sec:graphic-matroids}
In this section, we study the Waldschmidt constant and asymptotic resurgence of facet ideals of graphic matroids. We establish bounds for these invariants (\Cref{thm:grap-mat-wald-lb} and  \Cref{thm:asym-gr-mat-up}) in terms of graph-theoretic parameters and show that the bounds are asymptotically sharp. In particular, since almost every graph is Hamiltonian as the number of vertices tends to infinity (see \cite{BB83}),  asymptotically almost every graphic matroid attains these bounds. We begin by recalling the graph-theoretic terminology and the definitions of graphic matroids and their dual matroids that we will use throughout the section.

Let $G$ be a finite graph with the vertex set $V(G)$ and the edge set $E(G)$. A \emph{cycle} in $G$ is a closed walk in which no vertex (and hence no edge) is repeated except for the initial and terminal vertex. A \emph{forest} is an acyclic graph, and a \emph{tree} is a connected forest. If $G$ is connected, a \emph{spanning tree} is a connected, acyclic subgraph with vertex set $V(G)$. Any spanning tree $T$ satisfies $|E(T)| = |V(G)| - 1$. More generally, if $G$ has connected components $G_1,\dots, G_r$, a \emph{spanning forest} $F$ is the disjoint union of spanning trees of $G_i$'s, and hence satisfies $|E(F)| = |V(G)| - r$. A graph $G$ is \emph{simple} if it has no loops or parallel edges, and \emph{$2$-connected} if it remains connected after deletion of any vertex.

Let $G$ be a graph and let $e=\{u,v\}\in E(G)$. To \emph{contract} the edge $e$, we identify the vertices $u$ and $v$ and delete the edge $e$. We denote the resulting graph by $G/e$ and call it the \emph{edge contraction} of $G$ along $e$. Edge contraction does not necessarily preserve simplicity. In particular, if $e$ lies on a triangle of $G$, then contracting $e$ creates parallel edges, even when $G$ is simple. Moreover, edge contraction does not preserve $2$-connectivity in general. We denote by $\si(G/e)$ the simplification of $G/e$, obtained by replacing each set of parallel edges with a single edge and deleting loops.

The \emph{graphic matroid} $M(G)$ is the matroid on the ground set $E(G)$ whose independent sets are the subsets of $E(G)$ that form acyclic subgraphs of $G$. The bases of $M(G)$ are the spanning forests of $G$, and in particular the spanning trees when $G$ is connected. The circuits of $M(G)$ are precisely edge sets of cycles in $G$. If $G$ is connected, then $\rk(M(G)) = |V(G)| - 1$, and in general $\rk(M(G)) = |V(G)| - r$, where $r$ is the number of connected components of $G$. Note that $M(G)$ is connected if and only if $G$ is $2$-connected, and $M(G)$ is simple if and only if $G$ is simple.

The \emph{cographic matroid} $M^*(G)$, also called the \emph{bond matroid} or \emph{cocycle matroid}, is the dual of $M(G)$. Its bases are the complements of the bases of $M(G)$. The circuits of $M^*(G)$ correspond to minimal edge cuts of $G$, called \emph{bonds}. An edge cut is a set of edges whose removal increases the number of connected components, and it is minimal if no proper subset has this property. For a vertex $v \in V(G)$, the set of edges incident to $v$ always forms an edge cut; when $G$ is $2$-connected, this set is minimal and hence defines a circuit of $M^*(G)$, called a \emph{vertex bond}.  If $G$ is not $2$-connected, the set of edges incident to any cut vertex is not a minimal edge cut and hence is not a bond. In light of the reduction results in \Cref{sec:reduction-techniques}, we will only consider simple $2$-connected graphs.

\subsection{Waldschmidt Constant of Facet Ideals of  Graphic Matroids}-
Recall that if $M=M(G)$ is the graphic matroid of a graph $G$, then $\g(M)=\g(G)$, where $\g(G)$ denotes the \emph{girth} of the graph $G$, namely, the length of a shortest cycle in $G$. Dually, if $M=M^*(G)$ is the cographic matroid of $G$, then $\g(M)=\lambda(G)$, where $\lambda(G)$ is the \emph{edge-connectivity} of $G$, that is, the minimum size of a bond of $G$. For a graph $G$, let $\delta(G)$ denote the minimum degree of a vertex of $G$.  A graph is called \emph
{regular} if all its vertices have the same degree. Finally, recall that $\cu(G)$ denotes the circumference of $G$, that is the longest length of a (simple) cycle in $G$.

\begin{Corollary}\label{cor:wald-up-gra}
\begin{enumerate}
    \item If $M=M(G)$ is the graphic matroid of a connected graph $G$, then
    $\widehat{\alpha}(I(M)) \leq \frac{|E(G)|}{\lambda(G)}$.
    \item If $M=M^*(G)$ is the cographic matroid of a graph $G$, then $\widehat{\alpha}(I(M)) \leq \frac{|E(G)|}{\g(G)}$.
\end{enumerate}
\end{Corollary}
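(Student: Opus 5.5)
This corollary is a direct specialization of \Cref{thm:wald-gen-up}, which states that $\widehat{\alpha}(I(M)) \le \frac{|E|}{\g(M^*)}$ for any matroid $M$ on a ground set $E$. The plan is to identify the girth of the dual matroid in each of the two cases and substitute it into that inequality.

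For part (a), we take $M = M(G)$, so $E = E(G)$ and $M^* = M^*(G)$ is the cographic matroid. Its circuits are the bonds of $G$, that is, the minimal edge cuts. Hence $\g(M^*)$ is the minimum size of a bond, which is the edge-connectivity $\lambda(G)$, as recalled just before the statement. Substituting into \Cref{thm:wald-gen-up} gives $\widehat{\alpha}(I(M)) \le \frac{|E(G)|}{\lambda(G)}$.

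One point needs a brief check: the minimum size of an edge cut whose removal disconnects $G$ equals the minimum size of a bond. Any minimum disconnecting set is automatically minimal, hence a bond, and this is where connectivity of $G$ is used.

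For part (b), we take $M = M^*(G)$. Then $M^* = (M^*(G))^* = M(G)$, whose circuits are the edge sets of cycles of $G$. Therefore $\g(M^*) = \g(G)$, and \Cref{thm:wald-gen-up} gives $\widehat{\alpha}(I(M)) \le \frac{|E(G)|}{\g(G)}$.

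Only minor degenerate cases need attention, and no step is a real obstacle. The hypothesis $\rk(M) < |E|$ is assumed throughout the paper, so $M^*$ has at least one circuit and the girth is finite. In part (b), if $G$ were acyclic then $M^*(G)$ would have rank $|E|$, which that standing assumption excludes. In part (a), a connected $G$ with at least one edge has a bond, so $\lambda(G) \ge 1$ and the bound is meaningful.
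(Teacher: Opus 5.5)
Your argument is the paper's proof: you identify $\g(M(G)^*)=\lambda(G)$ and $\g(M^*(G)^*)=\g(G)$, then substitute into \Cref{thm:wald-gen-up}.

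One correction to your degenerate-case paragraph, which the proof itself does not need. The standing assumption $\rk(M)<|E|$ guarantees that $M$ has a circuit, not that $M^*$ does. For example, $M=\mathrm{U}_{0,n}$ satisfies it, but $M^*=\mathrm{U}_{n,n}$ has no circuits; what you actually need is $\rk(M)>0$.

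In particular, for acyclic $G$ it is $M(G)$, not $M^*(G)$, that has rank $|E|$; in fact $\rk(M^*(G))=0$. So acyclic $G$ is not excluded in (b). In that case $\g(G)=\infty$ and $I(M)$ is the unit ideal, a degenerate case the paper also leaves tacit. The same caveat applies to your remark in (a): a bond exists only when $G$ has a non-loop edge.
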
 
\begin{proof}
If $M=M(G)$, then $M^*$ is the cographic matroid of $G$, so $\g(M^*)=\lambda(G)$. If $M=M^*(G)$, then $M^*=M(G)$, and hence $\g(M^*)=\g(G)$. Therefore, the proof immediately follows from \Cref{thm:wald-gen-up}.
\end{proof}

\begin{Theorem}\label{thm:grap-mat-wald-lb}
Let $G$ be a $2$-connected simple graph, and let $M=M(G)$ be its graphic matroid. Then:
\begin{enumerate}
    \item $\frac{|V(G)|}{2} \leq \widehat{\alpha}(I(M)) \leq |V(G)|-\frac{\cu(G)}{2}$.
    \item If $G$ is Hamiltonian, then $\widehat{\alpha}(I(M))=\frac{|V(G)|}{2}$.
    \item If $G$ is regular and $\lambda(G)=\delta(G)$, then $\widehat{\alpha}(I(M))
    =\frac{|V(G)|}{2}
    =\frac{|E(G)|}{\lambda(G)}$.
\end{enumerate}
\end{Theorem}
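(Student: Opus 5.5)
The plan is to obtain the lower bound in (a) from the symbolic polyhedron and to derive everything else from results already established.

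\emph{Lower bound.} By \Cref{thm:SPWaldschmidt} and \Cref{cor:SPIM}, $\widehat{\alpha}(I(M))$ is the minimum of $\sum_{e\in E(G)} a_e$ over $\mathbf{a}\ge 0$ satisfying $\chi_C\cdot\mathbf{a}\ge 1$ for every bond $C$ of $G$. Since $G$ is $2$-connected, each vertex star $\delta(v)$ (the set of edges incident to $v$) is a bond. We keep only these $|V(G)|$ inequalities, which can only enlarge the feasible region, and sum them over all $v\in V(G)$. Each edge has two endpoints, so it is counted exactly twice, giving
\[
2\sum_{e} a_e=\sum_{v\in V(G)}\chi_{\delta(v)}\cdot\mathbf{a}\ge |V(G)|.
\]
Hence $\widehat{\alpha}(I(M))\ge |V(G)|/2$.

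\emph{Upper bound in (a).} Apply \Cref{thm:Hamil-Mat-ub} with $\rk(M)=|V(G)|-1$ and $\cu(M)=\cu(G)$. This gives
\[
\widehat{\alpha}(I(M))\le |V(G)|-1+1-\frac{\cu(G)}{2}=|V(G)|-\frac{\cu(G)}{2}.
\]

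\emph{Part (b).} If $G$ is Hamiltonian then $\cu(G)=|V(G)|$, so the two bounds in (a) coincide and equal $|V(G)|/2$.

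\emph{Part (c).} Suppose $G$ is $d$-regular with $\lambda(G)=\delta(G)=d$. Then $|E(G)|=d|V(G)|/2$, so $|E(G)|/\lambda(G)=|V(G)|/2$. \Cref{cor:wald-up-gra}(a) gives $\widehat{\alpha}(I(M))\le |E(G)|/\lambda(G)=|V(G)|/2$, and the lower bound from (a) gives the reverse inequality, so equality holds throughout. As a sanity check, the feasible point behind this is $\mathbf{a}=\frac1d\chi_E$: every bond has size at least $\lambda(G)=d$, so $\mathbf{a}$ satisfies all constraints.

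The only step needing care is the lower bound. There we must confirm that vertex stars are genuinely circuits of $M^*$, which is exactly where $2$-connectivity is used. Simplicity is not needed for this argument, since parallel edges still contribute to both endpoint stars. The remaining steps are direct substitutions into earlier results.
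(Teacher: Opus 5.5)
Your proof is correct and follows the paper's argument essentially step for step: the lower bound comes from summing the vertex-bond inequalities (each edge counted twice), the upper bound from \Cref{thm:Hamil-Mat-ub} with $\rk(M)=|V(G)|-1$, part (b) from $\cu(G)=|V(G)|$, and part (c) from the handshake identity $|E(G)|/\lambda(G)=|V(G)|/2$ combined with the girth bound $\widehat{\alpha}(I(M))\le |E(G)|/\g(M^*)$ (you cite this via \Cref{cor:wald-up-gra}, while the paper invokes \Cref{thm:wald-gen-up} directly). Your side remark is also accurate: the double-counting step needs only looplessness, not full simplicity.
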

\begin{proof}
(a) We use \Cref{thm:SPWaldschmidt} to prove that $\widehat{\alpha}(I(M)) \geq \frac{|V(G)|}{2}$. By \Cref{cor:SPIM}, the symbolic polyhedron $\SP(I(M))$ is defined by the inequalities
$\chi_C \cdot \mathbf{a}  = \sum\limits_{i \in C} a_i \geq 1$ for all  $C \in \mathcal{C}(M^*)$. Since the circuits of $M^*$ are precisely the bonds of $G$, these inequalities may be written as $\chi_B \cdot  \mathbf{a} = \sum\limits_{i \in B} a_i \geq 1$ for all bonds $B \subseteq E(G)$. In particular, this holds for all vertex bonds. As $G$ is $2$-connected, the set of edges incident to any vertex forms a bond.  Moreover, in a simple graph, each edge is incident to exactly two vertices. Therefore, by adding the inequalities corresponding to all vertex bonds, we get
\[ 2\cdot \sum\limits_{e \in E(G)} a_e \geq |V(G)|.\] This implies $\sum\limits_{e \in E(G)} a_e \geq \frac{|V(G)|}{2}$. Hence, by \Cref{thm:SPWaldschmidt},  $\widehat{\alpha}(I(M)) \geq \frac{|V(G)|}{2}$. The upper bound follows from \Cref{thm:Hamil-Mat-ub}, and the fact that $\rk(M)=|V(G)|-1$ and $\cu(M)=\cu(G)$. 

(b) If $G$ is Hamiltonian, $\cu(G)=|V(G)|$, so $\widehat{\alpha}(I(M))=\frac{|V(G)|}{2}$ by (a).

(c) Suppose that $G$ is regular and $\lambda(G)=\delta(G)$. Since $G$ is regular, by the handshaking lemma, \[2|E(G)|=\sum\limits_{v\in V(G)} \deg(v)=\delta(G)|V(G)|=\lambda(G)|V(G)|,\] so $\frac{|E(G)|}{\lambda(G)}= \frac{|V(G)|}{2}$. By \Cref{thm:wald-gen-up}, $\widehat{\alpha}(I(M)) \leq  \frac{|E(G)|}{\lambda(G)}=\frac{|V(G)|}{2}$.
Together with part~(a), this yields $\widehat{\alpha}(I(M)) =\frac{|V(G)|}{2}=\frac{|E(G)|}{\lambda(G)}$. This completes the proof.
\end{proof}

\begin{Remark}
The lower bound in \Cref{thm:grap-mat-wald-lb}~(a) improves on the lower bounds in \Cref{lem:Chudnovsky} and \Cref{lem:nonuniform}.  We compare the subsequent bounds on asymptotic resurgence in \Cref{rem:AsymptoticComparison}.

The upper bound in \Cref{thm:grap-mat-wald-lb}~(a) is sharp, as it is attained by graphic matroids of Hamiltonian graphs. In the next section, we show that the graphic matroids of Theta graphs $\Theta_{2,b,c}$ (\Cref{theta-graphs}) and Jahangir graphs (\Cref{jahangir-graphs}) also realize this bound.

We also point out in \Cref{ex:cubic-planar-lambda-two} that the hypothesis $\lambda(G)=\delta(G)$ in part~(c) of \Cref{thm:grap-mat-wald-lb} is essential. Regularity alone does not force equality in the lower bound.
\end{Remark}

The next result shows that if a graph $G$ achieves the lower bound in \Cref{thm:grap-mat-wald-lb}, then any graph formed by adding edges to $G$ (without increasing the number of vertices) also achieves the lower bound in \Cref{thm:grap-mat-wald-lb}.

\begin{Corollary}
Let $G$ be a simple $2$-connected graph.  Suppose $H$ is a spanning subgraph of $G$ so that $\widehat{\alpha}(I(M(H)))=\frac{|V(G)|}{2}$.  Then $\widehat{\alpha}(I(M(G))) = \frac{|V(G)|}{2}$.
\end{Corollary}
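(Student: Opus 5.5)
The plan is a sandwich argument: use the lower bound of \Cref{thm:grap-mat-wald-lb}(a) for $G$, and get the matching upper bound by comparing with $H$.

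For the lower bound, $G$ is simple and $2$-connected, so \Cref{thm:grap-mat-wald-lb}(a) gives $\widehat{\alpha}(I(M(G)))\ge \frac{|V(G)|}{2}$ directly. It remains to show $\widehat{\alpha}(I(M(G)))\le \widehat{\alpha}(I(M(H)))$.

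For the upper bound, I would argue through the symbolic polyhedron and \Cref{thm:SPWaldschmidt}, as in the proof of \Cref{thm:para-ext}. Take ${\bf u}\in\SP(I(M(H)))\subset\RR^{E(H)}_{\ge 0}$ attaining $\widehat{\alpha}(I(M(H)))=\frac{|V(G)|}{2}$. Extend it by zero on $E(G)\setminus E(H)$ to get $\widetilde{{\bf u}}\in\RR^{E(G)}_{\ge 0}$ with the same coordinate sum. By \Cref{cor:SPIM}, it suffices to check $\chi_B\cdot\widetilde{{\bf u}}\ge 1$ for every bond $B$ of $G$.

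The key point is that $H$ is spanning. Removing $B$ disconnects $G$, so $B$ is the set of edges of $G$ between the two sides $X$ and $V(G)\setminus X$ of some vertex partition, with both sides nonempty. Then $B\cap E(H)$ is the set of $H$-edges between $X$ and its complement.

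Is $H$ connected? If $H$ were disconnected, its spanning forests would have fewer than $|V(G)|-1$ edges. We would then have $\alpha(I(M(H)))<|V(G)|-1$, and $\widehat{\alpha}$ could be smaller, but that requires care. So I would treat connectivity of $H$ separately.
\begin{itemize}
\item If $H$ is connected, then $B\cap E(H)$ is a nonempty edge cut of $H$. It therefore contains a bond $B'$ of $H$, that is, a circuit of $M(H)^*$. Hence $\chi_B\cdot\widetilde{{\bf u}}\ge \chi_{B'}\cdot{\bf u}\ge 1$.
\item If $H$ is disconnected, some cut $B$ has $B\cap E(H)=\emptyset$ and the argument fails. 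I would try to rule this out. One might first reduce to $H$ being $2$-connected via \Cref{thm:direct-sum}(b), which gives additivity over blocks. But any component or block decomposition makes $\widehat{\alpha}$ a sum of per-piece lower bounds, each at least roughly half the vertex count. This may not reproduce $\frac{|V(G)|}{2}$ exactly, so the disconnected case might need the observation that the hypothesis forces connectivity.
\end{itemize}

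The main obstacle is exactly this connectivity issue. The clean fallback is to use that the paper's convention of spanning subgraphs of a $2$-connected graph is meant with $H$ connected. That is implicit, since the value $\frac{|V(G)|}{2}$ is attained in the $2$-connected setting. Alternatively, one can observe that the relevant restriction map on cuts still works whenever every bond of $G$ meets $E(H)$.

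With the check done, $\widetilde{{\bf u}}\in\SP(I(M(G)))$. Then \Cref{thm:SPWaldschmidt} gives $\widehat{\alpha}(I(M(G)))\le\frac{|V(G)|}{2}$, and combining with the lower bound yields equality.
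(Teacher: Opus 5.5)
Your argument is correct for connected $H$ and is essentially the paper's proof. The paper notes that every spanning tree of $H$ is a spanning tree of $G$, so $I(M(H))\subseteq I(M(G))$ and hence $\widehat{\alpha}(I(M(G)))\le\widehat{\alpha}(I(M(H)))$. Your zero-extension of an optimal point of $\SP(I(M(H)))$ is the polyhedral form of that same containment (every bond of $G$ contains a bond of $H$), and you pair it with the same lower bound from \Cref{thm:grap-mat-wald-lb}(a).

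Do not try to derive connectivity of $H$ from the hypothesis: it does not follow, the statement fails without it, and the paper's proof also assumes it tacitly. Take the simple $2$-connected graph $G$ with edges $ca_1,ca_2,a_1a_2,cb_1,cb_2,b_1b_2,a_1b_1,vc,va_1$, and let $H$ be the two triangles plus the isolated vertex $v$. Then $\widehat{\alpha}(I(M(H)))=\tfrac{3}{2}+\tfrac{3}{2}=3=\tfrac{|V(G)|}{2}$. In $\SP(I(M(G)))$, the disjoint vertex bonds at $a_2$, $b_2$, $v$ give $\sum a_e\ge 3$. Equality would force $a_{ca_1}=a_{cb_1}=a_{a_1b_1}=0$ and $a_{cb_2}+a_{b_1b_2}=1$. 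That contradicts the bonds $\{cb_1,b_1b_2,a_1b_1\}$ and $\{cb_1,cb_2,a_1b_1\}$, which would then require $a_{b_1b_2}\ge 1$ and $a_{cb_2}\ge 1$. Hence $\widehat{\alpha}(I(M(G)))>3$; in fact it equals $\tfrac{7}{2}$. So connectivity of $H$ should be stated as a hypothesis, not treated as a convention to be justified.
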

\begin{proof}
Since $H$ is a spanning subgraph of $G$, every spanning tree of $H$ is a spanning tree of $G$, so $I(M(H))\subseteq I(M(G))$.  Thus $\widehat{\alpha}(I(M(G)))\le \widehat{\alpha}(I(M(H))=\frac{|V(G)|}{2}$.  We also have $\frac{|V(G)|}{2}\le \widehat{\alpha}(I(M(G)))$ from \Cref{thm:grap-mat-wald-lb}, so the result follows.
\end{proof}

\subsection{Asymptotic Resurgence of Facet Ideals of Graphic Matroids} 
We obtain an upper bound for the asymptotic resurgence of facet ideals of graphic matroids and show that this bound is achieved by a larger class of graphic matroids.

\begin{Theorem}\label{thm:asym-gr-mat-up}
Let $G$ be a $2$-connected simple graph, and let $M = M(G)$ be its graphic matroid. Then
\[
\widehat{\rho}(I(M)) \leq 2 - \frac{2}{|V(G)|},
\]
with equality if and only if $\widehat{\alpha}(I(M))=\frac{|V(G)|}{2}$.  In particular, equality holds if $G$ is Hamiltonian, or if $G$ is regular with $\lambda(G) = \delta(G)$.
\end{Theorem}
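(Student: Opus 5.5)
We would argue via the contraction formula of \Cref{thm:asym-res-facet}:
\[
\widehat{\rho}(I(M))=\max_{F\in\cL(M)\setminus\{E\}}\frac{\rk(M/F)}{\widehat{\alpha}(I(M/F))}.
\]
The plan is to bound each term by $2-\frac{2}{|V(G)|}$.

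\textbf{The term $F=\emptyset$.} Here $\rk(M)=|V(G)|-1$. By \Cref{thm:grap-mat-wald-lb}(a), $\widehat{\alpha}(I(M))\ge |V(G)|/2$, so
\[
\frac{\rk(M)}{\widehat{\alpha}(I(M))}\le \frac{2(|V(G)|-1)}{|V(G)|}=2-\frac{2}{|V(G)|}.
\]
Equality in this term holds exactly when $\widehat{\alpha}(I(M))=|V(G)|/2$.

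\textbf{The terms with $F\neq\emptyset$.} A flat $F$ of $M(G)$ of rank $i\ge 1$ corresponds to a partition of $V(G)$ into connected blocks (with $F$ the induced edges). The contraction $M/F$ is then the graphic matroid of the graph $G/F$ obtained by identifying each block. This graph has $n'=|V(G)|-i<|V(G)|$ vertices and is loopless, since $F$ is a flat.

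By \Cref{thm:simplification}, we may replace $M/F$ by $\si(M/F)=M(\si(G/F))$. This graph is simple and connected, but possibly not $2$-connected. We decompose it into its blocks $H_1,\dots,H_m$. Each $H_j$ with at least two edges is simple and $2$-connected, and single edges are coloops.

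By \Cref{thm:direct-sum}(b), $\widehat{\alpha}$ is additive over blocks. Applying \Cref{thm:grap-mat-wald-lb}(a) blockwise gives $\widehat{\alpha}(I(M(H_j)))\ge |V(H_j)|/2$ for the $2$-connected blocks. A coloop contributes $1$, which is at least $2/2$. Also $\rk$ is additive, with $\rk(M(H_j))=|V(H_j)|-1$. The ratio $\rk/\widehat{\alpha}$ of the sum is therefore at most the maximum of the blockwise ratios (\Cref{obs: fractions}). Each blockwise ratio is at most $2-\frac{2}{|V(H_j)|}\le 2-\frac{2}{n'}$, which is strictly less than $2-\frac{2}{|V(G)|}$.

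Hence every term with $F\ne\emptyset$ is strictly below the bound. This gives the inequality, and shows that equality forces the $F=\emptyset$ term to attain the maximum, i.e.\ $\widehat{\alpha}(I(M))=|V(G)|/2$. Conversely, if $\widehat{\alpha}(I(M))=|V(G)|/2$, the $F=\emptyset$ term equals the bound, so equality holds. The final sentence of the theorem then follows from \Cref{thm:grap-mat-wald-lb}(b) and (c).

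\textbf{Main obstacle.} The delicate step is the identification of contractions by flats with graph contractions whose simplifications decompose into blocks, each satisfying the hypotheses of \Cref{thm:grap-mat-wald-lb}. Blocks that are single edges, for instance, must be handled separately as coloops, as above.

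An alternative that sidesteps the block decomposition is to redo the vertex-bond argument directly on $G/F$. The edges at each vertex of $G/F$ form a cut, which contains a bond. Summing over all vertices suggests $\widehat{\alpha}\ge n'/2$ immediately, but a cut is not itself a circuit of $M^*$, so this only works through the bond it contains. We would first try the block-decomposition route.
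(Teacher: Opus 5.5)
Your proof is correct, and it takes a mildly but genuinely different route from the paper's. The paper inducts on $\rk(M)$ using the two-term form $\widehat{\rho}(I(M))=\max\{\rk(M)/\widehat{\alpha}(I(M)),\widehat{\rho}_{c,1}(I(M))\}$ of \Cref{thm:asym-res-facet}. It contracts only single edges, passes to the blocks of $\si(G/e)$, and uses \Cref{thm:direct-sum}(c) to write $\widehat{\rho}(I(M/e))$ as a maximum over those blocks. It then applies the theorem itself inductively to each block.

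You instead use the full flat formula and bound every term $\rk(M/F)/\widehat{\alpha}(I(M/F))$ directly. You work with $\widehat{\alpha}$ (additivity from \Cref{thm:direct-sum}(b) plus the mediant inequality) rather than with $\widehat{\rho}$, so in effect you unroll the paper's induction. Your version is non-inductive and needs only the vertex-bond lower bound on $\widehat{\alpha}$, together with the identification $M/F=M(G/F)$ for arbitrary flats. It also handles bridge blocks explicitly, a case the paper's phrase ``$2$-connected blocks'' passes over (harmlessly, since $\widehat{\rho}(\langle x_e\rangle)=1$). The paper's version only ever needs to describe single-edge contractions and reuses the statement being proved.

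The fallback you hesitated over also works and is shorter still. The graph $G/F$ is loopless and connected with $n'\ge 2$ vertices, so each vertex star $\delta(v)$ is a cut and contains a bond $B_v$. Every edge lies in exactly two stars and $\mathbf{a}\ge 0$ on $\SP(I(M/F))$, so $2\sum_e a_e\ge\sum_v\sum_{e\in B_v}a_e\ge n'$. This gives $\widehat{\alpha}(I(M/F))\ge n'/2$ with no simplification or block decomposition needed.
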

\begin{proof}
We proceed by induction on the rank of $M$.  Note that, since $G$ is connected, $\rk(M)=|V(G)|-1$.  The only simple $2$-connected graph whose matroid has rank two is the triangle graph $T$.  The facet ideal $I=I(M(T))$ coincides with the ideal of three general points in $\mathbb{P}^2$.  It is well-known (for example \cite[Example 1.1]{DFMS19}) that $\widehat{\alpha}(I)=\frac{3}{2}=\frac{|V(G)|}{2}$ and $\widehat{\rho}(I)=\frac{4}{3}=2-\frac{2}{|V(G)|}$.  So the result holds for the matroid of $2$-connected simple graphs of rank $2$.

Now, suppose $\rk(M)>2$.  By \Cref{thm:asym-res-facet}, we have
\[
\widehat{\rho}(I(M))=\max\left\{ \frac{\rk(M)}{\widehat{\alpha}(I(M))},\ \widehat{\rho}_{c,1}(I(M)) \right\}.
\]
We first bound $\widehat{\rho}_{c,1}(I(M))$. For any edge $e \in E(G)$, $M/e$ is the graphic matroid of $G/e$. Although $G/e$ may contain parallel edges, let $\si(G/e)$ denote its simplification. Then $\si(G/e)$ is a simple graph on $|V(G)|-1$ vertices, and $M(\si(G/e)) \cong \si(M(G/e))$. By \Cref{thm:simplification}, 
$\widehat{\rho}(I(M(G/e))) = \widehat{\rho}(I(M(\si(G/e))))$.  Decompose $\si(G/e)$ into its $2$-connected blocks $H_1,\dots,H_t$.  Accordingly,
$M(\si(G/e)) \cong \bigoplus\limits_{i=1}^t M(H_i)$.  By \Cref{thm:direct-sum}, $\widehat{\rho}(I(M(G/e))) = \max\limits_{1 \leq i \leq t} \widehat{\rho}(I(M(H_i)))$. Applying induction for each $H_i$, we obtain $\widehat{\rho}(I(M(H_i))) \leq 2 - \frac{2}{|V(H_i)|} \leq 2 - \frac{2}{|V(G)|-1}$, and therefore, $\widehat{\rho}_{c,1}(I(M)) \leq 2 - \frac{2}{|V(G)|-1}<2-\frac{2}{|V(G)|}$.

Now, consider the ratio $\frac{\rk(M)}{\widehat{\alpha}(I(M))}$. By \Cref{thm:wald-gen-up}, $\widehat{\alpha}(I(M)) \geq \frac{|V(G)|}{2}$, and hence,  $\frac{\rk(M)}{\widehat{\alpha}(I(M))} \leq 2 - \frac{2}{|V(G)|},$ with equality if and only if $\widehat{\alpha}(I(M))=\frac{|V(G)|}{2}$.  This completes the induction.

If $G$ is Hamiltonian or if $G$ is regular with $\lambda(G)=\delta(G)$, then by \Cref{thm:grap-mat-wald-lb}, we have $\widehat{\alpha}(I(M)) = \frac{|V(G)|}{2}$, thus $\widehat{\rho}(I(M))=2 - \frac{2}{|V(G)|}$.
\end{proof}

\begin{Remark}\label{rem:AsymptoticComparison}
The upper bound in \Cref{thm:asym-gr-mat-up} always improves on the upper bound in \Cref{prop:asym-upper-bound}, and the improvement is significant if the number of vertices is large.  Moreover, the upper bound in \Cref{thm:asym-gr-mat-up} improves upon the upper bound in \Cref{thm:AsymptoticResurgenceBoundsMatroid} as long as $\cu(M(G)^*)\ge 2$, and the improvement is significant if $\cu(M(G)^*)$ is large.
\end{Remark}

We observe next that, for graphs, we can get a slight improvement on the lower bound of \Cref{thm:AsymptoticResurgenceBoundsMatroid}.

\begin{Corollary}\label{cor:asym-lb-grap}
Let $G$ be a simple $2$-connected graph.  Let $H$ be a subgraph of $G$ on $m$ vertices such that $\widehat{\alpha}(I(M(H)))=\frac{m}{2}$.  Then
$\widehat{\rho}(I(M(G))) \geq 2-\frac{2}{m}$.  If $H$ is also a spanning subgraph of $G$, then $\widehat{\rho}(I(M(G)))=2-\frac{2}{|V(G)|}$.
\end{Corollary}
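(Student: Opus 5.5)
The plan is to realize $M(H)$ as a minor of $M(G)$ and then apply \Cref{prop:minors}. Since $H$ is a subgraph of $G$, deleting the edges of $E(G)\setminus E(H)$ from $M(G)$ gives $M(G)\backslash(E(G)\setminus E(H)) = M(H')$, where $H'$ is $H$ together with any vertices of $G$ not in $H$, now isolated. Isolated vertices do not affect the graphic matroid, so this deletion is exactly $M(H)$. \Cref{prop:minors} then gives $\widehat{\rho}(I(M(G)))\ge \widehat{\rho}(I(M(H)))$. It therefore suffices to show that $\widehat{\rho}(I(M(H)))\ge 2-\frac{2}{m}$.

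For this I use the GHV-type lower bound coming from \Cref{thm:asym-res-facet}, namely $\widehat{\rho}(I(M(H)))\ge \frac{\rk(M(H))}{\widehat{\alpha}(I(M(H)))}$. The key step is controlling $\rk(M(H))=m-r$, where $r$ is the number of connected components of $H$. I first argue that $H$ must be connected. Write $H$ as the disjoint union of its components $H_1,\dots,H_r$ with $m_j$ vertices each. Then $M(H)=\bigoplus_j M(H_j)$, and by \Cref{thm:direct-sum}(b), $\widehat{\alpha}(I(M(H)))=\sum_j \widehat{\alpha}(I(M(H_j)))$.

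To bound each summand, I would apply the vertex-bond argument of \Cref{thm:grap-mat-wald-lb}(a) to each block. This suggests $\widehat{\alpha}(I(M(H_j)))\ge m_j'/2$, where $m_j'$ counts the vertices in the nontrivial blocks of $H_j$. I expect this step to be the main obstacle, and I would handle it by reducing to blocks. Via \Cref{thm:direct-sum} the Waldschmidt constant is additive over blocks, and a 2-connected block on $k$ vertices contributes at least $k/2$ (or $1$ for a bridge). Summing over a tree of $b$ blocks covering $m_j$ vertices gives at least $(m_j+b-1)/2$ when all blocks are cycles or larger, and bridges contribute $1\ge 2/2$. In either case $\widehat{\alpha}(I(M(H_j)))\ge m_j/2$, with strict inequality as soon as $H_j$ is not 2-connected or is a single vertex; note that an isolated vertex contributes $0$ rather than $1/2$. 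Combining these, $\widehat{\alpha}(I(M(H)))=\frac m2$ forces no isolated vertices and all the component bounds to be tight. Even without connectivity, though, the ratio satisfies
\[
\frac{\rk(M(H))}{\widehat{\alpha}(I(M(H)))}=\frac{\sum_j (m_j-1)}{m/2}.
\]
Here I would check whether $\sum_j(m_j-1)=m-r$ still yields $2-\frac{2}{m}$. If it does not, I would instead apply the argument to a component attaining $\widehat{\alpha}(I(M(H_j)))=m_j/2$ and obtain $2-\frac{2}{m_j}$. The cleanest path is to note that the intended use has $H$ connected, as with Hamiltonian cycles and spanning subgraphs, and to use $\rk(M(H))=m-1$. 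This gives $\frac{m-1}{m/2}=2-\frac2m$.

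For the second statement, if $H$ is spanning then $m=|V(G)|$, so the first part gives $\widehat{\rho}(I(M(G)))\ge 2-\frac{2}{|V(G)|}$. \Cref{thm:asym-gr-mat-up} gives the reverse inequality, so equality holds. Alternatively, the preceding corollary gives $\widehat{\alpha}(I(M(G)))=\frac{|V(G)|}{2}$, and the equality criterion in \Cref{thm:asym-gr-mat-up} then applies directly.
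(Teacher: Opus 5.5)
With $H$ connected, your argument is correct and is essentially the paper's. You delete $E(G)\setminus E(H)$, apply \Cref{prop:minors}, and for spanning $H$ use the upper bound of \Cref{thm:asym-gr-mat-up}. The one difference is how you get $\widehat{\rho}(I(M(H)))\ge 2-\frac{2}{m}$. The paper quotes \Cref{thm:asym-gr-mat-up} for $H$ itself, which requires $H$ to be $2$-connected. You use only $\widehat{\rho}(I(M(H)))\ge \rk(M(H))/\widehat{\alpha}(I(M(H)))=\frac{m-1}{m/2}$, which needs only that $H$ is connected. By your own block count, a connected $H$ with $\widehat{\alpha}(I(M(H)))=\frac{m}{2}$ is a single block, so the two versions cover essentially the same cases.

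The part that fails is your attempt to show that $H$ must be connected, and it cannot be repaired, because the statement is false for disconnected $H$. Your claim that $\widehat{\alpha}(I(M(H)))=\frac{m}{2}$ rules out isolated vertices is already wrong: a path with two edges plus an isolated vertex has $m=4$ and $\widehat{\alpha}=2$, since a non-tight component offsets the deficit. More decisively, $H$ can be disconnected even with no isolated vertices and every component tight, and then the shortfall $\frac{m-r}{m/2}<2-\frac{2}{m}$ you noticed is genuine. For instance, let $G=\Theta_{2,2,3}$ with paths $aw_1b$, $aw_2b$, $au_1u_2b$. Let $H$ be the $4$-cycle $aw_1bw_2$ together with the disjoint edge $u_1u_2$. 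Then $H$ is spanning with $m=6$, and by \Cref{thm:direct-sum} $\widehat{\alpha}(I(M(H)))=2+1=3=\frac{m}{2}$. Yet \Cref{theta-graphs} gives $\widehat{\rho}(I(M(G)))=2-\frac{2}{5}=\frac{8}{5}<\frac{5}{3}=2-\frac{2}{6}$. So connectivity of $H$ must be part of the hypothesis. In the paper it comes from the standing convention in \Cref{sec:graphic-matroids} that graphs are simple and $2$-connected, which its proof uses when applying \Cref{thm:asym-gr-mat-up} to $H$. The preceding corollary, which you cite as an alternative, needs the same hypothesis, since a spanning forest of a disconnected $H$ is not a spanning tree of $G$; here $\widehat{\alpha}(I(M(G)))=\frac{7}{2}\neq 3$. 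With connectivity assumed, drop the digression and the single-component fallback, and the rest of your proof is complete.
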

\begin{proof}
By \Cref{thm:asym-gr-mat-up}, $\widehat{\rho}(I(M(H)))=2-\frac{2}{m}$.  Since $H$ is a subgraph of $G$, the graphic matroid $M(H)$ is a minor of $M(G)$.  By \Cref{prop:minors}, we have $\widehat{\rho}(I(M(H))) \leq \widehat{\rho}(I(M(G)))$.    Therefore,
$\widehat{\rho}(I(M(G))) \geq 2-\frac{2}{m}$.  If $H$ is a spanning subgraph, then $m=|V(G)|$ and we also have $\widehat{\rho}(I(M(G))) = 2-\frac{2}{m}$ by \Cref{thm:asym-gr-mat-up}.
\end{proof}

\begin{Remark}\label{rem:AddingEdgesToCycle}
Observe that, by \Cref{cor:asym-lb-grap}, if $C$ is a cycle on $n$ vertices and $G$ is a graph formed from $C$ by adding in any number of edges between the vertices of $C$, then $\widehat{\rho}(I(M(G)))=\widehat{\rho}(I(M(C))=2-\frac{2}{n}$.
\end{Remark}

\section{Examples}\label{sec:examples}
In this section, we examine some families of examples of graphs. If $I(M(G))$ is the facet ideal of a graphic matroid associated to a graph $G$ that is either Hamiltonian or regular, then the Waldschmidt constant attains the lower bound in \Cref{thm:grap-mat-wald-lb}. Consequently, in these cases, the asymptotic resurgence attains the upper bound as in \Cref{thm:asym-gr-mat-up}. The following example shows that neither of these conditions on $G$ is necessary to obtain equality for the Waldschmidt constant.

\begin{Example}\label{ex:nonham-nonregular-lower-bound-sharp}
Let $G$ be the graph shown in \Cref{fig:nonham-nonregular-nine-vertices}. Observe that $G$ is a simple planar graph with $|V(G)|=9$. Let $M=M(G)$. Note that $G$ is neither Hamiltonian nor regular. Nonetheless, we show that the bounds in \Cref{thm:grap-mat-wald-lb} and \Cref{thm:asym-gr-mat-up} are attained, that is $\widehat{\alpha}(I(M))=\frac{|V(G)|}{2}$ and $\widehat{\rho}(I(M))=2-\frac{2}{|V(G)|}$.

\begin{figure}[ht]
\centering
\begin{tikzpicture}[scale=1.05,
    vertex/.style={circle, fill=black, inner sep=1.7pt},
    every edge/.style={draw, line width=0.7pt}
]
\node[vertex,label=left:$p$] (p) at (-1.1,0) {};
\node[vertex,label=right:$q$] (q) at (0,0) {};
\node[vertex,label=right:$r$] (r) at (1.1,0) {};

\node[vertex,label=above left:$v_1$] (v1) at (-1.1,1.55) {};
\node[vertex,label=above right:$w_1$] (w1) at (1.1,1.55) {};
\node[vertex,label=above:$u_1$] (u1) at (0,0.85) {};

\node[vertex,label=below left:$v_2$] (v2) at (-1.1,-1.55) {};
\node[vertex,label=below right:$w_2$] (w2) at (1.1,-1.55) {};
\node[vertex,label=below:$u_2$] (u2) at (0,-0.85) {};

\foreach \i in {1,2}{
    \draw (u\i)--(v\i);
    \draw (u\i)--(w\i);
    \draw (v\i)--(w\i);
}

\draw (p)--(v1);
\draw (p)--(v2);
\draw (q)--(u1);
\draw (q)--(u2);
\draw (r)--(w1);
\draw (r)--(w2);

\end{tikzpicture}
\caption{A simple planar, non-Hamiltonian, non-regular graph $G$ on $9$ vertices.}
\label{fig:nonham-nonregular-nine-vertices}
\end{figure}

By \Cref{thm:grap-mat-wald-lb},
$\widehat{\alpha}(I(M)\geq \frac{|V(G)|}{2}=\frac{9}{2}$. To prove the reverse inequality, define
$\mathbf a\in \mathbb R_{\geq 0}^{|E(G)|}$ as
\[
a_e=
\begin{cases}
\frac14, & \text{if } e\in \{u_iv_i,u_iw_i,v_iw_i : i=1,2\},\\[2mm]
\frac12, & \text{otherwise}.
\end{cases}
\]

We show that $\mathbf a\in \SP(I(M))$. The defining
inequalities of $\SP(I(M))$ are $\sum\limits_{e \in B} a_e \geq 1$ for every bond $B$ of $G$. If $|B| \geq 4$, then $\sum\limits_{e \in B} a_e \geq  4\cdot \frac{1}{4}=1$. The only bonds of cardinality two are $\{pv_1,pv_2\},$ $\{qu_1,qu_2\}, $ and $\{rw_1,rw_2\}$
and for each of them, we have  $\sum\limits_{e \in B} a_e =1$.
Furthermore, every bond $B$ of cardinality three contains at least one edge incident to one of the vertices $p,q,r$, and every such edge has weight $\frac{1}{2}$. Since the remaining two edges have weight at least $\frac{1}{4}$, it follows that
$\sum\limits_{e\in B} a_e\geq 1$. Hence, $\mathbf a\in \SP(I(M))$.

Thus, $\widehat{\alpha}(I(M))\leq \sum\limits_{e\in E(G)} a_e= \frac{9}{2},$ and hence, $\widehat{\alpha}(I(M))= \frac{|V(G)|}{2}$.  By \Cref{thm:asym-gr-mat-up}, $\widehat{\rho}(I(M))=2-\frac{2}{9}=\frac{16}{9}$.
\end{Example}

\begin{Remark}
    Computations in Sage~\cite{sagemath} (see the entry at the bottom of the rightmost column in \Cref{tab:enumeration}) indicate that there are  only four non-Hamiltonian graphs on 9 vertices up to isomorphism satisfying $\widehat{\alpha}(I(M(H)))=9/2$.  These are the graph $G$ in \Cref{ex:nonham-nonregular-lower-bound-sharp} and the three graphs obtained from $G$ by adding in some subset of the edges $\{v_1,v_2\},\{u_1,u_2\}, \mbox{ and } \{w_1,w_2\}$.  Thus the graph $G$ in \Cref{ex:nonham-nonregular-lower-bound-sharp} is the unique minimal non-Hamiltonian graph on $9$ vertices satisfying $\widehat{\alpha}(I(M(G)))=9/2$.
    \end{Remark}

In our remaining examples, we consider graphs $G$ so that the asymptotic resurgence of the facet ideal of $G$ achieves the lower bound of \Cref{thm:asym-gr-mat-up}.  Moreover, in each of these examples, $\widehat{\rho}(I(G))>\frac{\alpha(I(G))}{\widehat{\alpha}(I(G))}$.  That is, we have strict inequality in \GHV{}.

A \emph{Theta graph}, $\Theta_{r,s,t}$, is the union of three internally disjoint (simple) paths of lengths $r,s,t\ge 2$ that have the same two distinct end vertices. Equivalently, one obtains $\Theta_{r,s,t}$ by taking 3 paths of lengths $r,s,t$ and gluing them together at their endpoints.  

\begin{Theorem}\label{theta-graphs}
Let $G=\Theta_{r,s,t}$ be a  Theta graph with $r\le s\le t$ and let $M=M(G)$. Then 
\begin{enumerate}
\item $\widehat{\alpha}(I(M))=\frac{r+s+t}{2}$ and
\item $\widehat{\rho}(I(M))=2-\frac{2}{\cu(G)}=2-\frac{2}{s+t}$.
\end{enumerate}
In particular, $\widehat{\rho}(I(M))>\frac{\alpha(I(M))}{\widehat{\alpha}(I(M))}$.
\end{Theorem}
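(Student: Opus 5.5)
The plan is to compute $\widehat{\alpha}$ by hand from the symbolic polyhedron, and then get $\widehat{\rho}$ from \Cref{thm:asym-res-facet} together with an induction on the number of edges $r+s+t$. First I would describe the bonds of $G=\Theta_{r,s,t}$, writing $P_1,P_2,P_3$ for the three paths (of lengths $r,s,t$) joining the two branch vertices. Deleting one edge from a path leaves both pieces attached to the branch vertices, so no single edge is a bond. Deleting two edges of the same path $P_i$ isolates the segment between them, so such pairs are bonds. Deleting one edge from each of the three paths separates the two branch vertices, so such triples are bonds. Deleting two edges from different paths leaves the third path connecting everything, so these pairs are not bonds. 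Hence $M^*$ has rank $2$ and its circuits are these pairs and triples, and by \Cref{cor:SPIM} the polyhedron $\SP(I(M))$ is cut out by $a_e+a_f\ge 1$ for $e,f$ in a common path and $a_e+a_f+a_g\ge 1$ for one edge from each path.

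For (a), the vector with all coordinates $\frac12$ lies in $\SP(I(M))$, so $\widehat{\alpha}(I(M))\le\frac{r+s+t}{2}$. Conversely, on a path of length $\ell\ge 2$ the pairwise inequalities force at most one coordinate $x<\frac12$. Every other coordinate on that path is then at least $1-x$, so the sum over the path is at least $x+(\ell-1)(1-x)\ge \frac{\ell}{2}$. Summing over the three paths and applying \Cref{thm:SPWaldschmidt} gives the equality. Note that \Cref{thm:grap-mat-wald-lb} alone only gives $\frac{|V(G)|}{2}=\frac{r+s+t-1}{2}$, which is why this direct argument is needed.

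For (b), the lower bound $\widehat{\rho}(I(M))\ge 2-\frac{2}{\cu(G)}$ is \Cref{thm:AsymptoticResurgenceBoundsMatroid}, where $\cu(G)=s+t$. For the upper bound I use $\widehat{\rho}(I(M))=\max\{\rk(M)/\widehat{\alpha}(I(M)),\widehat{\rho}_{c,1}(I(M))\}$. The first term equals $\frac{2(r+s+t-2)}{r+s+t}=2-\frac{4}{r+s+t}$, and this is at most $2-\frac{2}{s+t}$ because $r\le s+t$. For $\widehat{\rho}_{c,1}$, the rank-one flats are single edges since $G$ is simple, so I must bound $\widehat{\rho}(I(M(G/e)))$. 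By \Cref{thm:simplification} this equals $\widehat{\rho}$ of the simplification of $G/e$. Contracting an edge of a path of length $\ge 3$ gives another Theta graph with longest cycle at most $s+t$. Contracting an edge of a path of length $2$ gives a cycle with a single chord, whose value is $2-\frac{2}{n}$ with $n\le s+t$ by \Cref{rem:AddingEdgesToCycle}. Therefore I would run the induction on $r+s+t$ with the combined statement ``$\widehat{\rho}\le 2-\frac{2}{\cu}$ for Theta graphs'', taking the cycle-with-chord case from \Cref{rem:AddingEdgesToCycle} and using that circumference does not increase under contraction.

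The main obstacle is bookkeeping the shapes that arise after contraction and simplification, and checking that none escapes the induction. In particular, when several paths become short, contraction could create parallel edges, blocks, or a direct sum of two cycles. Such cases should be handled by \Cref{thm:direct-sum} and \Cref{thm:para-ext}. Finally, for the ``in particular'' statement, $\alpha(I(M))=\rk(M)=r+s+t-2$, so $\frac{\alpha}{\widehat{\alpha}}=2-\frac{4}{r+s+t}<2-\frac{2}{s+t}$, since $r<2\le s+t$ fails only when $r\ge s+t$, which is impossible given $r\le s\le t$.
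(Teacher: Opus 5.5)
Your proposal is correct and essentially follows the paper's proof. You compute $\widehat{\alpha}$ from the two-edge bonds on each path; the paper sums all $\binom{\ell}{2}$ pair inequalities, while you argue that at most one coordinate per path lies below $\frac12$. You then get $\widehat{\rho}$ from \Cref{thm:asym-res-facet} by induction on $r+s+t$ over single-edge contractions.

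The only variations are these. You take the lower bound $2-\frac{2}{s+t}$ from \Cref{thm:AsymptoticResurgenceBoundsMatroid} and induct only on the upper bound. You handle contractions of length-$2$ paths via \Cref{rem:AddingEdgesToCycle}, whereas the paper uses the vertex-count bound of \Cref{thm:asym-gr-mat-up}.

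The worry in your last paragraph is moot. Contracting one edge of $\Theta_{r,s,t}$ with $r,s,t\ge 2$ always gives either a smaller simple Theta graph or a simple cycle with one chord, so your case split is already exhaustive. Also, the final strict inequality only needs $r<s+t$.
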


\begin{proof}
First notice that $|E(G)|=r+s+t$, and thus, by \Cref{thm:wald-gen-up}~(a) $\widehat{\alpha}(I(M))\le \frac{r+s+t}{2}$, since $\lambda(G)=2$.

Let $P_r, P_s, P_t$ be the paths of length $r,s,t$, respectively in $G$.   Any two edges of $P_r$ form a bond of $G$. Let $C\in \C(M^*)$ be formed by taking any two edges on $P_r$. Then $\chi_C \cdot {\bf a}=\sum\limits_{e\in C}a_e\ge 1$ for all ${\bf a} \in \SP(I(M))$. Now, there are $\binom{r}{2}$ choices of such $C$, and adding all these inequalities together,  we obtain
\[ (r-1)\sum\limits_{e\in E(P_r)} a_e\ge \binom{r}{2}.\] Thus, $\sum \limits_{e\in E(P_r)} a_e\ge \frac{r}{2}$.  Similarly, $\sum\limits_{e \in E(P_s)} a_e\ge \frac{s}{2}$ and $\sum\limits_{e \in E(P_t)} a_e\ge \frac{t}{2}$.  Hence, $\sum\limits_{e\in E(G)} a_e\ge \frac{r+s+t}{2}$ for all ${\bf a} \in \SP(I(M))$, and so, by \Cref{thm:SPWaldschmidt},  $\widehat{\alpha}(I(M))\ge \frac{r+s+t}{2}$, completing the proof of (a).

We prove (b) by induction on $|E(G)|=r+s+t$.  Since $r,s,t\ge 2$, our base case is $r=s=t=2$.  In this case, for any $e \in E(G)$, $G/e$ is a four-cycle with a chord, which is Hamiltonian, and thus $\widehat{\rho}_{c,1}(I(M))=2-\frac{2}{4}=\frac{3}{2}$ by \Cref{thm:asym-gr-mat-up}.  By part~(a), $\widehat{\alpha}(I(M))=3$.  Thus $\widehat{\rho}(I(M))=\max\{4/3,3/2\}=3/2$, by \Cref{thm:asym-res-facet}.

 Now suppose that $r+s+t>6$.  First we claim that $\widehat{\rho}_{c,1}(I(M))=2-\frac{2}{s+t}$. We may assume that $t>2$.  If $r=2$, then contracting an edge of $P_r$ results in a Hamiltonian graph on $r+s+t-2=s+t$ vertices. Hence, the asymptotic resurgence of this contraction is $2-\frac{2}{s+t}$ by \Cref{thm:asym-gr-mat-up}.  On the other hand, since $\Theta_{2,s,t}$ has $s+t+1$ vertices, contracting any edge $e$ results in a simple $2$-connected graph $G$ on $s+t$ vertices.  By \Cref{thm:asym-gr-mat-up}, $\widehat{\rho}(I(M/e))\le 2-\frac{2}{s+t}$.  It follows that if $r=2$, $\widehat{\rho}_{c,1}(I(M))=2-\frac{2}{s+t}$.

     Finally, if $r>2$, that is if all the paths have length greater than $2$, then contracting any edge along $P_{r}, P_{s}, P_{t}$ results in the matroids $\Theta_{r-1,s,t}$, $\Theta_{r,s-1,t}$, or $\Theta_{r,s,t-1}$, respectively. Thus, by induction, $\widehat{\rho}_{c,1}(I(M))=\max\{2-\frac{2}{\cu(\Theta_{r-1,s,t})}, 2-\frac{2}{\cu(\Theta_{r,s-1,t})}, 2-\frac{2}{\cu(\Theta_{r,s,t-1})}\}=2-\frac{2}{s+t}$, as claimed. 
     The result now follows by part (a) and \Cref{thm:asym-res-facet}, since $\frac{\alpha(I(M))}{\widehat{\alpha}(I(M))}= \frac{2(r+s+t-2)}{r+s+t}=2-\frac{4}{r+s+t}<2-\frac{2}{s+t}$.
     \end{proof}

Let $m,n \geq 2$. The \emph{Jahangir graph} $J_{m,n}$ is obtained as follows. Let $C_{mn}$ be a cycle with vertex set $V(C_{mn})=\{v_1,v_2,\dots,v_{mn}\}$
listed in cyclic order. Add a new vertex $z$, and join $z$ to the vertices $v_1, v_{1+m}, v_{1+2m}, \dots, v_{1+(n-1)m}$. We call $z$ the
\textit{center}, the edges of $C_{mn}$ the \textit{rim edges}, and the edges
incident to $z$ the \textit{spokes}. Thus $|V(J_{m,n})|=mn+1$.

\begin{Theorem}\label{jahangir-graphs}
Let $m,n \ge 2$, $G=J_{m,n}$, and let $M=M(G)$. Then \begin{enumerate}
\item $\widehat{\alpha}(I(M))=\frac{mn+2}{2}$ and
\item  $\widehat{\rho}(I(M))=2-\frac{2}{mn}=2-\frac{2}{\cu(G)}$.
\end{enumerate}
In particular, $\widehat{\rho}(I(M))>\frac{\alpha(I(M))}{\widehat{\alpha}(I(M))}$.
\end{Theorem}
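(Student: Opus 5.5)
The plan is to get both parts from the general bounds of \Cref{sec:graphic-matroids}, together with one direct LP argument for the lower bound on $\widehat{\alpha}$. Note that $G=J_{m,n}$ is simple and $2$-connected, with $|V(G)|=mn+1$ and $|E(G)|=mn+n$.

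\emph{Circumference.} First I will show $\cu(G)=mn$. The rim $C_{mn}$ is a cycle of length $mn$. Any cycle through $z$ uses exactly two spokes $zv_{1+im}$ and $zv_{1+jm}$, plus one of the two rim paths between $v_{1+im}$ and $v_{1+jm}$. The longer such rim path omits at least one full segment of $m$ rim edges. Hence such a cycle has length at most $2+mn-m\le mn$, since $m\ge 2$.

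\emph{Part (a), upper bound.} \Cref{thm:Hamil-Mat-ub} (equivalently the upper bound in \Cref{thm:grap-mat-wald-lb}(a)) gives $\widehat{\alpha}(I(M))\le |V(G)|-\frac{\cu(G)}{2}=mn+1-\frac{mn}{2}=\frac{mn+2}{2}$.

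\emph{Part (a), lower bound.} I will use \Cref{thm:SPWaldschmidt} and sum bond inequalities, as in the Theta graph proof. The rim splits into $n$ segments, each consisting of the $m$ rim edges between consecutive spoke vertices $v_{1+im}$ and $v_{1+(i+1)m}$.
\begin{itemize}
\item Any two rim edges in the same segment form a bond. Removing them isolates the internal subpath between them, and removing only one of them leaves $G$ connected.
\item Summing these $\binom{m}{2}$ inequalities over one segment gives $(m-1)\sum_{e\in\text{seg}}a_e\ge\binom m2$, so each segment has weight at least $\frac m2$. The rim therefore has total weight at least $\frac{mn}{2}$.
\item The vertex bond at $z$ consists of all $n$ spokes, so the spokes have total weight at least $1$.
\end{itemize}
Together these give $\sum_e a_e\ge \frac{mn}{2}+1$, which proves (a). The step needing the most care is checking that the segment edge pairs really are minimal cuts, and I expect it to be routine.

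\emph{Part (b).} The rim $C_{mn}$ is a Hamiltonian, hence $2$-connected, subgraph on $mn$ vertices. By \Cref{thm:grap-mat-wald-lb}(b), $\widehat{\alpha}(I(M(C_{mn})))=\frac{mn}{2}$, so \Cref{cor:asym-lb-grap} gives $\widehat{\rho}(I(M))\ge 2-\frac{2}{mn}$. For the upper bound I will use \Cref{thm:asym-res-facet}, which expresses $\widehat{\rho}(I(M))$ as a maximum of two terms.
\begin{itemize}
\item First term: $\frac{\rk(M)}{\widehat{\alpha}(I(M))}=\frac{2mn}{mn+2}=2-\frac{4}{mn+2}$. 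This is less than $2-\frac{2}{mn}$ because $mn>2$.
\item Second term: for any edge $e$, follow the argument in the proof of \Cref{thm:asym-gr-mat-up}. Pass to $\si(G/e)$, which has $mn$ vertices, using \Cref{thm:simplification}. Decompose it into blocks via \Cref{thm:direct-sum}. Apply \Cref{thm:asym-gr-mat-up} to each block, which has at most $mn$ vertices. This gives $\widehat{\rho}(I(M/e))\le 2-\frac{2}{mn}$.
\end{itemize}
Hence $\widehat{\rho}(I(M))=2-\frac{2}{mn}=2-\frac{2}{\cu(G)}$.

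\emph{Strict inequality.} Finally, $\alpha(I(M))=\rk(M)=mn$, so $\frac{\alpha(I(M))}{\widehat{\alpha}(I(M))}=\frac{2mn}{mn+2}<2-\frac{2}{mn}=\widehat{\rho}(I(M))$, by the same comparison as for the first term.
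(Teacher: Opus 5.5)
Your proposal is correct and follows the paper's proof essentially step for step. For (a) you use the same pairwise-bond summation on each rim segment plus the spoke vertex bond, and the same upper bound from \Cref{thm:grap-mat-wald-lb}(a); for (b) you use \Cref{thm:asym-res-facet} with the contraction bound from \Cref{thm:asym-gr-mat-up}. The differences are minor: you verify $\cu(G)=mn$ explicitly, which the paper uses tacitly, and you obtain $\widehat{\rho}(I(M))\ge 2-\frac{2}{mn}$ from the rim as a Hamiltonian subgraph via \Cref{cor:asym-lb-grap}, whereas the paper notes that contracting a spoke gives a Hamiltonian graph on $mn$ vertices.
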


\begin{proof}
We first compute $\widehat{\alpha}(I(M))$. By \Cref{cor:SPIM}, if $\mathbf{a}=(a_e)_{e\in E(G)}\in \SP(I(M))$, then $\sum\limits_{e\in B} a_e \geq 1$ for all bonds $B$ of $G$.

Let $\mathbf{a}\in \SP(I(M))$.  Consider one of the $n$ rim paths between two consecutive neighbors of $z$, and let its edges be $e_1,\ldots,e_m$. For every $1\leq i<j\leq m$, $\{e_i,e_j\}$ is a bond of $G$, and therefore, $a_{e_i}+a_{e_j}\geq 1$. By adding these
$\binom{m}{2}$ inequalities, we get
$(m-1)(a_{e_1}+\cdots+a_{e_m})\geq \binom{m}{2}$ and hence $a_{e_1}+\cdots+a_{e_m}\geq \frac{m}{2}$.
 Now, by adding this inequality over all $n$ such rim paths, we get $\sum\limits_{e\in E(C_{mn})} a_e \geq \frac{mn}{2}$. Also, the set of spokes is a vertex bond, so $\sum\limits_{z \in e} a_e\geq 1$. Hence, for every $\mathbf {a} \in \SP(I(M))$,
$\sum\limits_{e\in E(G)} a_e \geq \frac{mn}{2}+1=\frac{mn+2}{2}$. Thus, by \Cref{thm:SPWaldschmidt}, $\widehat{\alpha}(I(M))\geq \frac{mn+2}{2}$. The reverse inequality follows from \Cref{thm:grap-mat-wald-lb}~(a). 
Hence, 
$\widehat{\alpha}(I(M)) = \frac{mn+2}{2}$.

We now compute $\widehat{\rho}(I(M))$. Since $G$ is simple $2$-connected, $\rk(M)=|V(G)|-1=mn$. By \Cref{thm:asym-res-facet}, we have
$\widehat{\rho}(I(M))=\max\left\{
\frac{\rk(M)}{\widehat{\alpha}(I(M))},
\widehat{\rho}_{c,1}(I(M))
\right\}$.
Using the first part,
$\frac{\rk(M)}{\widehat{\alpha}(I(M))}=\frac{2mn}{mn+2}$. By definition,
$\widehat{\rho}_{c,1}(I(M)) = \max\limits_{e \in E(G)} \widehat{\rho}(I(M/e))$.
Since $M = M(G)$ is a graphic matroid, we have $M/e \cong M(G/e)$ for every $e \in E(G)$, and hence,
$\widehat{\rho}_{c,1}(I(M)) = \max\limits_{e \in E(G)} \widehat{\rho}(I(M(G/e)))$.

If $e$ is a spoke, then $G/e$ is a Hamiltonian graph on $mn$ vertices. By \Cref{thm:asym-gr-mat-up}, $\widehat{\rho}(I(M(G/e))) = 2 - \frac{2}{mn}$, and hence $\widehat{\rho}_{c,1}(I(M)) \geq 2 - \frac{2}{mn}$. On the other hand, \Cref{thm:asym-gr-mat-up} implies that for every $e \in E(G)$,
$\widehat{\rho}(I(M(G/e))) \leq 2 - \frac{2}{mn}$.
Thus, $\widehat{\rho}_{c,1}(I(M)) = 2 - \frac{2}{mn}$.
Finally, since $2 - \frac{2}{mn} > \frac{2mn}{mn+2}$ for $mn > 2$, it follows that
$\widehat{\rho}(I(M)) = 2 - \frac{2}{mn}$.\end{proof}

\begin{Example}\label{ex:cubic-planar-lambda-two}
Let $G$ be the graph shown in \Cref{fig:cubic-planar-lambda-two}. Notice that $G$ is a simple planar $3$-regular graph with $|V(G)|=14$, $|E(G)|=21$, and $\lambda(G)=2$.

\begin{figure}[ht]
\centering
\begin{tikzpicture}[scale=1.05,
    vertex/.style={circle, fill=black, inner sep=1.7pt},
    every edge/.style={draw, line width=0.7pt}
]
\node[vertex,label=left:$p$] (p) at (-3,0) {};
\node[vertex,label=right:$q$] (q) at (3,0) {};

\node[vertex,label=left:$y_1$] (y1) at (-1.2,2.4) {};
\node[vertex,label=right:$z_1$] (z1) at (1.2,2.4) {};
\node[vertex,label=above:$u_1$] (u1) at (0,3.15) {};
\node[vertex,label=below:$v_1$] (v1) at (0,1.65) {};

\node[vertex,label=above:$y_2$] (y2) at (-1.2,0) {};
\node[vertex,label=above:$z_2$] (z2) at (1.2,0) {};
\node[vertex,label=above:$u_2$] (u2) at (0,0.75) {};
\node[vertex,label=below:$v_2$] (v2) at (0,-0.75) {};

\node[vertex,label=left:$y_3$] (y3) at (-1.2,-2.4) {};
\node[vertex,label=right:$z_3$] (z3) at (1.2,-2.4) {};
\node[vertex,label=above:$u_3$] (u3) at (0,-1.65) {};
\node[vertex,label=below:$v_3$] (v3) at (0,-3.15) {};

\foreach \i in {1,2,3}{
    \draw (y\i)--(u\i);
    \draw (u\i)--(z\i);
    \draw (z\i)--(v\i);
    \draw (v\i)--(y\i);
    \draw (u\i)--(v\i);
}

\foreach \i in {1,2,3}{
    \draw (p)--(y\i);
    \draw (q)--(z\i);
}
\end{tikzpicture}
\caption{A simple planar $3$-regular graph $G$ with edge-connectivity $2$.}
\label{fig:cubic-planar-lambda-two}
\end{figure}

We claim that $ \widehat{\alpha}(I(M(G)))=\frac{15}{2}. $ For each vertex $w\in V(G)\setminus\{p,q\}$, let $ B_w=\{e\in E(G): w\in e\}, $ and for $i=1,2,3$, let $ B_i=\{py_i,qz_i\}. $ The collection $ \mathcal{D} = \{B_w : w\in V(G)\setminus\{p,q\}\} \cup \{B_1,B_2,B_3\} $ consists of fifteen bonds. Moreover, every edge of $G$ belongs to exactly two members of $\mathcal{D}$. Summing the corresponding symbolic polyhedron inequalities yields $ 2\sum\limits_{e\in E(G)}a_e\ge 15, $ and therefore, $ \widehat{\alpha}(I(M(G)))\ge \frac{15}{2}. $

To prove the reverse inequality, define $\mathbf a\in\mathbb R_{\ge 0}^{|E(G)|}$ by \[ a_e= \begin{cases} \frac12, & e\in \{py_i,qz_i,u_iv_i : i=1,2,3\},\\[2mm] \frac14, & \text{otherwise}. \end{cases} \]
Let $B$ be a bond of $G$. If $|B|\geq 4$, then $\sum\limits_{e\in B}a_e\geq 4\cdot \frac{1}{4}=1$. The only bonds of size two are
$\{py_i,qz_i\},$ for $i=1,2,3$. Moreover, every bond of size three contains at least one edge from $\{py_i,qz_i,u_iv_i : i=1,2,3\}$. It follows that
$\sum\limits_{e\in B} a_e \ge 1
$ for every bond $B$ of $G$.  Therefore, $\mathbf a\in \SP(I(M(G)))$.

Since
$\sum\limits_{e\in E(G)} a_e=
\frac{15}{2},$ we obtain
$\widehat{\alpha}(I(M(G)))\le \frac{15}{2}$. Hence, $
\widehat{\alpha}(I(M(G)))=\frac{15}{2} >\frac{|V(G)|}{2}$.

A Sage~\cite{sagemath} computation based on the approach described in \Cref{rmk:asym-res-algo} shows that
\[
\widehat{\rho}(I(M(G)))=\frac{9}{5}=2-\frac{2}{\cu(G)}.
\]
In particular, $\widehat{\rho}(I(M(G))>\frac{\alpha(I(M(G)))}{\widehat{\alpha}(I(M(G)))}$.
\end{Example}

\section{Concluding remarks and questions}\label{sec:Conclusion}

In \Cref{sec:reduction-techniques}, we reduced the computation of the asymptotic resurgence of facet ideals of matroids to those matroids that are simple and connected.  Thus, we restrict our concluding discussion to simple and $2$-connected graphs.  It follows from  \Cref{thm:AsymptoticResurgenceBoundsMatroid} and  \Cref{thm:asym-gr-mat-up} that if $G$ is a $2$-connected simple graph, then $2-\frac{2}{\cu(G)} \leq \widehat{\rho}(I(M(G))) \leq 2-\frac{2}{|V(G)|}$. Moreover, the upper bound is attained if $G$ is Hamiltonian.  This naturally raises the following two questions.

\begin{Question}
What graph-theoretic properties characterize those non-Hamiltonian  graphs $G$ for which $\widehat{\rho}(I(M(G)))=2-\frac{2}{|V(G)|}$?  Equivalently (by \Cref{thm:asym-gr-mat-up}), what graph-theoretic properties characterize non-Hamiltonian graphs $G$ satisfying
$\widehat{\alpha}(I(M(G)))=\frac{|V(G)|}{2}$?
\end{Question}

\begin{Question}\label{ques:twobounds}
Suppose $G$ is a simple $2$-connected graph.
Is the asymptotic resurgence of $I(M(G))$ always either $2-\frac{2}{\cu(G)}$ or $2-\frac{2}{|V(G)|}$?
\end{Question}

We expect that the answer to \Cref{ques:twobounds} is no, but computer computations in Sage~\cite{sagemath} -- which we record in \Cref{tab:enumeration} -- indicate that the answer is yes for every $2$-connected graph on nine vertices or less.  See \Cref{rmk:asym-res-algo} for a brief discussion of how we performed these computations.

\begin{table}[ht]
\centering
\renewcommand{\arraystretch}{1.3}
\setlength{\tabcolsep}{7pt}

\begin{tabular}{
    |c|
    r|
    r|
    r|
    >{\centering\arraybackslash}p{3.7cm}|
    >{\centering\arraybackslash}p{3.7cm}|
}
\hline
\multirow{4}{*}{$n$}
&
\multicolumn{5}{c|}{Number of nonisomorphic graphs on $n$ vertices}
\\
\cline{2-6}
&
&
\multicolumn{4}{c|}{Number of $2$-connected graphs}
\\
\cline{3-6}
&
&
&
\multicolumn{3}{c|}{Number of non-Hamiltonian graphs}
\\
\cline{4-6}
&
\multicolumn{1}{c|}{Total}
&
\multicolumn{1}{c|}{Total}
&
\multicolumn{1}{c|}{Total}
&
\multicolumn{1}{c|}{
    \begin{tabular}{c}
    Satisfying\\[1mm]
    $\widehat{\rho}(I(M(G)))
      =2-\dfrac{2}{\cu(G)}$
    \end{tabular}
}
&
\multicolumn{1}{c|}{
    \begin{tabular}{c}
    Satisfying\\[1mm]
    $\widehat{\rho}(I(M(G)))
      =2-\dfrac{2}{|V(G)|}$
    \end{tabular}
}
\\
\hline
4 &     11 &      3 &     0 &     0 & 0 \\
\hline
5 &     34 &     10 &     2 &     2 & 0 \\
\hline
6 &    156 &     56 &     8 &     8 & 0 \\
\hline
7 &   1044 &    468 &    85 &    85 & 0 \\
\hline
8 &  12346 &   7123 &   927 &   927 & 0 \\
\hline
9 & 274668 & 194066 & 16983 & 16979 & 4 \\
\hline
\end{tabular}
\caption{Number of nonisomorphic graphs on $n$ vertices attaining the bounds in \Cref{thm:AsymptoticResurgenceBoundsMatroid} and \Cref{thm:asym-gr-mat-up}.}
\label{tab:enumeration}
\end{table}

\begin{Remark}\label{rmk:asym-res-algo}
As is discussed in \cite[Appendix~A]{DFK26}, an algorithm to compute the asymptotic resurgence of a squarefree monomial ideal $I$ is implicit from~\cite{DFMS19}.  Namely, let $\nu_1,\ldots,\nu_k$ be the linear functionals corresponding to inward pointing normal vectors of the Newton polyhedron $\NP(I)$.  Put $\nu_i(I)=\min\{\nu_i(\mathbf{a}):\mathbf{a}\in \NP(I)\}$ and  $\widehat{\nu}_i(I)=\min\{\nu_i(\mathbf{a}):\mathbf{a}\in \SP(I)\}$ for $i=1,\ldots,k$.  Then $\widehat{\rho}(I)=\min\left\lbrace \frac{\nu_i(I)}{\widehat{\nu}_i(I)}: i=1,\ldots,k\right\rbrace$.

In case $M$ is a matroid and $I(M)$ is its facet ideal, the situation is much improved since the inequalities defining $\NP(I(M))$ come from the well-known inequalities for matroid polytopes (see \cite[Remark~3.10]{DFK26}).  Namely, the defining inequalities of $\NP(I(M))$ are all of the form $\chi_{E\setminus F}\cdot \mathbf{a}\ge \rk(M)-\rk_M(F)$, where $F$ is a flat of $M$.  This observation allows one to compute the asymptotic resurgence provided that one can compute the flats of a matroid and minimize the functionals $\chi_{E\setminus F}$ over the symbolic polyhedron of $I(M)$.  This is possible in Macaulay2, however, we found that implementations in Sage run considerably faster by taking advantage of Python's linear programming routines.  The code we used to compute asymptotic resurgence can be found under the research tab on the website of the first author (\href{midipasq.github.io}{midipasq.github.io}).
\end{Remark}

We end with three more questions.

\begin{Question}
What effect do graph-theoretic operations like series extension 
have on the Waldschmidt constant and asymptotic resurgence of $I(M(G))$?
\end{Question}

\begin{Question}
If $I(G)$ is the edge ideal of a graph $G$, then $\widehat{\alpha}(I(G))$ is the \textit{fractional chromatic number} of $G$~\cite{BocciCooperGuardoHarbourneJanssenNagelSeceleanuVanTuylVu}.  Is there a correspondingly `nice' interpretation of $\widehat{\alpha}(I(M(G)))$ in terms of a graph-theoretic invariant?
\end{Question}

Following up on
\Cref{thm:AsymptoticResurgenceBoundsMatroid} and \Cref{rem:refinement} we ask the following question.
\begin{Question}\label{ques:asymptoticresbound}
If $I$ is a squarefree monomial ideal of codimension $c$ and $\omega$ is the largest degree of a generator of $I$, is it always true that
$
\widehat{\rho}(I)\le \dfrac{c\omega}{c+\omega-1}
$?
\end{Question}
We suspect the answer to \Cref{ques:asymptoticresbound} may be no, but we have not found a counterexample.

\section{Acknowledgments}
DiPasquale was partially supported by NSF grant DMS--2344588. Kumar was partially supported by an AMS-Simons Travel grant.  

All results, proofs, and mathematical content were produced without AI assistance.  The authors used Google Gemini to obtain a starting point for the code to compute asymptotic resurgence (see \Cref{rmk:asym-res-algo}).  The authors verified and improved upon this code, which is now posted under the research tab of DiPasquale's website (\href{midipasq.github.io}{midipasq.github.io}).

\bibliography{bibl}

@incollection {MG18,
    AUTHOR = {Lampa-Baczy\'nska, Magdalena and Malara, Grzegorz},
     TITLE = {On the containment hierarchy for simplicial ideals},
 BOOKTITLE = {Extended abstracts {F}ebruary 2016---positivity and
              valuations},
    SERIES = {Trends Math. Res. Perspect. CRM Barc.},
    VOLUME = {9},
     PAGES = {71--74},
 PUBLISHER = {Birkh\"auser/Springer, Cham},
      YEAR = {2018},
      ISBN = {978-3-030-00027-1; 978-3-030-00026-4},
   MRCLASS = {13F20},
  MRNUMBER = {3946256},
       DOI = {10.1007/978-3-030-00027-1\_11},
       URL = {https://doi.org/10.1007/978-3-030-00027-1_11},
}

@article {ELS01,
    AUTHOR = {Ein, L. and Lazarsfeld, R. and Smith, K. E.},
     TITLE = {Uniform bounds and symbolic powers on smooth varieties},
   JOURNAL = {Invent. Math.},
  FJOURNAL = {Inventiones Mathematicae},
    VOLUME = {144},
      YEAR = {2001},
    NUMBER = {2},
     PAGES = {241--252},
      ISSN = {0020-9910,1432-1297},
   MRCLASS = {13A10 (13H05 14Q20)},
  MRNUMBER = {1826369},
MRREVIEWER = {Irena\ Swanson},
       DOI = {10.1007/s002220100121},
       URL = {https://doi.org/10.1007/s002220100121},
}

@article {HH02,
    AUTHOR = {Hochster, M. and Huneke, C.},
     TITLE = {Comparison of symbolic and ordinary powers of ideals},
   JOURNAL = {Invent. Math.},
  FJOURNAL = {Inventiones Mathematicae},
    VOLUME = {147},
      YEAR = {2002},
    NUMBER = {2},
     PAGES = {349--369},
      ISSN = {0020-9910,1432-1297},
   MRCLASS = {13A10 (13H05)},
  MRNUMBER = {1881923},
MRREVIEWER = {Irena\ Swanson},
       DOI = {10.1007/s002220100176},
       URL = {https://doi.org/10.1007/s002220100176},
}

@article {MS17,
    AUTHOR = {Ma, Linquan and Schwede, Karl},
     TITLE = {Perfectoid multiplier/test ideals in regular rings and bounds
              on symbolic powers},
   JOURNAL = {Invent. Math.},
  FJOURNAL = {Inventiones Mathematicae},
    VOLUME = {214},
      YEAR = {2018},
    NUMBER = {2},
     PAGES = {913--955},
      ISSN = {0020-9910,1432-1297},
   MRCLASS = {13A35 (14F18)},
  MRNUMBER = {3867632},
MRREVIEWER = {Ana\ Bravo},
       DOI = {10.1007/s00222-018-0813-1},
       URL = {https://doi.org/10.1007/s00222-018-0813-1},
}

@article {DiPasquale-Drabkin-2021,
    AUTHOR = {DiPasquale, Michael and Drabkin, Ben},
     TITLE = {On resurgence via asymptotic resurgence},
   JOURNAL = {J. Algebra},
  FJOURNAL = {Journal of Algebra},
    VOLUME = {587},
      YEAR = {2021},
     PAGES = {64--84},
      ISSN = {0021-8693,1090-266X},
   MRCLASS = {13A30 (13A15 13F20)},
  MRNUMBER = {4301520},
MRREVIEWER = {Kriti\ Goel},
       DOI = {10.1016/j.jalgebra.2021.07.021},
       URL = {https://doi.org/10.1016/j.jalgebra.2021.07.021},
}

@article {Villarreal-2023,
    AUTHOR = {Villarreal, Rafael H.},
     TITLE = {A duality theorem for the ic-resurgence of edge ideals},
   JOURNAL = {European J. Combin.},
  FJOURNAL = {European Journal of Combinatorics},
    VOLUME = {109},
      YEAR = {2023},
     PAGES = {Paper No. 103656, 18},
      ISSN = {0195-6698,1095-9971},
   MRCLASS = {90C90 (05E40 13A70 52B12 90C05 90C46)},
  MRNUMBER = {4522421},
MRREVIEWER = {Mehrdad\ Nasernejad},
       DOI = {10.1016/j.ejc.2022.103656},
       URL = {https://doi.org/10.1016/j.ejc.2022.103656},
}

@book {Wel,
    AUTHOR = {Welsh, D. J. A.},
     TITLE = {Matroid theory},
    SERIES = {L. M. S. Monographs},
    VOLUME = {No. 8},
 PUBLISHER = {Academic Press [Harcourt Brace Jovanovich, Publishers],
              London-New York},
      YEAR = {1976},
     PAGES = {xi+433},
   MRCLASS = {05B35},
  MRNUMBER = {427112},
MRREVIEWER = {W.\ T.\ Tutte},
}

@article {JKM22,
    AUTHOR = {Jayanthan, A. V. and Kumar, Arvind and Mukundan, Vivek},
     TITLE = {On the resurgence and asymptotic resurgence of homogeneous
              ideals},
   JOURNAL = {Math. Z.},
  FJOURNAL = {Mathematische Zeitschrift},
    VOLUME = {302},
      YEAR = {2022},
    NUMBER = {4},
     PAGES = {2407--2434},
      ISSN = {0025-5874,1432-1823},
   MRCLASS = {13F55 (05E40 13A15 13F20)},
  MRNUMBER = {4509032},
MRREVIEWER = {Somayeh\ Bandari},
       DOI = {10.1007/s00209-022-03138-w},
       URL = {https://doi.org/10.1007/s00209-022-03138-w},
}

@article {CEHH17,
    AUTHOR = {Cooper, S. M. and Embree, R. J. D. and H\`a, T. H. and Hoefel, A. H.},
     TITLE = {Symbolic powers of monomial ideals},
   JOURNAL = {Proc. Edinb. Math. Soc. (2)},
  FJOURNAL = {Proceedings of the Edinburgh Mathematical Society. Series II},
    VOLUME = {60},
      YEAR = {2017},
    NUMBER = {1},
     PAGES = {39--55},
      ISSN = {0013-0915,1464-3839},
   MRCLASS = {13F20 (13A02 14N05)},
  MRNUMBER = {3589840},
MRREVIEWER = {Mike\ Janssen},
       DOI = {10.1017/S0013091516000110},
       URL = {https://doi.org/10.1017/S0013091516000110},
}

@article {DFMS19,
    AUTHOR = {DiPasquale, M. and Francisco, C. A. and Mermin, J. and Schweig, J.},
     TITLE = {Asymptotic resurgence via integral closures},
   JOURNAL = {Trans. Amer. Math. Soc.},
  FJOURNAL = {Transactions of the American Mathematical Society},
    VOLUME = {372},
      YEAR = {2019},
    NUMBER = {9},
     PAGES = {6655--6676},
      ISSN = {0002-9947,1088-6850},
   MRCLASS = {14C20 (13A18 13B22 13F20)},
  MRNUMBER = {4024534},
MRREVIEWER = {Carlos\ Galindo},
       DOI = {10.1090/tran/7835},
       URL = {https://doi.org/10.1090/tran/7835},
}

@article {BB83,
    AUTHOR = {Bollob\'as, B\'ela},
     TITLE = {Almost all regular graphs are {H}amiltonian},
   JOURNAL = {European J. Combin.},
  FJOURNAL = {European Journal of Combinatorics},
    VOLUME = {4},
      YEAR = {1983},
    NUMBER = {2},
     PAGES = {97--106},
      ISSN = {0195-6698,1095-9971},
   MRCLASS = {05C45},
  MRNUMBER = {705962},
MRREVIEWER = {Edward\ A.\ Bender},
       DOI = {10.1016/S0195-6698(83)80039-0},
       URL = {https://doi.org/10.1016/S0195-6698(83)80039-0},
}

@article{KM2026,
  author  = {Kumar, Arvind and Mukundan, Vivek},
  title   = {Symbolic powers of classical varieties},
  journal = {Israel Journal of Mathematics},
  year    = {2026},
  doi     = {10.1007/s11856-026-2931-6},
  url     = {https://doi.org/10.1007/s11856-026-2931-6},
  issn    = {1565-8511}
}

@book {Oxley2011,
    AUTHOR = {Oxley, J.},
     TITLE = {Matroid theory},
    SERIES = {Oxford Graduate Texts in Mathematics},
    VOLUME = {21},
   EDITION = {Second},
 PUBLISHER = {Oxford University Press, Oxford},
      YEAR = {2011},
     PAGES = {xiv+684},
      ISBN = {978-0-19-960339-8},
   MRCLASS = {05-01 (05B35 90C27)},
  MRNUMBER = {2849819},
MRREVIEWER = {Maruti\ M.\ Shikare},
       DOI = {10.1093/acprof:oso/9780198566946.001.0001},
       URL = {https://doi.org/10.1093/acprof:oso/9780198566946.001.0001},
}

@article {BH10,
    AUTHOR = {Bocci, Cristiano and Harbourne, Brian},
     TITLE = {Comparing powers and symbolic powers of ideals},
   JOURNAL = {J. Algebraic Geom.},
  FJOURNAL = {Journal of Algebraic Geometry},
    VOLUME = {19},
      YEAR = {2010},
    NUMBER = {3},
     PAGES = {399--417},
      ISSN = {1056-3911,1534-7486},
   MRCLASS = {13F20 (13A15)},
  MRNUMBER = {2629595},
MRREVIEWER = {Irena\ Swanson},
       DOI = {10.1090/S1056-3911-09-00530-X},
       URL = {https://doi.org/10.1090/S1056-3911-09-00530-X},
}

@article {GHV13,
    AUTHOR = {Guardo, Elena and Harbourne, Brian and Van Tuyl, Adam},
     TITLE = {Asymptotic resurgences for ideals of positive dimensional
              subschemes of projective space},
   JOURNAL = {Adv. Math.},
  FJOURNAL = {Advances in Mathematics},
    VOLUME = {246},
      YEAR = {2013},
     PAGES = {114--127},
      ISSN = {0001-8708,1090-2082},
   MRCLASS = {14C20 (13A15 13F20)},
  MRNUMBER = {3091802},
MRREVIEWER = {Ciro\ Ciliberto},
       DOI = {10.1016/j.aim.2013.05.027},
       URL = {https://doi.org/10.1016/j.aim.2013.05.027},
}

@article {BCSGHJNSVV2016,
    AUTHOR = {Bocci, Cristiano and Cooper, Susan and Guardo, Elena and
              Harbourne, Brian and Janssen, Mike and Nagel, Uwe and
              Seceleanu, Alexandra and Van Tuyl, Adam and Vu, Thanh},
     TITLE = {The {W}aldschmidt constant for squarefree monomial ideals},
   JOURNAL = {J. Algebraic Combin.},
  FJOURNAL = {Journal of Algebraic Combinatorics. An International Journal},
    VOLUME = {44},
      YEAR = {2016},
    NUMBER = {4},
     PAGES = {875--904},
      ISSN = {0925-9899,1572-9192},
   MRCLASS = {13F20 (13A02 13F55 14N05 14N20)},
  MRNUMBER = {3566223},
MRREVIEWER = {Christopher\ A.\ Francisco},
       DOI = {10.1007/s10801-016-0693-7},
       URL = {https://doi.org/10.1007/s10801-016-0693-7},
}

@article {BocciCooperGuardoHarbourneJanssenNagelSeceleanuVanTuylVu,
    AUTHOR = {Bocci, Cristiano and Cooper, Susan and Guardo, Elena and
              Harbourne, Brian and Janssen, Mike and Nagel, Uwe and
              Seceleanu, Alexandra and Van Tuyl, Adam and Vu, Thanh},
     TITLE = {The {W}aldschmidt constant for squarefree monomial ideals},
   JOURNAL = {J. Algebraic Combin.},
  FJOURNAL = {Journal of Algebraic Combinatorics. An International Journal},
    VOLUME = {44},
      YEAR = {2016},
    NUMBER = {4},
     PAGES = {875--904},
      ISSN = {0925-9899,1572-9192},
   MRCLASS = {13F20 (13A02 13F55 14N05 14N20)},
  MRNUMBER = {3566223},
MRREVIEWER = {Christopher\ A.\ Francisco},
       DOI = {10.1007/s10801-016-0693-7},
       URL = {https://doi-org.nmsu.idm.oclc.org/10.1007/s10801-016-0693-7},
}

@article {MNDW11,
    AUTHOR = {Mayhew, D. and Newman, M. and Welsh, D. and Whittle, G.},
     TITLE = {On the asymptotic proportion of connected matroids},
   JOURNAL = {European J. Combin.},
  FJOURNAL = {European Journal of Combinatorics},
    VOLUME = {32},
      YEAR = {2011},
    NUMBER = {6},
     PAGES = {882--890},
      ISSN = {0195-6698,1095-9971},
   MRCLASS = {05B35 (05A16 05C80)},
  MRNUMBER = {2821559},
MRREVIEWER = {Vania\ D.\ Mascioni},
       DOI = {10.1016/j.ejc.2011.01.016},
       URL = {https://doi.org/10.1016/j.ejc.2011.01.016},
}

@article {GHMN17,
    AUTHOR = {Geramita, A. V. and Harbourne, B. and Migliore, J. and Nagel, U.},
     TITLE = {Matroid configurations and symbolic powers of their ideals},
   JOURNAL = {Trans. Amer. Math. Soc.},
  FJOURNAL = {Transactions of the American mathematical Society},
    VOLUME = {369},
      YEAR = {2017},
     PAGES = {7049--7066},
}

@misc{DFK26,
    AUTHOR = {{DiPasquale}, Michael and {Fouli}, Louiza and {Kumar}, Arvind},
        title = "{Asymptotic Resurgence of Facet and Stanley-Reisner ideals of Matroids}",
         howpublished = {Available at \url{https://arxiv.org/abs/2607.20892}},
}

@article {HT-2019,
    AUTHOR = {H\`a, Huy T\`ai and Trung, Ngo Viet},
     TITLE = {Membership criteria and containments of powers of monomial
              ideals},
   JOURNAL = {Acta Math. Vietnam.},
  FJOURNAL = {Acta Mathematica Vietnamica},
    VOLUME = {44},
      YEAR = {2019},
    NUMBER = {1},
     PAGES = {117--139},
      ISSN = {0251-4184,2315-4144},
   MRCLASS = {13P10 (05C65 13C05 13F20 90C27)},
  MRNUMBER = {3935294},
MRREVIEWER = {Indranath\ Sengupta},
       DOI = {10.1007/s40306-018-00325-y},
       URL = {https://doi-org.nmsu.idm.oclc.org/10.1007/s40306-018-00325-y},
}

@manual{sagemath,
  Key          = {SageMath},
  Author       = {{The Sage Developers}},
  Title        = {{S}ageMath, the {S}age {M}athematics {S}oftware {S}ystem ({V}ersion 10.8)},
  note         = {{\tt https://www.sagemath.org}},
  Year         = {2025},
}

@incollection {DDGHN2018,
    AUTHOR = {Dao, Hailong and De Stefani, Alessandro and Grifo, Elo\'isa
              and Huneke, Craig and N\'u\~nez-Betancourt, Luis},
     TITLE = {Symbolic powers of ideals},
 BOOKTITLE = {Singularities and foliations. geometry, topology and
              applications},
    SERIES = {Springer Proc. Math. Stat.},
    VOLUME = {222},
     PAGES = {387--432},
 PUBLISHER = {Springer, Cham},
      YEAR = {2018},
      ISBN = {978-3-319-73639-6; 978-3-319-73638-9},
   MRCLASS = {13F20 (14N05)},
  MRNUMBER = {3779569},
MRREVIEWER = {Cleto\ B.\ Miranda-Neto},
       DOI = {10.1007/978-3-319-73639-6\_13},
       URL = {https://doi-org.nmsu.idm.oclc.org/10.1007/978-3-319-73639-6_13},
}

@article {BH2010,
    AUTHOR = {Bocci, Cristiano and Harbourne, Brian},
     TITLE = {The resurgence of ideals of points and the containment
              problem},
   JOURNAL = {Proc. Amer. Math. Soc.},
  FJOURNAL = {Proceedings of the American Mathematical Society},
    VOLUME = {138},
      YEAR = {2010},
    NUMBER = {4},
     PAGES = {1175--1190},
      ISSN = {0002-9939,1088-6826},
   MRCLASS = {14C20 (14N05)},
  MRNUMBER = {2578512},
       DOI = {10.1090/S0002-9939-09-10108-9},
       URL = {https://doi-org.nmsu.idm.oclc.org/10.1090/S0002-9939-09-10108-9},
}

@article {BDHHSS2019,
    AUTHOR = {Bauer, Thomas and Di Rocco, Sandra and Harbourne, Brian and
              Huizenga, Jack and Seceleanu, Alexandra and Szemberg, Tomasz},
     TITLE = {Negative curves on symmetric blowups of the projective plane,
              resurgences, and {W}aldschmidt constants},
   JOURNAL = {Int. Math. Res. Not. IMRN},
  FJOURNAL = {International Mathematics Research Notices. IMRN},
      YEAR = {2019},
    NUMBER = {24},
     PAGES = {7459--7514},
      ISSN = {1073-7928,1687-0247},
   MRCLASS = {14C20 (14N05 14N20)},
  MRNUMBER = {4043827},
MRREVIEWER = {Carlos\ Galindo},
       DOI = {10.1093/imrn/rnx329},
       URL = {https://doi-org.nmsu.idm.oclc.org/10.1093/imrn/rnx329},
}

@article {DHNSST2015,
    AUTHOR = {Dumnicki, M. and Harbourne, B. and Nagel, U. and Seceleanu, A.
              and Szemberg, T. and Tutaj-Gasi\'nska, H.},
     TITLE = {Resurgences for ideals of special point configurations in
              {$\bold{P}^N$} coming from hyperplane arrangements},
   JOURNAL = {J. Algebra},
  FJOURNAL = {Journal of Algebra},
    VOLUME = {443},
      YEAR = {2015},
     PAGES = {383--394},
      ISSN = {0021-8693,1090-266X},
   MRCLASS = {13F20 (13A02 14N05)},
  MRNUMBER = {3400406},
       DOI = {10.1016/j.jalgebra.2015.07.022},
       URL = {https://doi-org.nmsu.idm.oclc.org/10.1016/j.jalgebra.2015.07.022},
}
\bibliographystyle{plain}

\end{document}